\documentclass{article}

\usepackage[margin=1in]{geometry}
\usepackage{graphicx}
\usepackage{multirow}
\usepackage{amsmath,amssymb,amsfonts}
\usepackage{amsthm}
\usepackage{mathrsfs}
\usepackage[title]{appendix}
\usepackage{xcolor}
\usepackage{textcomp}
\usepackage{manyfoot}
\usepackage{booktabs}
\usepackage{array}
\usepackage{algorithm}
\usepackage{algorithmicx}
\usepackage{algpseudocode}
\usepackage{listings}
\usepackage{hyperref}
\usepackage{cleveref}
\usepackage{tikz}
\usepackage{makecell}
\usepackage{authblk}

\DeclareMathOperator{\ex}{ext}
\DeclareMathOperator{\NB}{NB}

\newtheorem{theorem}{Theorem}
\newtheorem{proposition}[theorem]{Proposition}
\newtheorem{lem}[theorem]{Lemma}
\newtheorem{cor}[theorem]{Corollary}

\newcommand{\RNA}{\textsc{rna}}
\newcommand{\rna}{\textsc{rna}}
\newcommand{\SRP}{\textsc{SRP}}
\newcommand{\RNaseP}{\textsc{RN}ase\textsc{P}}

\newcommand{\A}{\textsc{a}}
\newcommand{\C}{\textsc{c}}
\newcommand{\G}{\textsc{g}}
\newcommand{\U}{\textsc{u}}

\newcommand{\DEG}{{\tt DEG}}
\newcommand{\UNP}{{\tt UNP}}
\newcommand{\ETE}{{\tt ETE}}
\newcommand{\COV}{{\tt CHN}}
\newcommand{\LEN}{{\tt LEN}}
\newcommand{\FirstHelix}{{\tt HEL}}
\newcommand{\HEL}{{\tt HEL}}
\newcommand{\STM}{{\tt STM}}

\newcommand{\keywords}[1]{%
  \par\medskip\noindent\textbf{Keywords:} #1\par\medskip
}

\title{Making ends meet or just meeting at the ends?  Assessing end-to-end distance in folded RNA sequences and other branched structures}

\author[1]{Torin Greenwood}
\author[2]{Christine Heitsch}

\affil[1]{Department of Mathematics, North Dakota State University, Fargo, ND 58102, USA}
\affil[2]{School of Mathematics, Georgia Institute of Technology, Atlanta, GA 30332, USA}

\date{}

\usepackage[backend=biber,style=numeric]{biblatex}
\begin{document}

\maketitle

\begin{abstract}
Researchers have repeatedly found that the ends of an \textsc{rna} sequence are significantly closer than expected for a random linear chain.  However, we prove that the ends of a branched structure are almost certainly close.  Our results are obtained via combinatorial branching models of increasing complexity using tools from multivariate analytic combinatorics.  We completely characterize parameters tracking end-to-end distance, including means and variances.  Then, we compare to existing datasets of known \rna\ structures, as well as the minimum free-energy structures of randomized shuffles.  We find that the shuffled structures resemble our theoretical distributions while the known \rna\ structures have similar parameter values but are more concentrated.
\end{abstract}

\keywords{Dyck, Motzkin, Pfold stochastic context-free grammar, analytic combinatorics, RNA, secondary structures}

\textsc{Rna} molecules are not merely passive carriers of genetic information but play direct roles in many of the cell’s essential processes. The function of an \RNA\ molecule depends critically on the three-dimensional conformation induced by intramolecular base pairing.  These interactions generate characteristic structural motifs that govern stability, specificity, and interaction with proteins and other nucleic acids.  Identifying the three-dimensional structure can be technically challenging, time-consuming, and often infeasible.  Instead, it is more tractable to identify a two-dimensional approximation called the \emph{secondary structure}, which still captures essential functional information.  Secondary structures are a special type of branching structure, with several interpretations defined below.  

Recent papers in \rna\ folding have concluded that the ends of \rna\ molecules often end up close to one another.  Here, we ask: how common is this for a random branched structure?  To address this problem with mathematical rigor, we consider three increasingly complex distributions over branched structures and five different parameters measuring \emph{end-proximity}.  By using an analysis pipeline from analytic combinatorics, we find complete descriptions of the limiting distributions of each parameter.  We find uniformly small means and variances for each parameter, indicating that the endpoints of branched structures are ``necessarily'' close, rather than this being a feature unique to thermodynamically favorable \rna\ secondary structures~\cite{Yoffe:2010}.

Our contribution to the increasing body of work on end-proximity has three main advancements: first, we refine the analytic combinatorics pipeline to give limiting probability generating functions (PGFs) describing the distributions of each parameter.  Previous work focused on the mean of each parameter, but our analysis allows us to identify variances, which illustrate a key difference between random branched structures and known \RNA\ families.  Next, we are able to analyze the joint distribution of multiple measures of end-proximity simultaneously.  Besides being of interest in its own right, the joint distribution unlocks our ability to analyze more refined definitions of end-to-end distance~\cite{Aalberts:2010}.  Finally, we can study the distribution of branched structures output by the Pfold grammar~\cite{Knudsen:2003}.  This grammar has been shown to predict \RNA\ structures with high accuracy.

In \Cref{sec:Overview}, we give an initial outline of the branching models considered as well as our main results.  Then, in \Cref{sec:Analytic}, we describe the primary tool we develop in analytic combinatorics to extract distributions.  Following this is our mathematical analysis of each characteristic of end-proximity in \Cref{sec:ModelAnalysis}.  To investigate the pairing propensity of the ends of a sequence, we examine the lengths of the first helix and first stem of a structure in \Cref{sec:Helices}.  In \Cref{sec:Data}, we compare our theoretical distributions to known \RNA\ structures from curated datasets. Finally, in \Cref{sec:Conclusion}, we indicate implications to combinatorics and directions for future work.

\section{The distributions of end-proximity} \label{sec:Overview}

The one-dimensional structure of an \RNA\ molecule, known as its sequence,
is an oriented linear polymer composed of four nucleotides: 
\A(denine), \C(ytosine), \G(uanine), and \U(racil).
The links of this chain are formed by covalent bonds which connect the 
3'-hydroxyl group of one sugar to the 5'-phosphate group of the next 
nucleotide.
This creates a strong 5'-3' linear polymer with an alternating 
sugar-phosphate backbone and nitrogenous bases available for pairing.

The intra-sequence base pairing of an \RNA\ molecule creates  
a two-dimensional branched structure, known as its (pseudoknot-free)
secondary structure\footnote{
Two base pairs $(i,j)$ and $(k,l)$ with $i < k$ form a pseudoknot 
if $i < k < j < l$.
Although such crossed pairings are known to exist, 
and to be functionally important,
their prediction remains a challenging aspect of \RNA\ pairwise interactions.
Hence, they are typically considered part of the tertiary interactions
that, along with base triples, coaxial stacking, etc., ultimately 
determine the three dimensional molecular conformation.
}.
Hydrogen bonds create a bridge between complementary nucleotides;
the familiar Watson-Crick-Franklin pairings of \C\G/\G\C\ and \A\U/\U\A\ 
along with the wobble \G\U/\U\G\ one are known as the canonical 
base pairs\footnote{
Although non-canonical pairings are known to exist, they are 
treated as a small internal loop under the most common secondary 
structure prediction methods which are based on the nearest
neighbor thermodynamic model (NNTM)~\cite{Turner:2009}.
}.
An \RNA\ secondary structure decomposes into runs of consecutive 
base pairs, known as helices, and single-stranded regions called 
loops.
Our focus here is the \emph{exterior loop} which is the 
set of nucleotides (paired and unpaired) not contained within any 
base pair.

Specifically, we consider the ``end-to-end'' (\ETE) distance in \RNA\ 
molecules as the shortest path from the 5' to 3' ends.
In a secondary structure, this relates to one less than
the number of nucleotides in the exterior loop.
However, to compare with experimental measurements, such as 
force microscopy~\cite{Gerland:2001} or 
FRET~\cite{Leija-Martinez:2014,Lai:2018}, 
this discrete count must be 
translated into an estimate of the physical distance.
Hence, it is important to distinguish between the covalent links 
in the sugar-phosphate backbone and the hydrogen bridges created by
base pairing as the latter are $\sim2.5$ times longer than the former.

Our method allows us to explicitly consider the two different
kinds of steps, and to fully characterize their individual and 
joint distributions.
In particular, we can compute arbitrary moments and hence
not only give the asymptotic means, as others have done,
but also their variances.

\subsection{Overview of main results}

\label{sec:Models}
\begin{figure}
    \begin{center}
        \includegraphics[width=\textwidth]{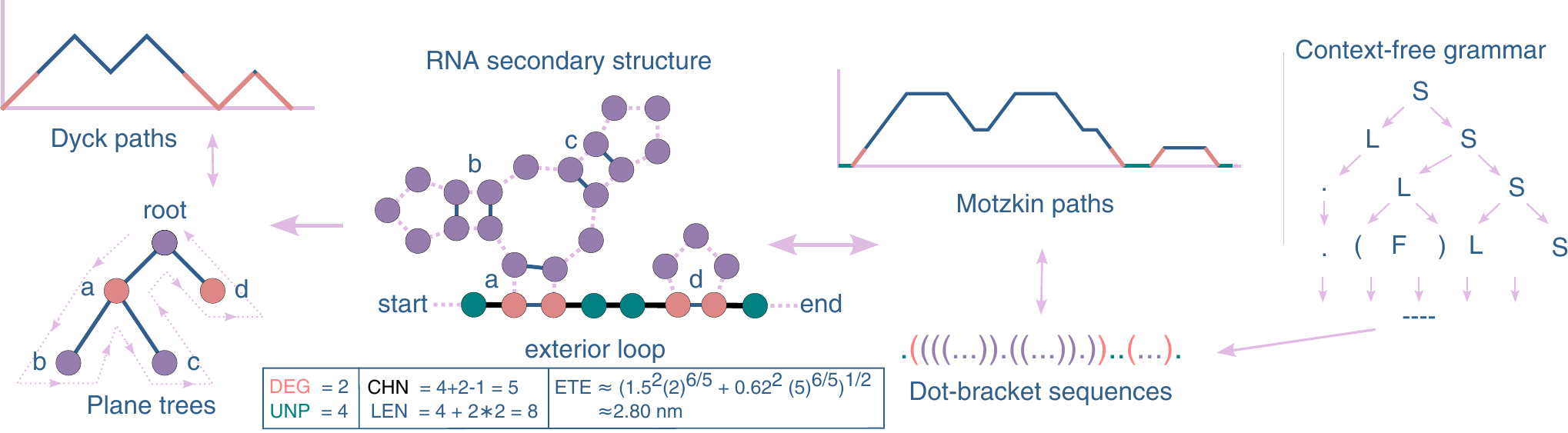}
    \end{center}
    \caption{%
An \RNA\ secondary structure, as in the radial diagram (center),
consists of runs of stacked base pairs, called helices, separated by
singled-stranded regions called loops.
Since no pairings cross, the arrangement of helices can be 
abstracted to a plane tree, which is equivalent 
to a Dyck path under a pre-order walk.
A secondary structure is written as a dot-bracket sequence (equivalently a Motzkin path) by reading from left (5') to right (3')
an unpaired base as a horizontal step (dot), 
the 5' side of a base pair as an up step (left parenthesis) and 
the 3' side as a down one (right parenthesis).
A stochastic context-free grammar, such as the one in \Cref{sec:Pfold},
generates dot-bracket sequences
under a weighting determined by the probability parameters.
In this example, 
there are 8 nucleotides in the exterior loop, with 2 base pairs and 
4 unpaired nucleotides which gives an \ETE\ distance estimate of 2.80 nm.
    }
    \label{fig:RNAReps}
\end{figure}
 
Consider a strand of $n$ ordered nodes in a line.  An \textsc{rna} secondary structure is a partial pairing of these nodes with arcs above the nodes such that no two arcs cross.  Equivalently, \RNA\ structures can be represented as dot-bracket sequences (\Cref{fig:RNAReps}), meaning that they form a \emph{language} $\mathcal{L}$ on the alphabet $\{(, ), .\}$.  We consider distributions on $\mathcal{L}$ of varying complexity.  When examining increasingly abstract distributions, we find stability in the behavior of end-proximity.  This illustrates that end-proximity is a feature of branching structures globally, and not a biological novelty of \RNA.

As the most rigorous distribution considered, the Pfold
grammar~\cite{Knudsen:2003} has been used to predict 
\RNA\ secondary structures with high accuracy.  
This stochastic context-free grammar (defined in \Cref{sec:Pfold}) 
has only three production rules, each with one probability parameter. 
Nonetheless, it captures biologically meaningful information, such as 
the stacking of base pairs and the spacing of unpaired nucleotides, in a more mathematically
tractable way than the standard thermodynamic recursions
used in most secondary structure prediction methods.

We obtain a simpler distribution on $\mathcal{L}$ if we
weight all dot-bracket sequences (or Motzkin paths) equally.
One further relaxation is if we omit unpaired bases and consider only runs of paired bases, which leads to the uniform distribution on balanced parenthesizations, equivalent to Dyck paths.  While both distributions have been used elsewhere to analyze \RNA, the ubiquity of Catalan objects like Dyck paths implies that any characteristics of end-proximity from this model are not due to biological specificities.

We consider four different end-proximity characteristics, 
summarized in Table~\ref{table:Parameters}, of an exterior loop.
The number of base pairs is the degree, \DEG,
equivalently the root degree of a plane tree.  
It is also the number of mountains in a Motzkin path while
the number of unpaired bases \UNP\ is the 
number of horizontal steps along the horizon of the path.
The number of covalent bonds \COV\ along the 5' to 3' shortest path is 
related as $\COV = \DEG + \UNP - 1$ whereas the total number of nucleotides,
denoted \LEN, is $2\DEG + \UNP$.
Hence $\LEN - 1 = \DEG + \COV$.

We also consider an approximation of the physical end-to-end distance \ETE\
based on a two-scale self-avoiding freely jointed chain 
estimate~\cite{Aalberts:2010, Lai:2018} 
from polymer theory given by 
$(b^2 \cdot \DEG^{6/5} + c^2 \cdot \COV^{6/5})^{1/2}$
with a distance of $b = 1.5$ nm for hydrogen bridges 
and $c = 6.2$ nm for covalent links.
We note that this is strictly larger than the root-mean-square estimate
of $a \sqrt{\LEN - 1}$ originally considered~\cite{Yoffe:2010}
based on the path length with an average step size of $a = 0.75$ nm.

\begin{table}[h]
\centering
\begin{tabular}{cl}
\toprule
Parameter & Definition\\
\cmidrule(l){1-2}
\DEG & Degree of the exterior loop; number of hydrogen bridges \\
\UNP & Number of unpaired bases in exterior loop\\
\COV & Number of covalent bonds in exterior loop; $\DEG + \UNP - 1$\\
\LEN & Number of nucleotides in exterior loop; $2\DEG + \UNP$\\
\ETE & Approximate end-to-end distance in nanometers; $\sqrt{1.5^2 \cdot \DEG^{6/5} + 0.62^2 \cdot \COV^{6/5}}$\\
\bottomrule
\end{tabular}
\caption{Parameters characterizing end-proximity of the exterior loop.}
\label{table:Parameters}
\end{table}

\begin{proposition}
    As the length of a dot-bracket sequence approaches infinity, the limiting distributions of end-proximity are as described in \Cref{table:MainProp}.
\end{proposition}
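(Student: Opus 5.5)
We will treat the three models (Dyck, Motzkin, Pfold) in parallel, in each case via a multivariate generating function that marks the exterior-loop statistics. The key observation is that a secondary structure decomposes uniquely along its exterior loop. It is a sequence of items, each either an unpaired base (marked by $u$) or a closed component $(S)$ with $S$ an arbitrary structure (marked by $w$ for the hydrogen bridge). Let $F(z)$ be the ordinary generating function of all structures in the model. Then in the Motzkin case
\[
  G(z,w,u) = \frac{1}{1 - u z - w z^2 F(z)},
\]
and the Dyck case is the specialization with no unpaired bases (so $u$ does not appear). For Pfold, the same decomposition holds at the level of the grammar's nonterminals. Each production carries its probability parameter, so $F$ is replaced by the weighted series of the nonterminal generating a closed helix. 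Exterior-loop items are likewise weighted by the probabilities of the productions that emit an unpaired base versus a pair.

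In each case $F$ satisfies a quadratic equation with a dominant square-root singularity at $\rho$: $\rho=1/4$ for Dyck, $\rho=1/3$ for Motzkin, and for Pfold $\rho$ is an explicit algebraic function of the three parameters.

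The next step is to show the regime is subcritical. At $z=\rho$ with $u=w=1$, the quantity $uz + wz^2F(z)$ is strictly less than $1$ (e.g.\ for Dyck, $z^2F(z)=\rho^2\cdot 2 \cdot$ appropriately gives $1/2$ in the standard normalization). Hence the denominator of $G$ does not vanish near $\rho$, and the dominant singularity of $G(z,w,u)$ is inherited from $F$. Writing $F(z) = F(\rho) - C\sqrt{1-z/\rho} + O(1-z/\rho)$ and applying the transfer theorem coefficientwise (the tool of \Cref{sec:Analytic}), I obtain
\[
  \frac{[z^n]G(z,w,u)}{[z^n]G(z,1,1)} \longrightarrow
  P(w,u) := \frac{w\,(1-\rho u-\rho^2 F(\rho))^2}{(1-\rho u - w\rho^2F(\rho))^2}.
\]
The derivation: differentiate $1/(1-\rho u - w\rho^2 F)$ with respect to $F$ to get the $\sqrt{\cdot}$ coefficient, then normalize. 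The limit is uniform for $(w,u)$ in a neighbourhood of $(1,1)$, which gives convergence in distribution and of all moments.

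$P(w,u)$ is the limiting joint PGF of $(\DEG,\UNP)$. It is explicitly a product/composition of geometric-type laws, so the means and variances in \Cref{table:MainProp} follow by differentiating at $(1,1)$. The remaining statistics are linear images of $(\DEG,\UNP)$:
\begin{itemize}
  \item $\COV$ has PGF $t^{-1}P(t,t)$;
  \item $\LEN$ has PGF $P(t^2,t)$.
\end{itemize}
For $\ETE$, which is a nonlinear function of $(\DEG,\COV)$, I will compute its limiting law from the joint PGF by summing $\sqrt{b^2 d^{6/5}+c^2 k^{6/5}}$ against the extracted joint probabilities. Exponential tails of the geometric-type limit justify computing moments numerically and passing to the limit by uniform integrability.

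The main obstacle is the Pfold case. There the grammar's system is not a single quadratic, so I must solve it, or apply the Drmota--Lalley--Woods theorem, to confirm that the system is irreducible and aperiodic with a square-root singularity. I also need to verify the subcriticality inequality $1-\rho u-\rho^2F(\rho)>0$ as an explicit inequality in the probability parameters; this is the step I would check first, symbolically, for the published parameter values.
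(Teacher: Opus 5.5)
Your proposal leaves out part of the statement. Table~\ref{table:MainProp} also contains the \HEL\ rows (Dyck, Motzkin, Pfold), and you never treat them. \HEL\ is not a function of $(\DEG,\UNP)$, so no specialization of $P(w,u)$ produces it. A separate marked decomposition is required, which the paper supplies with the auxiliary series $H$ in Proposition~\ref{prop:DyckMotzkinDepth} and the tagged symbols $S_f,F_f$ in Proposition~\ref{prop:PfoldHelix}.

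Your composition argument closes this gap easily. For Motzkin paths, split off the leading dots and write the first mountain as $k\ge 1$ nested pairs around a path not itself of the form $\nearrow M\searrow$; such paths have series $(1-z^2)M(z)$. The full series is then $\frac{1}{1-z}\bigl(1+\frac{yz^2}{1-yz^2}(1-z^2)M(z)^2\bigr)$. The $y$-dependence is an analytic factor multiplying the square-root part, so the limit is $y(1-\rho^2)/(1-y\rho^2)$, i.e.\ $1+\NB(1,8/9)$. With $z$ marking pairs, the same computation gives $3y/(4-y)$ for Dyck paths. For Pfold, the only $y$-dependence beyond an overall factor $y$ is $F_f=q_3LS/(1-p_3yz^2)$, which gives $1+\NB(1,1-p_3\rho^2)$ since $p_3\rho^2<1$.

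For \DEG, \UNP, \COV, \LEN\ and \ETE, your route is a valid and more transparent variant of the paper's. The paper rationalizes to $(-b\pm g\sqrt{c})/(2a)$ and evaluates $g/a$ at $\rho$ via Proposition~\ref{prop:GF}. You instead differentiate a subcritical outer sequence with respect to the inner square-root series, which explains the squared denominator and hence the recurring $\NB(2,\cdot)$. Three things need fixing:
\begin{enumerate}
\item Your displayed $P(w,u)$ is misnormalized. The numerator must be $w(1-\rho-\rho^2F(\rho))^2$; as written, $P(1,u)\equiv 1$, so \UNP\ would be degenerate at $0$.
\item The denominator must be shown non-vanishing on all of $|z|\le\rho$, not just near $\rho$. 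Positivity of the coefficients gives this.
\item For Pfold, the exterior loop is $S_{\ex}=q_1L_{\ex}/(1-p_1L_{\ex})$ with $L_{\ex}=p_2wz^2F+q_2uz$, and no Drmota--Lalley--Woods argument is needed. The quantity $x=p_1L$ satisfies a single quadratic with discriminant $p_1^2(1-p_3z^2)R(z)$, whose double root at $\rho$ is $x=2\delta/(1+\delta)$. This identity plays the role of the paper's reduction of $a$ modulo $R$. It gives $p_1p_2\rho^2F(\rho)=\delta(1-\delta)/(1+\delta)$, turns your limit into the stated bivariate PGF, and reduces subcriticality to $\delta<1$.
\end{enumerate}
When you differentiate, you will get $\mathrm{Var}(\COV)=4\delta(1+\delta)/(1-\delta)^2$ for Pfold. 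This matches the tabulated $111.21$, so the coefficient $3$ in the symbolic table entry is a misprint.
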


\begin{table}[h]
\centering
\renewcommand{\arraystretch}{1.3}
\setlength{\tabcolsep}{8pt}
\begin{tabular}{|l|c|c|c|}
\hline
\textbf{Characteristic} & \textbf{Distribution or PGF} & \textbf{Mean} & \textbf{Variance} \\
\hline
\DEG\ Dyck paths & \( 1 + \NB(2, \tfrac{1}{2}) \) & 3 & 4 \\
\hline
\DEG\ Motzkin paths & \( 1 + \NB(2, \tfrac{1}{2}) \) & 3 & 4 \\
\hline
\DEG\ Pfold output & \( 1 + \NB(2, (1 + \delta)^{-1}) \) & \( 1 + 2\delta \) & \( 2\delta(1 + \delta) \) \\
\hline
\DEG\ Pfold default $\delta$ & \( 1 + \NB(2, 0.56) \) & 2.55 & 2.76 \\
\hline\hline
\UNP\ Motzkin paths & $\NB(2, 1/2)$ & 2 & 4\\
\hline
\UNP\ Pfold output & $\NB(2, \tfrac{1 - \delta}{1 + \delta^2})$ & $2\tfrac{\delta(1 + \delta)}{1 - \delta}$ & $\tfrac{2\delta(1+\delta)(1+\delta^2)}{(1-\delta)^2}$\\
\hline
\UNP\ Pfold default $\delta$ & $\NB(2, 0.14)$ & 12.39 & 89.19\\
\hline\hline
\COV\ Motzkin paths & $ \NB(2, \tfrac{1}{3}) $ & 4 & 12 \\
\hline
\COV\ Pfold output & \( \NB\!\left(2, \tfrac{1-\delta}{1 + \delta}\right) \) & \( \tfrac{4\delta}{1 - \delta} \) & \( \tfrac{3\delta(1 + \delta)}{(1 - \delta)^2} \) \\
\hline
\COV\ Pfold default $\delta$ & \( \NB\!\left(2, 0.13\right) \) & 13.95 & 111.21 \\
\hline\hline
\texttt{LEN} Motzkin paths & \rule{0pt}{3.2ex}\( \left( \tfrac{u}{u^2 + u - 3}\right)^2 \) & 8 & 28 \\
\hline
\texttt{LEN} Pfold output & \rule{0pt}{3.2ex}\( \left( \tfrac{(1-\delta)u}{(1 + \delta) - \delta(1 + \delta)u - \delta(1-\delta)u^2}\right)^2 \) & \( 2 + 2\delta + \tfrac{4\delta}{1-\delta} \) & \( \tfrac{2\delta(\delta^3 - 3\delta^2 + \delta + 5)}{(1-\delta)^2} \) \\
\hline
\texttt{LEN} Pfold default $\delta$ & \rule{0pt}{3.2ex}\( \left( \tfrac{0.05u^2}{1.78 - 1.38u - 0.17u^2}\right) \) & 17.50 & 138.76 \\
\hline
\hline
\ETE\ Dyck paths & See \Cref{cor:DyckETE} & 2.89 & 1.42\\
\hline
\ETE\ Motzkin paths & See \Cref{cor:Motzkin} & 3.08 & 1.56\\
\hline
\ETE\ Pfold default $\delta$ & See Proposition~\ref{prop:Pfold} & 3.83 & 2.23\\
\hline
\hline
\HEL\ Dyck paths & $1 + \NB(1, 3/4)$ & $4/3$ & $4/9$\\
\hline
\HEL\ Motzkin paths & $1 + \NB(1, 8/9)$ & $9/8$ & $9/64$\\
\hline
\HEL\ Pfold output & \rule{0pt}{3.0ex}\rule[-1.7ex]{0pt}{0pt}\(1 + \NB(1, 1 - \rho^2p_3)\) &  \(\tfrac{1}{1 - \rho^2 p_3}\) & \(\tfrac{\rho^2 p_3}{(1 - \rho^2 p_3)^2}\)\\
\hline
\HEL\ Pfold default $\delta$ & $1 + \NB(1, 0.21)$ & 4.71 & 17.51\\
\hline
\end{tabular}
\caption{Limiting distributions for the five end-proximity characteristics
from Table~\ref{table:Parameters} under the different 
secondary structure models considered.
The Pfold distribution depends on $\delta$ which is determined by 
the grammar probability parameters; see Proposition~\ref{prop:Pfold}.}
\label{table:MainProp}
\end{table}

The first observation is the recurrence of a negative binomial 
distribution $\NB(r, p)$.
This is often described in terms of coin flipping, where the probability
of one head is $p$ and $\NB(r, p)$ models the number of tails (i.e.\ failures)
obtained before $r$ heads.
Its probability generating function (PGF) is $h(u) := [p/(1 - (1-p)u)]^r$ which 
has mean $r(1-p)/p$ with variance $r(1-p)/p^2$. 
The coefficient of $u^k$ in the Taylor series expansion (denoted $[u^k] h(u)$) 
is the probability that $k$ tails were flipped before obtaining $r$ heads.  
With $r = 2$ and a fair coin, the discrete distribution begins as 
$(0.25, 0.25, 0.1875, 0.125, 0.078125, \ldots)$.
In contrast to a Gaussian distribution,
89\% is within one standard deviation of the mean.
Hence, we expect a random Dyck path to have degree 1 or 2 half 
of the time. 

When considering the degree of the exterior loop, the distribution in 
Dyck and Motzkin paths is the same.
This can be explained by projecting Motzkin paths down to Dyck paths by removing all horizontal steps.  When the size of the Motzkin path and Dyck path are known in advance, the projection is uniform, meaning that any Dyck path is the projection for the same number of Motzkin paths.  Another instance of this phenomenon appears in Proposition~\ref{prop:DyckMotzkinDepth} below.

The degree under Pfold is distributed in the same way as the Dyck and Motzkin paths,
except that the probability of a success now depends on $\delta$.
Under the default parameters this increases slightly, shifting more
probability to lower degrees.
For instance, the probability that $\DEG = 1$ or $2$ increases from 0.5
under the uniform distribution to 0.5896 with all subsequent degrees
being lower.
This is reflected in both a lower mean and lower variance, and also
shifts $1 \leq \DEG \leq 5$ from 0.8906 to 0.9373.

In contrast, the probability of a `failure' (i.e.\ adding a base) 
increases substantially under Pfold when considering the unpaired bases 
whereas it is the same as a base pair under the uniform weighting.
As a consequence, this shifts the balance from paired to unpaired
in determining the \LEN\ distribution.

However, the \ETE\ approximation is largely determined by \DEG.
This is apparent in the small difference between the Dyck and Motzkin
statistics.
Moreover, although the Pfold \LEN\ average is more than twice that
of the Motzkin paths, with a substantially higher variance,
the difference in \ETE\ means is $< 1$ nm as is the variance.
Consequently, under Pfold, the most biologically realistic model considered,
the theory predicts that a random sequence which folds into a 
branched structure via paired positions would typically have an \ETE\ 
distance of about 4 nm with a standard deviation of 1.5 nm.

\subsection{Comparison with related modeling results}

Our primary goal is to provide a theoretical context for recent 
experimental results on messenger \RNA\ (m\RNA) and long noncoding \RNA\ 
(lnc\RNA) molecules~\cite{Lai:2018}.
They found that average \ETE\ distance,  
as measured by F\"orster resonance energy transfer (FRET) experiments,
was always in the range of 5--7 nm for the 10 sequences considered.
As we will discuss later,
this is not inconsistent with our models, and well below the 
expectation for unpaired polymers.

The fact that experimental results consistently indicate that
the ends of long \RNA\ molecules are ``close'' always attracts 
attention.
It is viewed as ``counterintuitive,''
``surprising,'' and even ``rather remarkable''~\cite{Ermolenko:2020}
when compared to the vanishingly small probability that the ends of an
arbitrary linear chain are near each other~\cite{Yoffe:2010}.
However, when the pairing of the chain is taken into account, 
the expected distance drops dramatically.

The first paper to explicitly consider the \ETE\ question in this 
form~\cite{Yoffe:2010} gave a theoretical argument for the end-proximity
of a randomly self-paired polymer.
Their formulation depended on the percentage
of paired nucleotides and the average helix length, which were estimated
from computational minimum free energy (MFE) predictions 
to be $\sim60\%$ and $\sim4$ respectively.
They also considered $\LEN - 1$, and calculated that it should be 
$\sim12$. 
Under a root-mean-square (RMS) \ETE\ distance estimate of $a \sqrt{\LEN - 1}$
with an estimated average step size of $a = 3/4$ nm, they concluded that 
the expected \ETE\ was 3 nm.
This is almost exactly what our simpler Dyck and Motzkin path models predict
under the two-scale distance estimate, and what our Pfold model predicts
under theirs.

Subsequently~\cite{Clote:2011}, the $\LEN - 1$ problem was considered 
based on the standard recursive formulation for \RNA\ secondary structures.

Using generating functions and analytic combinatorics, as is done here,
they concluded that the mean should be lower,
i.e.\ about 6 (depending on parameters; see below).
Using the largest mean exterior loop length reported in the 
average step distance estimate yields an \ETE\ of 2.11 nm. 
 
Although not formulated in this way, prior work~\cite{Gerland:2001} 
had recursively computed the size of the exterior loop to model 
force microscopy experiments.
They give an explicit physical distance estimate, but as a 
function of the forces pulling on a specific \RNA\ sequence, so it 
is not comparable with our results.
Others~\cite{Hofacker:1998}, as part of a comprehensive theoretical 
analysis of motifs in \RNA\ secondary structures,
used generating functions to estimate the mean number of 
base pairs and also of unpaired bases in the exterior loop.
Again, using the largest means reported, the two-state distance
estimate yields an \ETE\ of 2.71 nm.

Parameters are an important factor in these analyses.
A common one in theoretical models is the minimum hairpin length.
This is negatively correlated with \LEN, although the differences
are small~\cite{Clote:2011}, i.e.\ about a nucleotide. 
In the models we consider, 
Pfold has a minimum of length 1 but the Motzkin paths could be 0.
Another factor is that only some nucleotides can pair in a real
\RNA\ sequence.
Other analyses~\cite{Hofacker:1998, Clote:2011} have considered 
a ``stickiness'' factor which models the pairing propensity, 
which again is negatively correlated with $\LEN = 2 \DEG + \UNP$
and again the differences are small.

Finally, we note that
others have also analyzed \LEN\ theoretically~\cite{Han:2012}
using the methodology of generating functions and 
analytic combinatorics,
or shown how to efficiently compute it for a given 
sequence~\cite{Mori:2014} under the NNTM recursions.
Additionally, if crossed pairings/pseudoknots are included,
the calculation changes considerably~\cite{Fang:2011}.
In particular, the growth rate of \LEN\ with sequence length 
is no longer negligible.
As we will discuss, the effect of pseudoknots in the exterior loop
(as seen in 16S ribosomal \RNA\ in \Cref{sec:Data})
may well be to extend the \ETE\ distance beyond the expected small amount.

\section{Extracting asymptotic distributions} \label{sec:Analytic}

Here, we lay out our main tool from analytic combinatorics that allows us to extract limiting distributions of parameters from generating functions.  Our result allows us to track the multivariate distribution of several parameters simultaneously.  We view $z$ as the variable associated to the size of our objects, and $\mathbf{u}$ as the array of variables associated to the parameters. 

We define $\mathbf{u} = (u_1, u_2, \ldots, u_d)$ and $\mathbf{u}^\mathbf{k} = u_1^{k_1} \cdots u_d^{k_d}$.  We also define $[z^n \mathbf{u}^{\mathbf{k}}]F(z, u)$ to be the coefficient of $z^n \mathbf{u}^{\mathbf{k}}$ in the series expansion of $F(z, u)$.  Additionally, we let $\mathbf{0}$ and $\mathbf{1}$ be the vectors of all zeroes and all ones, respectively.

In the context of \rna, $z$ will track the length of an \rna\ sequence, while $\mathbf{u}$ will track the degree of the exterior loop and number of unpaired bases in the exterior loop. For example, if $F(z, \mathbf{u})$ enumerates \rna\ secondary structures under some model, then $[z^{100} u_1^{1} u_2^{6}] F(z, \mathbf{u})$ would correspond to the number of \rna\ secondary structures of length $100$ with an exterior loop of degree $1$ containing $6$ unpaired bases.

\begin{proposition} \label{prop:GF}
Let $F(z, \mathbf{u})$ be a multivariate GF where $[z^n\mathbf{u}^\mathbf{k}]F(z, \mathbf{u})$ counts the number of objects of size $n$ with parameters equal to $\mathbf{k} = (k_1, k_2, \ldots, k_d)$.  Suppose $F(z, \mathbf{u})$ is of the form
   
    \[
    F(z, \mathbf{u}) = \frac{-b(z, \mathbf{u}) \pm g(z, \mathbf{u}) \sqrt{c(z)}}{2a(z, \mathbf{u})}
    \]
    with everywhere analytic functions $a, b, c,$ and $g$. Define $p_{n, \mathbf{k}}$ to be the probability that an object of size $n$ has parameters equal to $\mathbf{k}$ as below:
    \[
    p_{n, \mathbf{k}} = \frac{[z^n\mathbf{u}^\mathbf{k}] F(z, \mathbf{u})}{[z^n]F(z, \mathbf{1})}.
    \]
    Additionally, suppose there exists a $\rho > 0$ such that $c(\rho) = 0$ and $c(z) \neq 0$ for any other value of $z$ with $|z| \leq \rho$.  Assume that $c'(\rho) \neq 0$, and that for $\mathbf{u} \neq 0$ in a neighborhood of zero: $g(\rho, \mathbf{u}) \neq 0, a(\rho, \mathbf{u}) \neq 0$, and there are no non-removable singularities of $F(z, \mathbf{u})$ with $|z| \leq \rho$ besides $z = \rho$.  Also, assume $F(z, \mathbf{1})$ has no non-removable singularities with $|z| \leq \rho$ besides $z = \rho$. Then,
    \[
    \lim_{n \to \infty} p_{n, \mathbf{k}} = [\mathbf{u}^\mathbf{k}] \left(\frac{g(\rho, \mathbf{u})}{g(\rho, \mathbf{1})} \cdot \frac{a(\rho, \mathbf{1})}{a(\rho, \mathbf{u})}\right).
    \]
\end{proposition}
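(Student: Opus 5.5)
The plan is to apply singularity analysis (transfer theorems of Flajolet–Odlyzko) coefficient-by-coefficient in $\mathbf{u}$.

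\textbf{Step 1: isolate the singular part.} Write
\[
F(z,\mathbf{u}) = -\frac{b(z,\mathbf{u})}{2a(z,\mathbf{u})} \pm \frac{g(z,\mathbf{u})}{2a(z,\mathbf{u})}\sqrt{c(z)} .
\]
The first term, together with the factor multiplying $\sqrt{c}$, is meromorphic. By hypothesis, $F$ has no non-removable singularities in $|z|\le\rho$ other than $\rho$. Poles of $1/a$ in that disk must therefore cancel. The rational part contributes only coefficients that are exponentially smaller than $\rho^{-n}$, or that vanish after the cancellation. So the dominant behaviour comes from the square-root term at $z=\rho$.

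\textbf{Step 2: expand at the singularity.} Since $c(\rho)=0$ and $c'(\rho)\neq 0$, we have
\[
\sqrt{c(z)} = \sqrt{-\rho c'(\rho)}\,\sqrt{1-z/\rho}\,\bigl(1+O(1-z/\rho)\bigr).
\]
Near $\rho$ this gives
\[
F(z,\mathbf{u}) = (\text{analytic}) \pm \frac{g(\rho,\mathbf{u})}{2a(\rho,\mathbf{u})}\sqrt{-\rho c'(\rho)}\,(1-z/\rho)^{1/2} + O\bigl((1-z/\rho)^{3/2}\bigr).
\]
Here $a(\rho,\mathbf{u})\neq0$ and $g(\rho,\mathbf{u})\neq0$. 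Because $\rho$ is the unique zero of $c$ on the closed disk and the functions are entire, $F$ continues to a $\Delta$-domain at $\rho$. The transfer theorem then gives
\[
[z^n]F(z,\mathbf{u}) \sim \mp \frac{g(\rho,\mathbf{u})\sqrt{-\rho c'(\rho)}}{2a(\rho,\mathbf{u})}\,\frac{\rho^{-n}n^{-3/2}}{2\sqrt{\pi}} .
\]
This holds uniformly for $\mathbf{u}$ in a small polydisk around $\mathbf{0}$ (minus $\mathbf{0}$, as in the hypotheses). The same argument applies at $\mathbf{u}=\mathbf{1}$ using the separate hypothesis on $F(z,\mathbf{1})$.

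\textbf{Step 3: take the ratio.} Form $[z^n]F(z,\mathbf{u})/[z^n]F(z,\mathbf{1})$. The factors $\rho^{-n}n^{-3/2}$, $\sqrt{-\rho c'(\rho)}$ and the sign all cancel. The ratio converges to
\[
h(\mathbf{u}) = \frac{g(\rho,\mathbf{u})\,a(\rho,\mathbf{1})}{g(\rho,\mathbf{1})\,a(\rho,\mathbf{u})},
\]
uniformly on a small polydisk, or at least on a torus $|u_i|=r$. The left side is the PGF $\sum_{\mathbf{k}} p_{n,\mathbf{k}}\mathbf{u}^{\mathbf{k}}$. Extract the coefficient by Cauchy's integral over the torus $|u_i|=r$. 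Uniform convergence lets us interchange the limit and the integral, giving $p_{n,\mathbf{k}}\to[\mathbf{u}^{\mathbf{k}}]h(\mathbf{u})$. Alternatively, pointwise convergence of PGFs on an open set combined with a continuity theorem for discrete distributions would also suffice.

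\textbf{Main obstacle.} The delicate point is uniformity in $\mathbf{u}$, in two places. First, the error terms in the transfer theorem must be uniform for $\mathbf{u}$ on the torus. Second, the rational part must not contribute at order $\rho^{-n}$. Any pole of $b/a$ or $g/a$ near $|z|=\rho$ must cancel, which is exactly what the no-singularity hypothesis provides. For uniformity, I would invoke the uniform version of singularity analysis (Flajolet–Sedgewick, Thm.~IX.7 style). This applies because all data depend analytically on $\mathbf{u}$ and the $\Delta$-domain can be chosen independently of $\mathbf{u}$, since $c$ does not depend on $\mathbf{u}$. Compactness of the torus then supplies a uniform radius of analyticity beyond $\rho$ for the remaining meromorphic pieces.
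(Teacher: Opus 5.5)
Your proposal is correct and follows the paper's proof. Both arguments fix $\mathbf{u}$, expand $F(z,\mathbf{u}) = -\frac{b(\rho,\mathbf{u})}{2a(\rho,\mathbf{u})} \pm \frac{g(\rho,\mathbf{u})\sqrt{-\rho c'(\rho)}}{2a(\rho,\mathbf{u})}\sqrt{1-z/\rho} + O(1-z/\rho)$ at $z=\rho$, transfer to $[z^n]$, divide by the $\mathbf{u}=\mathbf{1}$ asymptotics, and then use the Continuity Theorem (your ``alternative'') to go from pointwise convergence on a punctured neighbourhood of $\mathbf{0}$ to convergence of each $p_{n,\mathbf{k}}$.

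The uniformity you single out as the main obstacle is not needed, because $\bigl|\sum_{\mathbf{k}} p_{n,\mathbf{k}}\mathbf{u}^{\mathbf{k}}\bigr|\le 1$ on the unit polydisk, so Vitali's theorem upgrades pointwise convergence to locally uniform convergence for free, which is what the paper's appeal to the Continuity Theorem encodes. Your compactness argument for a $\mathbf{u}$-independent $\Delta$-domain would in any case need an extra hypothesis such as $a(z,\mathbf{u})\neq 0$ on $|z|\le\rho$, since the removability of zeros of $a$ is only assumed pointwise in $\mathbf{u}$.
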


\begin{proof}
    For any fixed $\mathbf{u} \neq 0$ with $|\mathbf{u}|$ sufficiently small, we examine the singular expansion of $F(z, \mathbf{u})$ near its closest non-removable singularity to the origin, which by assumption is $z = \rho$.  We can expand each part of $F$ separately, obtaining as $z \to \rho$:
    \[
    -\frac{b(z, \mathbf{u})}{2a(z, \mathbf{u})} = -\frac{b(\rho, \mathbf{u})}{2a(\rho, \mathbf{u})} + O\left(1 - \frac{z}{\rho}\right)
    \]
    and
    \[
    \frac{g(z, \mathbf{u})\sqrt{c(z)}}{2a(z, \mathbf{u})} = \frac{g(\rho, \mathbf{u}) \sqrt{-\rho c'(\rho)}}{2a(\rho, \mathbf{u})} \sqrt{1 - z/\rho} + O(1 - z/\rho)^{3/2}.
    \]
    Overall, for fixed $\mathbf{u} \neq 0$ sufficiently small as $z \to \rho$,
    \[
    F(z, \mathbf{u}) = -\frac{b(\rho, \mathbf{u})}{2a(\rho, \mathbf{u})} \pm \frac{g(\rho, \mathbf{u})\sqrt{-\rho c'(\rho)}}{2a(\rho, \mathbf{u})}\sqrt{1 - z/\rho} + O(1 - z/\rho).
    \]
    By standard transfer theorems of analytic combinatorics (c.f.~\cite[Corollary VI.1]{FlSe:2009}),
    \[
    [z^n] F(z, \mathbf{u}) \sim \frac{g(\rho, \mathbf{u})\sqrt{-\rho c'(\rho)}}{2a(\rho, \mathbf{u})} \cdot \frac{\rho^{-n}n^{-3/2}}{\Gamma(-1/2)}
    \]
    and
    \[
    [z^n] F(z, \mathbf{1}) \sim \frac{g(\rho, \mathbf{1}) \sqrt{-\rho c'(\rho)}}{2a(\rho, \mathbf{1})} \cdot \frac{\rho^{-n}n^{-3/2}}{\Gamma(-1/2)}.
    \]
    Dividing the two gives that pointwise for each $\mathbf{u} \neq 0$ sufficiently small,
    \[
    \lim_{n \to \infty} \frac{[z^n] F(z, \mathbf{u})}{[z^n] F(z, \mathbf{1})} = \frac{g(\rho, \mathbf{u})}{g(\rho, \mathbf{1})} \cdot \frac{a(\rho, \mathbf{1})}{a(\rho, \mathbf{u})}.
    \]
    Because this holds for a collection of small $\mathbf{u}$ values including an accumulation point within the $d$-dimensional unit disk, the Continuity Theorem~\cite[Theorem IX.1]{FlSe:2009} completes the proof.
\end{proof}

\section{End-proximity in the exterior loop} \label{sec:ModelAnalysis}

We now apply Proposition~\ref{prop:GF} to each of our branching models in order of increasing complexity.  Our calculations also appear in a supplementary {\tt SageMath} worksheet available here:
\begin{center}
    \url{https://github.com/gtDMMB/EndProximity}
\end{center}

\subsection{End-proximity in plane trees/Dyck paths} \label{sec:Plane}

Dyck paths serve as our most abstract model on $\mathcal{L}$ where the only measure of end-proximity is the degree, {\tt DEG}.  The following lemma is a recapitulation of the result in~\cite[Example IX.8]{FlSe:2009}, but provides a template for using Proposition~\ref{prop:GF} that we will mimic later.

\begin{lem} \label{lem:DyckDegree}
Consider the uniform distribution over all Dyck paths as $n \to \infty$.  ${\tt DEG}$ tends in distribution to $1 + \NB(2, 1/2)$, which has mean $3$ and variance $4$.
\end{lem}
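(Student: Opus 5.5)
The plan is to build a bivariate generating function for Dyck paths that marks the root degree, put it into the form required by Proposition~\ref{prop:GF}, and then read off the limiting probability generating function. Let $C(z)$ count Dyck paths by semilength, with $z$ marking the number of up steps. It satisfies $C = 1 + zC^2$, so
\[
C(z) = \frac{1 - \sqrt{1-4z}}{2z}.
\]
A nonempty Dyck path decomposes uniquely into a sequence of $k \geq 1$ primitive paths, each of the form $U\,w\,D$ with $w$ an arbitrary Dyck path. Each primitive component contributes one to \DEG. Marking each component by $u$ gives
\[
F(z,u) = \sum_{k\ge1} \bigl(u z C(z)\bigr)^k = \frac{uzC(z)}{1 - uzC(z)} .
\]

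The next step is to rewrite $F$ in the shape $(-b \pm g\sqrt{c})/(2a)$. Substituting $zC = (1-\sqrt{1-4z})/2$ gives
\[
F = \frac{u(1-s)}{2 - u + us}, \qquad s = \sqrt{1-4z}.
\]
Rationalizing by multiplying numerator and denominator by $(2-u) - us$ yields
\[
F(z,u) = \frac{u(1-s)\bigl((2-u)-us\bigr)}{(2-u)^2 - u^2(1-4z)}.
\]
The numerator expands, using $s^2 = 1-4z$, to
\[
u(2-u) - u^2(1-4z) - \bigl(u(2-u)+u^2\bigr)s = u(2-u) - u^2(1-4z) - 2u\,s.
\]
So we may take
\[
c(z) = 1-4z, \quad g(z,u) = -2u, \quad a(z,u) = \tfrac12\bigl((2-u)^2 - u^2(1-4z)\bigr),
\]
with $b$ absorbing the polynomial part. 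All of these are polynomials and hence entire. Then $\rho = 1/4$, $c'(\rho) = -4 \neq 0$, and $a(\rho,u) = \tfrac12(2-u)^2$, which is nonzero for small $u$. The function $g(\rho,u) = -2u$ is nonzero for $u \neq 0$.

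The main obstacle is checking the singularity hypotheses. We must confirm that for small $u \neq 0$ the only non-removable singularity of $F(z,u)$ in $|z| \le 1/4$ is $z = 1/4$. The denominator $1 - uzC(z)$ could vanish, but $|zC(z)| \le 1/2$ on that disk, so for $|u|<2$ it does not. This bound also covers $u = 1$, where $F(z,1) = C(z) - 1$ is singular only at $1/4$.

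Proposition~\ref{prop:GF} then gives the limiting probability generating function
\[
\frac{g(\rho,u)}{g(\rho,1)}\cdot\frac{a(\rho,1)}{a(\rho,u)} = u\cdot\frac{1}{(2-u)^2} = u\left(\frac{1/2}{1-u/2}\right)^2 .
\]
This is the generating function of $1 + \NB(2,1/2)$. The mean is $1 + 2\cdot\frac{1/2}{1/2} = 3$ and the variance is $2\cdot\frac{1/2}{1/4} = 4$. These values can also be confirmed by differentiating the generating function at $u=1$.
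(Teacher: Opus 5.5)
Your proof is correct and follows the paper's route: the same generating function (your $F$ is the paper's $1/(1-zuD(z,1))$ minus the empty path), the same rationalization, and the same application of Proposition~\ref{prop:GF}, giving $u/(2-u)^2$. Your bound $|zC(z)|\le 1/2$ on $|z|\le 1/4$ also supplies the singularity check that the paper only asserts. The one slip is a sign in the polynomial part of the rationalized numerator: it should read $u(2-u)+u^2(1-4z)$, not $u(2-u)-u^2(1-4z)$. This term is absorbed into $b$, so it does not affect $g$, $a$, or the conclusion.
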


\begin{proof}
    Let $D(z, u)$ be the GF where $[z^n u^k] D(z, u)$ is 
the number of Dyck paths of length $2n$ with $k$ mountains.  Then, by the standard recurrence for Dyck paths (see \Cref{fig:Dyck}),
    \[
    D(z, u) = 1 + z (u \cdot D(z, 1)) \cdot D(z, u)
    \]
where $D(z, 1) = 1 + zD^2(z,1)$ is the standard univariate GF.
Solving for $D(z, u)$ yields
    \[
    D(z, u) = \frac{1}{1 - zuD(z, 1)} = \frac{2}{2 - u + u\sqrt{1 - 4z}} = \frac{2((u - 2) - (u\sqrt{1 - 4z}))}{(2 - u)^2 - u^2(1-4z)},
    \]
   which meets the conditions of Proposition~\ref{prop:GF} for $u$ sufficiently small and $|z| \leq 1/4$.  
Since the discriminant depends only on $z$, its contribution to $2a(z,u)$ 
disappears at the singularity $z = 1/4$.  
Thus, the distribution of peaks approaches the one encoded by 
    \[
    u \cdot \frac{a(1/4, 1)}{a(1/4, u)} = \frac{u}{(2-u)^2}.
    \]
This corresponds to the sum of two independent 
geometric random variables, each with parameter $1/2$, shifted by 
one position.
\end{proof}

The limiting distribution provides complete asymptotic information about the probability of a Dyck path having $\DEG = k$ for any $k$.  With this, we can compute the moments of $\ETE$ to arbitrary precision.  For any Dyck path, we interpret $\UNP = 0$.

\begin{cor}\label{cor:DyckETE}
    For the uniform distribution over all Dyck paths, the limiting distribution of \ETE\ has mean approximately $2.89$ and variance approximately $1.42$.
\end{cor}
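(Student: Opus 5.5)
The plan is to reduce the statement to an explicit series computation using the limiting law already found in Lemma~\ref{lem:DyckDegree}. Since a Dyck path has no unpaired bases, we set $\UNP = 0$, so $\COV = \DEG - 1$. By Table~\ref{table:Parameters}, \ETE\ is then a deterministic function of \DEG\ alone:
\[
\ETE = f(\DEG), \qquad f(k) := \sqrt{1.5^2\, k^{6/5} + 0.62^2\, (k-1)^{6/5}} .
\]
So the limiting distribution of \ETE\ is the pushforward under $f$ of the limiting distribution of \DEG. This follows from the continuous mapping theorem; here it is trivial, since \DEG\ is integer valued and convergence of the point probabilities $p_{n,k}$ transfers directly.

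Next I would make the limit law of \DEG\ explicit. Lemma~\ref{lem:DyckDegree} gives the PGF $u/(2-u)^2$. Expanding
\[
\frac{u}{(2-u)^2} = \frac{u}{4}\sum_{j\ge 0}(j+1)\left(\frac{u}{2}\right)^j
\]
gives $\Pr(\DEG = k) = k/2^{k+1}$ for $k \ge 1$. The limiting mean and second moment of \ETE\ are then
\[
\mu = \sum_{k\ge1} f(k)\,\frac{k}{2^{k+1}}, \qquad m_2 = \sum_{k\ge1} f(k)^2\,\frac{k}{2^{k+1}} ,
\]
and the variance is $m_2 - \mu^2$. Note that $f(k)^2 = 2.25\,k^{6/5} + 0.3844\,(k-1)^{6/5}$ needs no square root, so $m_2$ is slightly simpler to evaluate than $\mu$.

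For the numerics, I would truncate both series at some $K$ (say $K = 60$) and bound the tails rigorously. Using $f(k)^2 \le 2.64\,k^{6/5} \le 2.64\,k^2$, the tail of $m_2$ is at most $2.64\sum_{k>K} k^3/2^{k+1}$, which is exponentially small. The same bound, with $f(k) \le f(k)^2$ for $k \ge 1$ (since $f(1) = 1.5 > 1$), controls the tail of $\mu$. A direct computation, as in the SageMath worksheet, should give $\mu \approx 2.89$ and $m_2 - \mu^2 \approx 1.42$. As a sanity check, the first two terms are $f(1) = 1.5$ with weight $1/4$ and $f(2) \approx 2.36$ with weight $1/4$.

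The only conceptual subtlety is the interpretation of the statement. If it is read as convergence of the finite-$n$ moments of \ETE\ to these values, rather than as moments of the limit law, then one additional step is needed: uniform integrability of $\ETE^2 \le 2.64\,\DEG^2$. I would obtain this by bounding $\mathbb{E}[\DEG^3]$ uniformly in $n$. Since $[z^n]D(z,u)$ has a uniform square-root singular expansion for $u$ in a complex neighbourhood of $1$, the factorial moments converge to those of the limit law, which are finite. This step is the main obstacle; everything else is routine summation.
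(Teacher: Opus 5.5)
Your proposal is correct and follows essentially the same route as the paper. Like the paper, you expand $u/(2-u)^2$ to get $\mathbb{P}(\DEG = k) = k/2^{k+1}$, write \ETE\ as a function of \DEG\ alone with $\UNP = 0$, and evaluate truncated series for the first two moments with tails bounded by explicit sums of the form $\sum k^m/2^k$. The paper uses the sharper bound $f(k) \le 1.63\,k^{3/5}$, truncates at $20$ and $27$ terms, and obtains $\mathbb{E}(\ETE) \approx 2.893$ and $\mathbb{E}(\ETE^2) \approx 9.795$; your closing uniform-integrability remark goes beyond the paper, which computes only the moments of the limit law, as the statement requires.
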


\begin{proof}
    We can find the probabilities in the limiting distribution of \DEG\ by expanding the GF in \Cref{lem:DyckDegree}.  To expand, note that the function $1/(1-x)^2$ is the derivative of $1/(1-x)$, and hence the series for $1/(1-x)^2$ is the derivative of a geometric series.  We obtain:
    \[
    \frac{u}{(2-u)^2} = \frac{1}{2} \frac{\frac{u}{2}}{\left(1 - \frac{u}{2}\right)^2} = \sum_{n = 0}^\infty \frac{n}{2^{n + 1}} u^n.
    \]
    By the definition of a PGF, this implies that in the limiting distribution,
    \[
    \mathbb{P}(\DEG = n) = \frac{n}{2^{n + 1}}.
    \]
    Note that this could also have been derived by using the definition of the negative binomial, but these sorts of expansions are useful for the more general bivariate distributions below.

    Since we interpret $\UNP = 0$ for Dyck paths, our distance formula becomes
    \[
    \ETE = \sqrt{(1.5)^2 \DEG^{6/5} + (0.62)^2 (\DEG - 1)^{6/5}}
    \]
    Thus,
    \[
    \mathbb{E}(\ETE) = \sum_{n = 0}^\infty \frac{n}{2^{n + 1}}\sqrt{(1.5)^2 n^{6/5} + (0.62)^2 (n - 1)^{6/5}}
    \]
    To compute this sum to within $0.001$, we must bound the tails of the sum.  Note:
    \[
    T_k := \sum_{n = k}^\infty \frac{n}{2^{n + 1}}\sqrt{(1.5)^2 n^{6/5} + (0.62)^2 (n - 1)^{6/5}} \leq \sum_{n = k}^\infty \frac{n}{2^{n + 1}} \cdot 1.63 n^{3/5} \leq \frac{1.63}{2}\sum_{n = k}^\infty \frac{n^2}{2^n},
    \]
    where we bounded the term under the square root by replacing $n - 1$ with $n$ and combining the two terms.

    It turns out that the latter sum may be computed explicitly.  To do so, we first need some well-known formulas for geometric series and their derivatives when $|x| < 1$:
    \begin{equation}\label{eq:geometric}
        \frac{1}{1 - x} = \sum_{n = 0}^\infty x^n, \ \ \frac{x}{(1-x)^2} = \sum_{n = 0}^\infty nx^n, \ \ \frac{x(1 + x)}{(1-x)^3} = \sum_{n = 0}^\infty n^2 x^n
    \end{equation}
    Replacing $x$ with $1/2$ yields for $k \geq 6$:
    \begin{align*}
    \sum_{n = k}^\infty \frac{n^2}{2^n} &= \frac{1}{2^k}\sum_{n = 0}^\infty \frac{(n + k)^2}{2^n} = \frac{1}{2^k} \left(\sum_{n = 0}^\infty \frac{n^2}{2^n} + 2k \sum_{n = 0}^\infty \frac{n}{2^n} + k^2 \sum_{n = 0}^\infty \frac{1}{2^n}\right)\\
    &= \frac{1}{2^k} \left(6 + 4k + 2k^2 \right) \leq \frac{3k^2}{2^{k}},
    \end{align*}
    where the $k \geq 6$ condition was used only in the last inequality.  Hence, $T_k \leq 1.63/2 \cdot 3k^2/2^{k}$, and the right side is a decreasing function for $k \geq 3$.  That means that for all $k \geq 20$, $T_k \leq 0.001$.  Hence, it suffices to approximate $\mathbb{E}(\ETE)$ to within two decimal places by computing
    \[
    \sum_{n = 0}^{19} \frac{n}{2^{n + 1}}\sqrt{(1.5)^2 n^{6/5} + (0.62)^2 (n - 1)^{6/5}},
    \]
    the value of which is $2.893$.  Similarly, when computing $\mathbb{E}(\ETE)^2$, the tail is now
    \[
    T_k = \sum_{n = k}^\infty \frac{n}{2^{n + 1}} \left((1.5)^2 n^{6/5} + (0.62)^2 (n - 1)^{6/5}\right) \leq \frac{2.64}{2} \sum_{n = k}^\infty \frac{n^3}{2^n}
    \]
    Using the same steps with \Cref{eq:geometric} and the additional relation $x(1 + 4x + x^2)/(1-x)^4 = \sum_{n = 0}^\infty n^3x^n$, we obtain for $k \geq 9$:
    \[
    T_k \leq \frac{2.64}{2} \cdot \frac{1}{2^k} \cdot\left(2k^3 + 6k^2 + 18k + 26 \right) \leq \frac{2.64}{2} \cdot \frac{3k^3}{2^{k}},
    \]
    where the rightmost function is decreasing for $k \geq 5$.  Hence, we find that the tail is smaller than $0.001$ for $k \geq 27$, meaning we can estimate $\mathbb{E}(\ETE)^2$ to two decimal places with the sum,
    \[
    \sum_{n = 0}^{26} \frac{n}{2^{n + 1}} \left((1.5)^2 n^{6/5} + (0.62)^2 (n - 1)^{6/5}\right),
    \]
    and we obtain $\mathbb{E}(\ETE)^2 \approx 9.795$.  Then, the variance can be computed with the shortcut formula Var$(\ETE) = \mathbb{E}(\ETE)^2 - (\mathbb{E}(\ETE))^2 \approx 1.42$, where we note that the precision of $\mathbb{E}(\ETE)$ needs to be increased in order to guarantee that $(\mathbb{E}(\ETE))^2$ has error at most $0.001$.
    
\end{proof}

\subsection{End-proximity in dot-bracket sequences/Motzkin paths} \label{sec:Motzkin}
Dot-bracket sequences provide a more nuanced model of branching.
The distributional results for the sum {\tt UNP + DEG} appeared with a different proof method in~\cite[Theorem 3.1]{Banderier:2018}.

\begin{proposition} \label{lem:MotzkinPaths}
Consider the uniform distribution over all Motzkin paths of length $n$ as $n \to \infty$.  The limiting PGF encoding \UNP\ $(u)$ and \DEG\ $(v)$ as the length of the sequence tends towards infinity is given by
\[
M_p(u, v) = \frac{v}{(u + v - 3)^2}
\]
corresponding to a joint negative binomial distribution. 
\end{proposition}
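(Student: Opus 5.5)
The plan is to mimic the proof of \Cref{lem:DyckDegree}: write down a trivariate generating function for Motzkin paths that marks the exterior loop, put it in the form required by Proposition~\ref{prop:GF}, and read off the limit from the ratio $g/a$ at the dominant singularity.

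\textbf{Step 1: the generating function.} Let $M(z)$ be the univariate GF of Motzkin paths by length. From the first-return decomposition, $M = 1 + zM + z^2M^2$, so
\[
M(z) = \frac{1 - z - \sqrt{1-2z-3z^2}}{2z^2}.
\]
A Motzkin path decomposes along its horizon as a sequence of pieces. Each piece is either a horizontal step, weighted $zu$, or a mountain $U\,M\,D$, weighted $vz^2M(z)$. Hence
\[
F(z,u,v) = \frac{1}{1 - zu - vz^2M(z)} = \frac{2}{2 - 2zu - v(1-z) + v\sqrt{c(z)}},
\]
where $c(z) = 1-2z-3z^2 = (1-3z)(1+z)$.

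\textbf{Step 2: the required form.} Rationalizing the denominator gives
\[
F(z,u,v) = \frac{2\bigl(2-2zu-v(1-z)\bigr) - 2v\sqrt{c(z)}}{\bigl(2-2zu-v(1-z)\bigr)^2 - v^2 c(z)}.
\]
This has the shape of Proposition~\ref{prop:GF} with $g(z,u,v) = -4v$ (up to the fixed choice of sign) and $2a(z,u,v)$ equal to the polynomial denominator. All of $a$, $b$, $c$, $g$ are polynomials, so they are entire. The only zero of $c$ in $|z|\le 1/3$ is $\rho = 1/3$, and $c'(1/3) = -4 \neq 0$.

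\textbf{Step 3: evaluating the limit.} At $z = \rho$ the term $v^2c(\rho)$ vanishes, so
\[
a(\rho,u,v) \propto \Bigl(2 - \tfrac{2u}{3} - \tfrac{4v}{6}\Bigr)^2 = \tfrac{4}{9}(3-u-v)^2,
\]
and $g(\rho,u,v) \propto v$. Proposition~\ref{prop:GF} then gives the limiting PGF
\[
\frac{g(\rho,u,v)}{g(\rho,1,1)}\cdot\frac{a(\rho,1,1)}{a(\rho,u,v)} = \frac{v\cdot 1}{(3-u-v)^2} = \frac{v}{(u+v-3)^2}.
\]
As a sanity check, this equals $1$ at $u = v = 1$, as a PGF must.

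\textbf{Step 4: the hypotheses (main obstacle).} The step needing the most care is verifying the analytic hypotheses of Proposition~\ref{prop:GF}.

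First, $g(\rho,u,v) \ne 0$ and $a(\rho,u,v) \ne 0$ for $v \ne 0$ near zero. This is immediate from the factored forms above, since $3-u-v \neq 0$ for small $u,v$.

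Second, $F(z,u,v)$ and $F(z,1,1)$ must have no singularities in $|z| \le 1/3$ other than $z = 1/3$. Two things could go wrong: a branch point of $\sqrt{c}$, or a zero of $1 - zu - vz^2M(z)$.
\begin{itemize}
\item Branch points are excluded because $c$ vanishes only at $1/3$ and $-1$.
\item For zeros of the denominator, use positivity of coefficients. For $|z| \le 1/3$,
\[
|zu + vz^2M(z)| \le \tfrac{1}{3}|u| + |v|\,\tfrac{1}{9}M(\tfrac13).
\]
Since $M(1/3) = 3$, this is at most $(|u|+|v|)/3$.
\item This bound is below $1$ both for small $(u,v)$ and at $(u,v) = (1,1)$, where it equals $2/3$. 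So the denominator does not vanish on the closed disk.
\end{itemize}
Hence the zeros of the rationalized denominator $a$ are removable there. With all hypotheses checked, Proposition~\ref{prop:GF} applies, and the Continuity Theorem step inside it yields convergence of the joint law of $(\UNP,\DEG)$.

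The joint negative binomial interpretation then follows by writing
\[
\frac{v}{(3-u-v)^2} = \frac{v}{9}\Bigl(1 - \tfrac{u+v}{3}\Bigr)^{-2}.
\]
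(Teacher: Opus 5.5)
Your proposal is correct and follows the paper's proof essentially step for step: the same $\mbox{SEQ}$ decomposition giving $1/(1 - zu - vz^2M(z))$, the same rationalization, and the same evaluation $a(1/3,u,v) \propto (3-u-v)^2$ with $g \propto v$. One small slip: taking $2a$ to be the rationalized denominator gives $g = 2v$ up to sign, not $-4v$, but constants cancel in the ratio. Your positivity bound $|zu + vz^2M(z)| \le (|u|+|v|)/3$ on $|z|\le 1/3$ is a slightly more explicit check of the singularity hypothesis than the paper's, and unlike the paper it also covers $(u,v)=(1,1)$.
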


\begin{proof}
    Let $M(z, u, v)$ be the GF tracking Motzkin paths according to length $(z)$, \UNP\ $(u)$, and \DEG\ $(v)$.  The univariate GF $M(z, 1, 1)$ can be specified using the symbolic method as follows:
        \begin{equation*} \label{eq:MotzkinSpecification}
    \mathcal{M} = \mbox{SEQ}\bigg(\rightarrow \mbox{ or } \nearrow \mathcal{M} \searrow\bigg)
    \end{equation*}
    In words, this states that any Motzkin path can be decomposed uniquely into a sequence of the following: a horizontal step; or, a single step up followed by a Motzkin path followed by a single step down.  Additionally, it is simple to track both \DEG\ and \UNP\ in this specification: each horizontal step corresponds to an unpaired nucleotide, and each occurrence of $\nearrow \mathcal{M} \searrow$ adds one to \DEG.  In total,
    \begin{equation} \label{eq:MetaMotzkin}
    M(z, u, v) = \frac{1}{1 - zu - z^2v M(z, 1, 1)}.
    \end{equation}
  Critically, the discriminant of the univariate GF still determines the contributing singularity of the generating function and hence the distribution.  After rationalizing \eqref{eq:MetaMotzkin}, we obtain:
    \[
    M(z, u, v) = 2\frac{2 - 2uv - v + vz - v\sqrt{(1-3z)(1+z)}}{(2-2uz-v+vz)^2 + v^2(1-3z)(1+z)}
    \]
    This GF satisfies the conditions of Proposition~\ref{prop:GF}: first, $g(z, u, v) = v$, and all functions are analytic.  Consider the zeroes of the denominator.  For $|z| \leq 1/3$ and $u$ and $v$ sufficiently small, $(2-2uz-v+vz)$ is bounded away from zero, implying that the denominator overall is bounded away from zero for $|z| \leq 1/3$ and $u$ and $v$ sufficiently small. Hence, the algebraic singularity $z = 1/3$ is indeed the minimal singularity of the GF in all cases.

   Substituting $z = 1/3$ into the denominator causes the $(1-3z)$ term to disappear, leaving us with $a(1/3, u, v) = 4/9(u + v - 3)^2$ in the terms of Proposition~\ref{prop:GF}.  Then, substituting into the formula of Proposition~\ref{prop:GF} completes the proof.

\end{proof}

\begin{cor} \label{cor:Motzkin}
    For the uniform distribution over Motzkin paths, the limiting distributions of \DEG, \UNP, \COV, \LEN, and \ETE\ are as in \Cref{table:MainProp}.
\end{cor}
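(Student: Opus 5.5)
The plan is to derive every entry from the joint limiting PGF $M_p(u,v) = v/(u+v-3)^2$ of Proposition~\ref{lem:MotzkinPaths}, where $u$ tracks \UNP\ and $v$ tracks \DEG. By the Continuity Theorem, the limiting law of any function of $(\UNP,\DEG)$ is the pushforward of this limiting joint law. So the marginal and derived distributions come from specializing variables.

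\emph{Step 1: marginals.} Setting $u=1$ gives $v/(v-2)^2 = v\,[\tfrac{1/2}{1-v/2}]^2$, which is the PGF of $1+\NB(2,1/2)$. This matches Lemma~\ref{lem:DyckDegree}, with mean $3$ and variance $4$. Setting $v=1$ gives $1/(u-2)^2 = [\tfrac{1/2}{1-u/2}]^2$, so $\UNP \sim \NB(2,1/2)$, with mean $2$ and variance $4$.

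\emph{Step 2: linear combinations.} Since $\COV = \DEG+\UNP-1$, its PGF is $w^{-1}M_p(w,w) = 1/(2w-3)^2 = [\tfrac{1/3}{1-2w/3}]^2$. This is $\NB(2,1/3)$, with mean $2\cdot(2/3)/(1/3)=4$ and variance $2\cdot(2/3)/(1/9)=12$. Since $\LEN = 2\DEG+\UNP$, its PGF is $M_p(u,u^2) = u^2/(u^2+u-3)^2 = (u/(u^2+u-3))^2$. The mean and variance follow either from $\LEN = 2\DEG+\UNP$ together with the covariance, or directly from $h'(1)$ and $h''(1)+h'(1)-h'(1)^2$. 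The mean is $2\cdot3+2=8$. For the variance, $\mathrm{Var}(\LEN)=4\cdot4+4+4\,\mathrm{Cov}(\DEG,\UNP)$. Using $\mathrm{Var}(\COV)=12=4+4+2\,\mathrm{Cov}$ gives $\mathrm{Cov}=2$, and hence $\mathrm{Var}(\LEN)=28$.

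\emph{Step 3: \ETE.} \ETE\ is a nonlinear function of $(\DEG,\COV)$, so the full joint law is needed. Write $M_p(u,v) = \tfrac{v}{9}(1-\tfrac{u+v}{3})^{-2}$ and expand:
\[
\mathbb{P}(\UNP=i,\DEG=j+1) = \frac{(i+j+1)}{3^{i+j+2}}\binom{i+j}{i}.
\]
Then $\mathbb{E}(\ETE)$ and $\mathbb{E}(\ETE^2)$ are double sums of this weight times $\sqrt{1.5^2\DEG^{6/5}+0.62^2\COV^{6/5}}$ or its square.

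It is convenient to group the terms by $m=\DEG+\UNP-1=\COV$. The total weight on level $m$ is $(m+1)2^m/3^{m+2}$, which is just the \COV\ marginal.

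\emph{Main obstacle: certified numerics.} Mirroring Corollary~\ref{cor:DyckETE}, I bound the summand using $\DEG\le \COV+1$, which gives $\ETE \le C(m+1)^{3/5}\le C(m+1)$ with $C = \sqrt{1.5^2+0.62^2}\approx1.63$. The tail $\sum_{m\ge K}$ is then dominated by $\sum_{m \ge K} (m+1)^2 (2/3)^m$, or by $(m+1)^3(2/3)^m$ for the second moment. Both have closed forms from the derivative-of-geometric-series identities \eqref{eq:geometric} with $x=2/3$.

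I would pick $K$ so that each tail is below $10^{-3}$; this should land around $K\approx 40$–$50$, since the ratio is $2/3$ rather than $1/2$. I would then evaluate the finite double sums numerically, expecting $\approx3.08$ and a second moment giving variance $\approx1.56$. As before, $\mathbb{E}(\ETE)$ must be computed to extra precision so that $(\mathbb{E}\ETE)^2$ has error below $10^{-3}$.
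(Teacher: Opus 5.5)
Your proposal is correct and follows essentially the paper's route. You obtain \DEG, \UNP, \COV, \LEN\ by specializing $M_p(u,v)$ at $u=1$, $v=1$, $u=v$ and $v=u^2$. For \ETE, you expand $M_p$ into the explicit joint mass function and truncate the moment sums using derivative-of-geometric-series tail bounds, as the paper does. The differences are minor. Your tail bound groups terms by $\COV=m$ and uses the exact $\NB(2,1/3)$ level weights, which is sharper than the paper's $\binom{i+j-1}{i}\le 2^{i+j-1}$ estimate, so smaller cutoffs suffice. You also get $\mathrm{Var}(\LEN)$ from $\mathrm{Cov}(\DEG,\UNP)=2$ rather than from $L''(1)+L'(1)-L'(1)^2$.
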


\begin{proof}
    With the limiting bivariate GF from \Cref{lem:MotzkinPaths}, the distributions for \DEG, \UNP, \COV, and \LEN\ are easily recovered.  For \DEG, note that substituting $1$ into $u$ eliminates tracking \UNP\ by combining all terms with the same value of \DEG\ but different values of \UNP\ in the bivariate sum.  Hence, \DEG\ has limiting PGF $M_p(1, v) = v/(v-2)^2$. The negative binomial distribution $\NB(2, 1/2)$ has PGF $1/(v-2)^2$, so the additional $v$ in the numerator shifts the distribution up by one.  Similarly, \UNP\ has limiting PGF $M_p(u, 1) = 1/(u-2)^2$, corresponding exactly to $\NB(2, 1/2)$.

    For $\COV = \DEG + \UNP - 1$, we first look for the PGF for $\DEG + \UNP$, obtained by treating \DEG\ and \UNP\ the same in the bivariate GF.  Equivalently, we make the substitution $u = v$, which means that the coefficient $[u^k]M_p(u, u)$ is the sum of all coefficients $[u^m v^n] M_p(u, v)$ where $m + n = k$.  This yields $M_p(u, u) = u/(2u-3)^2$, the PGF for $1 + \NB(2, 1/3)$.  We subtract one from this distribution to find \COV.

    Next, for \LEN, to account for the fact that the degree adds twice as much as any unpaired base, we make the substitution $v = u^2$ to obtain $L(u) := M_p(u, u^2)$.  This yields the limiting PGF in the statement.  To find the mean, compute $L'(1)$, and to find the variance, compute $L''(1) + L'(1) - (L'(1)^2).$

    No substitution will give the limiting PGF for \ETE, but we can still compute statistics on this distribution to arbitrary precision by first identifying the joint distribution of \UNP\ and \DEG.  This is a generalization of the proof of the mean and variance of \ETE\ for the Dyck path case, and we give details in the appendix, \Cref{sec:MotzkinETE}.

\end{proof}

\subsection{End-proximity in the Pfold stochastic context-free grammar} \label{sec:Pfold}

Successfully used to predict \rna\ secondary structures, stochastic context-free grammars assign a distribution to dot-bracket sequences $\mathcal{L}$.  Grammars are defined via a set of recursive rules that begin with a specified start symbol, $S$, and evolve until only the terminal symbols of parentheses and dots remain.  The Pfold grammar~\cite{Knudsen:2003} is a particularly successful grammar that uses relatively few rules to achieve a high prediction accuracy~\cite{Dowell:2004}.  This grammar is defined with the following rules, where each rule is chosen with a corresponding probability $p_i$ or $q_i$ with $p_i + q_i = 1$:
\[
[1]\ S \to LS\ (p_1)\ \big|\ L\ (q_1) \hspace{.4 in}[2]\ L \to (F) \ (p_2)\ \big| \ . \ (q_2) \hspace{.4 in} [3]\ F \to (F)\ (p_3) \ \big|\ LS\ (q_3)
\]
Here, the parameters $p_i$ and $q_i$ can be chosen to fit different families of \RNA, but default values are given in the original paper~\cite{Knudsen:2003}.

In~\cite{Poznanovic:2014}, the authors found that in the output of the Pfold grammar, many common structural features of \RNA\ are normally distributed.  Curiously, the distributions of some features were connected in ways that are independent of the parameters of the grammar.  In contrast, we identify that even in the Pfold grammar, the measures of end-proximity have limiting distributions that are again negative binomials.  This is again due to a factorization of the discriminant of the corresponding GF.  As in the Dyck and Motzkin analyses above, the singularities of the corresponding univariate GF play an important role.  In~\cite{Poznanovic:2014}, the authors studied this very GF, so we cite those results when necessary here.  

\begin{proposition}
\label{prop:Pfold}
    Define $\rho$ as the unique smallest positive solution to $R(z) = (1 - p_1q_2z)^2(1-p_3z^2) - 4p_2q_1q_2q_3z^3$, and define $\delta = p_1q_2\rho$.  Consider the distribution assigned by the Pfold grammar on dot-bracket sequences $\mathcal{L}$ as their length $n \to \infty$.  Then, the joint limiting distribution of \DEG\ and \UNP\ is given by the bivariate PGF,
    \[
    \frac{(1-\delta)^2v}{(1 + \delta + (\delta^2 - \delta)v - (\delta^2 + \delta)u)^2}.
    \]
    From this, the limiting distributions of \DEG, \UNP, \COV, \LEN, and \ETE\ are as in \Cref{table:MainProp}.
\end{proposition}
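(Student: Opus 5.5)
We will mirror the proof of \Cref{lem:MotzkinPaths}: first build the trivariate generating function $S(z,u,v)$ for the Pfold output, where $z$ marks length, $u$ marks \UNP\ and $v$ marks \DEG. Then we rationalize it into the form of Proposition~\ref{prop:GF} and read off the limiting PGF.

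\textbf{Building $S(z,u,v)$.} Rule $[1]$ says the exterior loop is a nonempty sequence of $L$'s. Every $L$ except the last carries weight $p_1$, and the last carries $q_1$. At the top level, an $L$ is either an unpaired base, with weight $q_2zu$, or a closed helix $(F)$, with weight $p_2z^2vF(z)$. Here $F(z)$ is the univariate GF of the nonterminal $F$, and the interior is not marked. Hence
\[
S(z,u,v)=\frac{q_1W}{1-p_1W},\qquad W=q_2zu+p_2vz^2F(z).
\]
The univariate system is
\[
F=p_3z^2F+q_3L\,S(z,1,1),\qquad L=q_2z+p_2z^2F,\qquad S(z,1,1)=q_1L/(1-p_1L).
\]
It gives a quadratic for $F$ of the form $F=A(z)+B(z)\sqrt{R(z)}$ with $A,B$ rational. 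The discriminant is (up to a nonvanishing factor) exactly the $R(z)$ in the statement, and I would cite \cite{Poznanovic:2014} for two facts: $\rho$ is the unique dominant singularity, and $R'(\rho)\neq 0$.

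\textbf{Rationalizing and applying Proposition~\ref{prop:GF}.} Write $1-p_1W=X-Y\sqrt{R}$, where
\[
X=1-p_1q_2zu-p_1p_2z^2vA,\qquad Y=p_1p_2z^2vB.
\]
Multiplying numerator and denominator by $X+Y\sqrt R$ puts $S$ in the form of Proposition~\ref{prop:GF}. The denominator is $a=X^2-Y^2R$, and the coefficient of $\sqrt R$ in the numerator is proportional to $vB(z)$ times a factor with no $u,v$ dependence. So $g(\rho,u,v)/g(\rho,1,1)=v$ after clearing constants, and $a(\rho,u,v)=X(\rho,u,v)^2$ because the $R$ term vanishes at $\rho$. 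The hypotheses are checked as for Motzkin paths: for $|z|\le\rho$ and small $u,v$, $X$ is bounded away from $0$, so no competing singularity appears. The limiting PGF is therefore
\[
\frac{X(\rho,1,1)^2\,v}{X(\rho,u,v)^2},\qquad X(\rho,u,v)=1-\delta u-p_1p_2\rho^2F(\rho)\,v,
\]
using $\delta=p_1q_2\rho$ and $F(\rho)=A(\rho)$.

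\textbf{Main obstacle: evaluating $F(\rho)$.} Matching the target form $1+\delta-(\delta^2+\delta)u+(\delta^2-\delta)v$ after dividing by $1+\delta$ requires
\[
p_1p_2\rho^2F(\rho)=\frac{\delta(1-\delta)}{1+\delta}.
\]
I expect this identity to be the hardest step. I would derive it from the double-root condition at $\rho$: the quadratic $Q(F,z)=0$ for $F$ has $Q=\partial_FQ=0$ there. Setting $x=p_2\rho^2F(\rho)$, I would eliminate $p_3\rho^2$ between these two equations and use $R(\rho)=0$ to express the remaining constants through $\delta$, aiming to show $x=(1-\delta)\delta/((1+\delta)p_1)$. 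If direct elimination is messy, I would instead check that normalization ($u=v=1$ giving total mass $1$) is automatic and verify the identity symbolically in SageMath first.

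\textbf{Consequences.} The table entries then follow by the same substitutions as in \Cref{cor:Motzkin}:
\begin{itemize}
\item $u=1$ gives \DEG, namely $v\,(1-\delta)^2/(1+\delta)^2$ divided by $(1-\tfrac{\delta}{1+\delta}v)^2\cdot(1-\delta)^2/(1+\delta)^2$, i.e.\ $1+\NB(2,(1+\delta)^{-1})$;
\item $v=1$ gives \UNP;
\item $u=v$, followed by a shift down by one, gives \COV;
\item $v=u^2$ gives \LEN.
\end{itemize}
Means and variances come from $h'(1)$ and $h''(1)+h'(1)-h'(1)^2$. For \ETE, expand the bivariate PGF into explicit joint probabilities and bound the tails as in \Cref{cor:DyckETE} and the appendix, now with the default parameter values.
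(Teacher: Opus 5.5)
Your proposal is correct, and the pipeline is the same as the paper's (tag the exterior loop, then apply Proposition~\ref{prop:GF}), but the key algebraic step is done differently.

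\textbf{How the paper does it.} The paper introduces $S_{\ex},L_{\ex}$ and obtains the minimal polynomial of $S_{\ex}$ by a Gr\"obner basis computation. It reads off $g=q_1v$ in front of $\sqrt{(1-p_3z^2)R(z)}$. It sees that $a(\rho,u,v)$ is the square of a linear form in $u,v$ only after finding, partly by hand and partly in SageMath, a representation $4q_2z\,a(z,u,v)=(\cdots)^2(p_3z^2-1)+C(z,u,v)R(z)$.

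\textbf{How you do it.} You solve the sequence construction explicitly as $S_{\ex}=q_1W/(1-p_1W)$, with $W$ linear in the univariate $F$. The conjugate rationalization is then transparent. The square-root coefficient collapses to $q_1p_2z^2vB(z)$ because $p_1(q_2zu+p_2vz^2A)+X=1$. Moreover, $a(\rho,u,v)=X(\rho,u,v)^2$ is a perfect square for free. This explains the $r=2$ negative binomial structurally, as the factor $(1-p_1W)^{-2}$ from differentiating a sequence, and reduces everything to the single value $F(\rho)$.

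\textbf{The step you flagged closes quickly.} Write the univariate system as a quadratic in $L=q_2z+p_2z^2F$:
$[p_1(1-p_3z^2)+p_2q_1q_3z^2]L^2-(1-p_3z^2)(1+p_1q_2z)L+(1-p_3z^2)q_2z=0$.
\begin{enumerate}
\item The double root at $\rho$ is $L(\rho)=(1-p_3\rho^2)(1+\delta)/\bigl(2[p_1(1-p_3\rho^2)+p_2q_1q_3\rho^2]\bigr)$.
\item The condition $R(\rho)=0$ reads $(1-p_3\rho^2)(1-\delta)^2=4p_2q_1q_2q_3\rho^3$. Substituting it turns the double-root formula into $p_1L(\rho)=2\delta/(1+\delta)$.
\item This is exactly $p_1p_2\rho^2F(\rho)=\delta(1-\delta)/(1+\delta)$.
\end{enumerate}

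\textbf{What each route buys.} The paper's route is mechanical and carries over unchanged to the more elaborate tagged grammars used later (e.g.\ for \HEL). Yours is checkable by hand and needs no unspecified cofactor $C$.

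\textbf{Three small repairs.}
\begin{enumerate}
\item The discriminant of the quadratic is $(1-p_3z^2)R(z)$, and $1-p_3z^2$ is not a square, so your $B$ is not rational. Take $c(z)=(1-p_3z^2)R(z)$ in Proposition~\ref{prop:GF}, as the paper does. This is harmless since $\rho<1/\sqrt{p_3}$.
\item $z^2A$ and $z^2B$, hence $X$ and $Y$, have poles at the zeros of $p_1-(p_1p_3-p_2q_1q_3)z^2$. Your claim that $X$ stays bounded away from $0$ on $|z|\le\rho$ needs these poles to lie outside the disk. That is precisely the paper's check locating the zeros of $a(z,0,0)$ at modulus $\sqrt{p_1/(p_1p_3-p_2q_1q_3)}>1/\sqrt{p_3}>\rho$.
\item In your \DEG\ line, the denominator after setting $u=1$ is $(1-\delta)^2\bigl(1-\tfrac{\delta}{1+\delta}v\bigr)^2$, without the extra factor $(1+\delta)^{-2}$. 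The conclusion $1+\NB(2,(1+\delta)^{-1})$ is unaffected.
\end{enumerate}
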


\begin{proof}

We first need to rewrite the Pfold grammar so that \DEG\ and \UNP\ can be more easily tracked.  To do this, we add two additional symbols, $S_{\ex}$ and $L_{\ex}$, corresponding to when the grammar is still producing part of the exterior loop.  Then, $S_{\ex}$ becomes the new start symbol for the grammar, and we can mark when the grammar transitions from the exterior loop to other nucleotides by dropping the ``$\ex$'' tag.  We add the following two rules:
\[
[1']\ S_{\ex} \to\ L_{\ex}S_{\ex} \ (p_1)\ \big|\ L_{\ex} \ (q_1) \hspace{0.4 in} [2']\ L_{\ex} \to (F) \ (p_2) \ \big|\ .\ (q_2)
\]
Importantly, these rules still use the same probabilities as rules $[1]$ and $[2]$ in the original grammar so that the distribution of structures remains unchanged.

By using the Chomsky-Sch\"{u}tzenberger theorem, we can convert this recursive definition of the grammar's symbols into a system of equations for the trivariate generating function $S_{\ex}(z, u, v) = \sum p(n, k, \ell) z^n u^k v^\ell$ where $p(n, k, \ell)$ represents the probability that the grammar outputs a structure of length $n$ with $k$ unpaired nucleotides and degree $\ell$:
\[
[1] \ S(z, u, v) = p_1 LS + q_1L \hspace{.2 in} [2]\ L(z, u, v) = p_2z^2F + q_2z \hspace{.2 in} [3]\ F(z, u, v) = p_3z^2F + q_3LS
\]
\[
[1']\ S_{\ex}(z, u, v) = p_1 L_{\ex}S_{\ex} + q_1L_{\ex} \hspace{.4 in} [2']\ L_{\ex}(z, u, v) = p_2vz^2F + q_2uz
\]
The functions $S, L, F,$ and $F_{\ex}$ are auxiliary GFs that correspond to the output of the grammar had it started with the corresponding symbol instead of $S_{\ex}$, but they are useful in solving for $S_{\ex}$.  One way to find the minimal polynomial for $S_{\ex}$ is to use Gr\"{o}bner bases, as illustrated in our companion {\tt SageMath} worksheet.  Overall, we obtain a function of the form
\begin{equation}
S_{\ex}(z, u, v) = \frac{-b(z, u, v) - q_1v\sqrt{(1-z^2p_3)R(z)}}{2a(z, u, v)} \label{eq:Sext}
\end{equation}
Recall that $R(z) = (1 - p_1q_2z)^2(1-p_3z^2) - 4p_2q_1q_2q_3z^3$, which is the function determining the singularities of the univariate GF tracking only the length of the output of the grammar. From \cite[Lemma 3.3]{Poznanovic:2014}, the unique minimal zero $\rho$ of $R(z)$ has $\rho < 1/\sqrt{p_3}$, so $z = \rho$ is indeed the minimal singularity of the function underneath the square root.

There are two technically challenging components remaining to finish deriving the limiting PGF.  First, we must show that $z = \rho$ is the unique minimal singularity for $S_{\ex}(z, u, v)$ for all $u$ and $v$ sufficiently small.  Then, the expression for $a(z, u, v)$ is exceedingly messy.  To recognize the limiting distribution as related to negative binomials, we must find a way to rewrite it.

For the first challenge, we find that the singularities of $S_{\ex}(z, u, v)$ either come from the zeroes of $a(z, u, v)$ or $(1-z^2p_3)R(z)$.  Having analyzed the latter already, we look at the zeroes of $a$ when $u$ and $v$ are small.  The zeroes of $a(z, 0, 0)$ are of magnitude $z = \sqrt{p_1/(p_1p_3 - p_2q_1q_3)}$, as verified in the {\tt SageMath} worksheet.  It is easy to see that $\sqrt{1/p_3}$ is strictly smaller than this, and \cite[Lemma 3.3]{Poznanovic:2014} again verifies that $\rho < \sqrt{1/p_3}$.  Hence, $|a(z, 0, 0)|$ is bounded away from zero for all $|z| \leq \rho$.  Because $a(z, u, v)$ is a polynomial, for $u$ and $v$ sufficiently small, $|a(z, u, v)|$ is also bounded away from zero for all $|z| \leq \rho$.  Hence, we have satisfied the conditions of Proposition~\ref{prop:GF} and can compute the limiting probability distribution.

Now, to rewrite $a(z, u, v)$ in a more convenient form, it is useful to note that $a(z, u, v)$ will only be evaluated when $z = \rho$.  So, we are free to add and subtract multiples of $R(z)$ without changing the output of the proposition.  By matching terms by hand, we find:
\[
4q_2z \cdot a(z, u, v) = (p_1^2q_2^2uz^2 - p_1^2q_2^2vz^2 + p_1q_2uz + p_1q2vz - p_1q_2z - 1)^2(p_3z^2 - 1) + C(z, u, v)R(z)
\]
for an appropriate choice of $C(z, u, v)$ given in the {\tt SageMath} worksheet.  Substituting this form of $a(z, u, v)$ into Proposition~\ref{prop:GF} with $g(u, v) = q_1v$ yields the limiting PGF given in the proposition statement.

The default values of the $p_i$ are given in the original paper~\cite{Knudsen:2003} as $p1 = 0.868534, p2 = 0.105397,$ and $p3 = 0.787640$.  Following the same steps as for the Dyck and Motzkin paths, we can find the distributions of each parameter in the proposition with appropriate substitutions or series expansions of this PGF.
\end{proof}

\section{Exterior helices} \label{sec:Helices}

\begin{table}[h]
\centering
\begin{tabular}{cl}
\toprule
Parameter & Definition\\
\cmidrule(l){1-2}
\HEL & Number of consecutive pairings in the first set of pairings;\\
&length of the first helix in Motzkin paths and Pfold grammar\\
\STM & Number of nucleotides in the first stem of a secondary structure\\
\bottomrule
\end{tabular}
\caption{Parameters measuring the length of the first helix or stem in a secondary structure.}
\label{table:ExteriorHelixParameters}
\end{table}

As we will see in \Cref{sec:Data}, our analysis of databases of \rna\ secondary structures reveals that many \rna\ structures have low degree even compared to our models.  In particular, some well-structured families like 5S r\rna\ and t\RNA\ generally have degree one.  This led us to consider the length of this helix as a proxy for the pairing propensity of the ends of the sequence.

In general, we focus on the first helix in a secondary structure when reading from the $5'$ end to the $3'$ end of the strand.  Let \FirstHelix\ represent the length of the helix with a nucleotide closest to the $5'$ end of the strand.

In \Cref{sec:DyckHelices} below, we investigate the length distribution of exterior helices under the branching models defined by Dyck paths and Motzkin paths.

\subsection{Closing helix length in Dyck and Motzkin paths}
\label{sec:DyckHelices}
In terms of Dyck and Motzkin paths, \HEL\ corresponds to the height of its lowest valley or plateau.
We refer to this parameter as the \emph{depth} of a mountain.  For Dyck paths and Motzkin paths, we may rely on Proposition~\ref{prop:GF} again to discover the asymptotic distribution of depth.  In both cases, the resulting distributions are geometric, or equivalently, a negative binomial with only one coin flip.

\begin{figure}
\begin{center}
    \includegraphics[width=\textwidth]{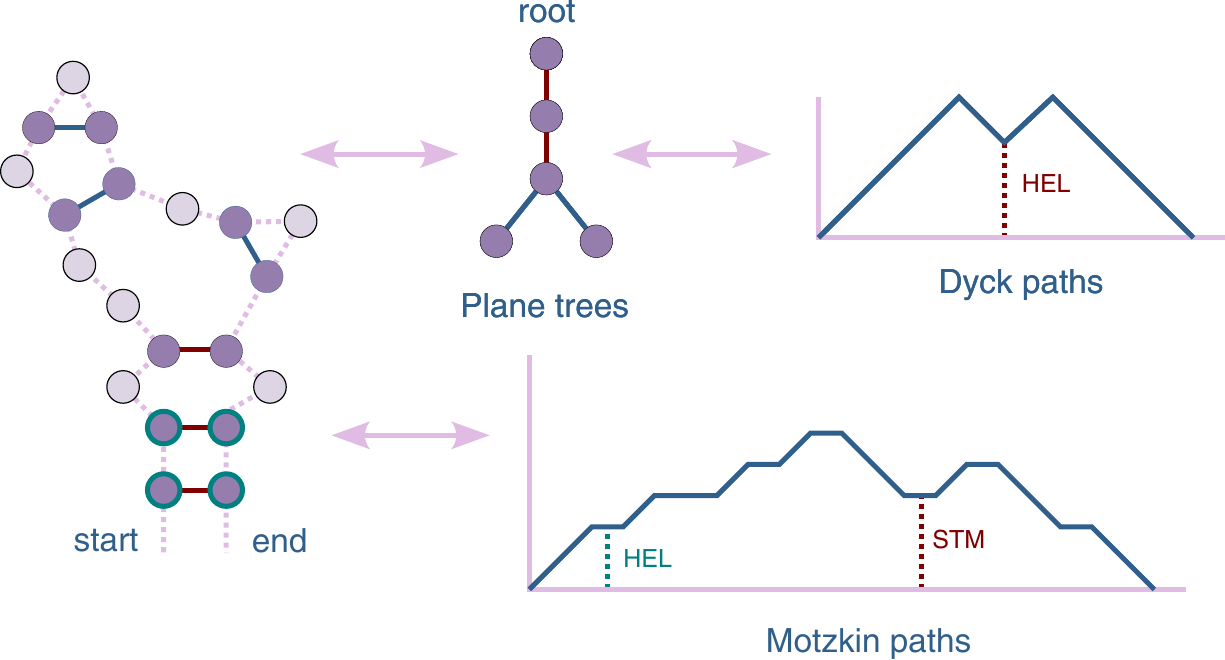}
\end{center}
    \caption{The length of the first helix in an \RNA\ secondary structure is equal to the depth of the corresponding Motzkin path or Dyck path, and the length of the first stem corresponds to the lowest valley or plateau excluding the starting and ending sides of the mountain.}
    \label{fig:MountainHeight}
\end{figure}

Two base pairs $(i,j)$ and $(k,l)$ with $i < k$ belong to the same stem if
$i < k < l < j$ and there is no base pair $(a,b)$ with $i < a < b < k$ 
or $l < a < b < j$.  The length of a stem is the total number of base pairs in it.  In terms of Motzkin paths, the first stem is the lowest occurrence of a down step followed by an up step.  Equivalently, it is the lowest valley or plateau excluding the two sides of the first mountain. 

Note that for the plane tree representation of secondary structures that corresponds to Dyck paths, each pair of parentheses corresponds to a helix (unlike in Motzkin paths where each pair corresponds to a single base pair).  Thus, for Dyck paths, \HEL\ counts the number of helices in the first stem of a secondary structure.

\begin{proposition} \label{prop:DyckMotzkinDepth}
Consider the uniform distribution over all Dyck or Motzkin paths of length $n$ as $n \to \infty$.
\begin{enumerate}
    \item For Dyck paths, \FirstHelix\ tends to $1 + \NB(1, 3/4)$ with mean $4/3$ and variance $4/9$.
    \item For Motzkin paths, \FirstHelix\ tends to $1 + \NB(1, 8/9)$ with mean $9/8$ and variance $9/64$.
    \item For Motzkin paths, the number of helices in the first stem tends to $1 + \NB(1, 27/32)$ with mean $32/27$ and variance $160/729$.
    \item For Motzkin paths, \STM\ tends to $1 + \NB(1, 3/4)$ with mean $4/3$ and variance $4/9$.
\end{enumerate}
\end{proposition}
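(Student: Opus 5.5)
The plan is to use a first-mountain decomposition for each of the four parameters, write down the resulting bivariate GF, and feed it into Proposition~\ref{prop:GF}. As in Lemma~\ref{lem:DyckDegree}, the square root will come only from the univariate GF. So the limit law will be read off from how the $u$-dependent prefactor of the square-root term behaves at the dominant singularity.

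\textbf{Dyck paths.} Let $D=D(z,1)=(1-\sqrt{1-4z})/(2z)$ and let $P=zD$ be the GF of primitive paths (single mountains).
\begin{itemize}
\item A nonempty path factors uniquely as (first mountain)$\cdot D$.
\item A first mountain of depth exactly $k$ has the form $U^k Q D^k$. Here $Q$ is any Dyck path that is \emph{not} primitive, because otherwise the depth would exceed $k$. Its GF is $Q(z)=D-zD=(1-z)D$.
\item Marking the depth by $u$ gives
\[
F(z,u)=1+\frac{uz}{1-uz}\,(1-z)\,D^2 .
\]
\end{itemize}
Using $D^2=(D-1)/z$, the square-root part of $F$ near $z=1/4$ is $\frac{u(1-z)}{1-uz}$ times a $u$-free multiple of $\sqrt{1-4z}$. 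For small $u$ the factor $1-uz$ has no zero with $|z|\le 1/4$, so $z=1/4$ is the only dominant singularity and the hypotheses of Proposition~\ref{prop:GF} hold. Normalizing at $u=1$, the limit PGF is
\[
\frac{(3/4)\,u}{1-u/4},
\]
which is the PGF of $1+\NB(1,3/4)$. The mean and variance then follow from the negative binomial formulas.

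\textbf{Motzkin paths, $\HEL$.} The same argument applies with $M=M(z,1,1)$ and dominant singularity $z=1/3$.
\begin{itemize}
\item A primitive mountain is $z^2M$, so the admissible interior of a depth-$k$ mountain has GF $(1-z^2)M$.
\item Each level of depth contributes $uz^2$, so the mountain factor is $uz^2/(1-uz^2)$.
\end{itemize}
At $z=1/3$ the prefactor ratio is $\frac{(8/9)u}{1-u/9}$, which is the PGF of $1+\NB(1,8/9)$.

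\textbf{Motzkin paths, stems.} Here a stem level may be padded by horizontal steps, so a level has the form $U\,(\text{dots})\,[\cdots]\,(\text{dots})\,D$. The stem continues precisely when the interior, with its flanking dots stripped, is a single mountain.
\begin{itemize}
\item For $\STM$, each base pair of the stem contributes $uz^2/(1-z)^2$. The terminal interior is anything not of the form $\text{SEQ}(\cdot)\,(\text{mountain})\,\text{SEQ}(\cdot)$.
\item For the number of helices in the first stem, mark instead each transition at which a nonempty bulge occurs. Grouping the consecutive pairs between bulges gives a geometric series in $u$ whose ratio at $z=1/3$ is $5/32$.
\end{itemize}
In each case I evaluate the geometric ratio at $z=\rho=1/3$ and read off $1+\NB(1,p)$, expecting $p=3/4$ and $p=27/32$ respectively. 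For example, the $\STM$ ratio is $z^2/(1-z)^2=1/4$ at $z=1/3$, which gives $p=3/4$.

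\textbf{Main obstacle.} I expect the hard part to be getting the stem specifications exactly right. Bulges on one side versus both sides, and interior loops versus helices, must be counted correctly so that the stem and helix-count parameters are marked without double counting. The analytic side is routine: the denominators $1-uz^2$ and $1-uz^2/(1-z)^2$ do not vanish for $|z|\le 1/3$ and small $u$.
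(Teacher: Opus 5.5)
Your proposal is correct and is essentially the paper's proof: your non-primitive interiors $(1-z)D$ and $(1-z^2)M$ are exactly the paper's auxiliary $H$ recursions, and your stem decomposition is the paper's $M_f, M_*, M, L$ grammar (with weight $L^2-1$ marking a new helix). Reading off the normalized $u$-prefactor of the square-root term at $z=\rho$ is just Proposition~\ref{prop:GF} done in closed form rather than via Gr\"obner bases. One bookkeeping fix for \STM: the outermost pair should contribute $uz^2$ and only the inner pairs $uz^2/(1-z)^2$, since the dots inside the innermost pair belong to the terminal interior and would otherwise be double counted; this changes only a $u$-free factor, so the limit $3u/(4-u)$ is unaffected.
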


\begin{proof}
    Our goal is to find a recurrence relation for each GF that tracks the corresponding depth of the paths.

    Let $D(z,y)$ be the GF which tracks Dyck paths by length ($z$) and height ($y$), where length is again the number of pairs of parentheses.  In order to find the recurrence for $D$, will need to define an auxiliary GF $H(x, y)$, described momentarily.  We begin with the standard Dyck path recurrence relation, modified slightly to include this helper GF:
    \[
    D(z, y) = 1 + [zy H(z,y)] D(z,1) 
    \]
    Overall, this recurrence is motivated by decomposing a Dyck path according to its first return to the $x$-axis, as in \Cref{fig:Dyck}.  A Dyck path is either empty (contributing a $1$), or it can be broken into a pair of parentheses enclosing a Dyck path, $(zy)$, the Dyck path in between the parentheses $(H(z, y))$, followed by another Dyck path $(D(z, 1))$.

    \begin{figure}
        \begin{center}
        \includegraphics[width=0.5\textwidth]{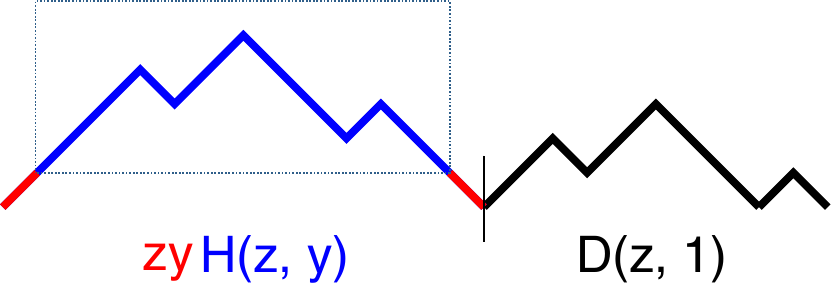}
        \caption{A decomposition of Dyck paths according to the first return to the line $y = 0$.  The Dyck path within the box adds to the length of the first helix only if its first and last position are paired.} \label{fig:Dyck}
        \end{center}
    \end{figure}
    Now, we explain why the terms are $H(z, y)$ and $D(z, 1)$: for $D(z, 1)$, this is the univariate generating function for all Dyck paths by length, which we include because the Dyck path at the end of the recurrence cannot contribute to the depth of the first mountain.  Then, we define $H(z,y)$ to count all Dyck paths by length $(z)$ and depth $(y)$, except Dyck paths with 2+ mountains (or with zero) are now counted by coefficients of $z^i y^0$.  This is necessary because the depth of a path with 2+ mountains will be $1$ after the path is enclosed by the set of parentheses in the recurrence, and should be counted in the overall generating function $D(z, y)$ by the coefficient of $z^{i + 1} y^{0 + 1}$.  In other words, $H(z, y)$ only assigns positive depth to Dyck paths whose first step is paired with its last step.  We obtain the recurrence,
    \[
    H(z,y) = (1-z) D(z,1) + zy H(z,y),
    \]
    where the $(1-z)D(z, 1)$ term counts Dyck paths whose first step is not paired with its last step, and the $zyH(z, y)$ term counts the rest.

    Solving this system reveals that the GF $D(z, y)$ is of the form required in Proposition~\ref{prop:GF} with $g(y) = y$, $a(z, y) = y(z^2 - 1)$, and $\rho = 1/4$.  Plugging into the proposition yields the limiting distribution encoded by $3y/(4-y)$, which is the PGF for the geometric random variable with parameter $p = 3/4$.
   
    Paralleling the changes for the Dyck path specification, let $M(z, y)$ be the bivariate GF enumerating Motzkin paths by length ($z$, now counting individual nucleotides) and by depth of the first left-to-right mountain $(y)$.  Also, let $H(z, y)$ be the equivalently defined helper function.  Then, the following specification holds:
    \begin{align*}
    M(z, y) &= 1 + zM + z^2yH(z, y)M(z, 1)\\
    H &= z^2yH(z, y) + (1-z^2)M(z, 1)
    \end{align*}
    Solving for $M(z, y)$ again reveals a GF fitting the requirements of Proposition~\ref{prop:GF}, this time with $g(y) = y$, $a(z, y) = z^2(yz^3 - yz^2 - z + 1)$, and $\rho = 1/3$.  The proposition gives the limiting GF $(8y/9)/(1-y/9)$,
    which again corresponds to a geometric random variable, this time with parameter $p = 8/9$.

    For the length and number of components of the stem in a Motzkin path, we develop two grammars that output all dot-bracket sequences exactly once but allow us to track additions to the stem.  First, for \STM, we have the start symbol $M_f$ corresponding to when the grammar has yet to start generating the first stem.  Then, $M_*$ is the symbol corresponding to the first stem being in the middle of production, and $M$ is the symbol for when the grammar has moved past producing the stem.  Additionally, we have a symbol $L$ that corresponds to adding any number of consecutive dots.  We obtain:
    \begin{align*}
        M_f &\to \emptyset\ \big|\ .M_f\ \big|\ (M_*)M\\
        M_* &\to L\ \big|\ L(M_*)L\ \big|\ L(M)L(M)M\\
        M &\to \emptyset \ \big|\ .M \ \big|\ (M)M\\
        L &\to \emptyset \ \big|\ .L
    \end{align*}
    Here, $\emptyset$ represents outputting nothing.  The symbols $M_f$ and $M$ both have production rules modeled after the standard recursion for Motzkin paths.  In contrast, $M_*$ has rules broken into whether the output has no mountains $(L)$, one mountain $(L(M_*)L)$, or more than one mountain, which breaks the stem.  This converts to the system of equations
    \begin{align*}
        M_f &= 1 + zM_f + z^2uM_*M,\\
        M_* &= L + z^2uL^2M_* + z^4L^2M^3\\
        M&=1 + zM + z^2M^2\\
        L &= 1+zL
    \end{align*}
    This is solved via Gr\"{o}bner bases in our {\tt SageMath} worksheet, leading to a generating function where Proposition~\ref{prop:GF} applies.  The limiting GF is $3u/(4-u)$, again corresponding to the PGF for a geometric random variable with $p = 3/4$.

    The grammar needs a slight modification to track the number of helices in the stem.  In the rules for $M_*$, we now need to know more information about the production $M_* \to L(M_*)L$: if both $L$s are empty, a pre-existing helix is growing longer, but if either $L$ is non-empty, a new helix is formed in the stem instead.  This change corresponds to replacing the equation for $M_*$ with
    \[
    M_* = L + z^2M_* + z^2u(2(L-1) + (L-1)^2)M_* + z^4L^2M^3.
    \]
    Every other part of the analysis is the same, and the limiting PGF is now $27u/(32 - 5u)$, corresponding to the geometric random variable with parameter $p = 27/32$.
\end{proof}

In the result above, we note that the distribution of \HEL\ in Dyck paths is the same as the distribution of \STM\ for Motzkin paths.  We could derive the Motzkin result directly from the Dyck path result by noticing that a Motzkin path can be compressed to a Dyck path skeleton by removing all of the unpaired bases.  The number of Motzkin paths of length $n$ with $k$ base pairs that correspond to the same Dyck skeleton depends only on $n$ and $k$, and not the arrangement of the skeleton.  Ultimately, this implies that the distributions should be the same.

Additionally, we note that the results imply an asymptotic independence: the mean number of components in the first stem times the mean length of the first helix is $32/27 \cdot 8/9 = 4/3$, the mean length of the stem.

\subsection{Closing helix length in the Pfold grammar}

The statement for the Pfold grammar is again analogous to Dyck and Motzkin paths.  The primary challenge is finding an equivalent form of the Pfold grammar that allows one to track the length of the first helix.

\begin{proposition} \label{prop:PfoldHelix}
    Define $\rho$ as the unique smallest positive solution to $R(z) = (1 - p_1q_2z)^2(1-p_3z^2) - 4p_2q_1q_2q_3z^3$.  In the Pfold grammar, \HEL\ has a limiting distribution of $1 + \NB(1, 1-p_3\rho^2)$.
\end{proposition}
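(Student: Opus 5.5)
We plan to follow the same pattern as in the proof of Proposition~\ref{prop:Pfold}. First we rewrite the Pfold grammar so that it produces the same distribution on $\mathcal{L}$ and marks every base pair of the first helix with a variable $y$. Then we solve for the trivariate GF and apply Proposition~\ref{prop:GF}.

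\textbf{The marked grammar.} The key structural observation concerns when the first helix grows. Once the first exterior $L$ produces $(F)$, the helix is extended by exactly one pair each time $F \to (F)$ fires, with probability $p_3$. The helix stops as soon as $F \to LS$ fires. This is because $S$ always produces at least one $L$, so $LS$ has at least two components, and any pair generated inside it cannot be stacked on the enclosing pair. We therefore introduce marked symbols $S_f$, $L_f$ and $F_h$. Here $S_f$ is the start symbol, used while no base pair has yet been produced in the exterior loop, and $F_h$ is used while the first helix is still growing:
\[
S_f \to L_f S_f \ (p_1) \,\big|\, L_f \ (q_1), \qquad L_f \to . \ (q_2) \,\big|\, (F_h) \ (p_2), \qquad F_h \to (F_h)\ (p_3) \,\big|\, LS\ (q_3).
\]
There is one subtlety: after $L_f$ produces a pair, the remaining tail must revert to the unmarked $S$. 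We will handle this directly at the level of GFs. Writing $S, L, F$ for the univariate Pfold GFs from the proof of Proposition~\ref{prop:Pfold}, we obtain
\[
F_h = \frac{q_3 L S}{1 - p_3 y z^2}, \qquad S_f(1 - p_1 q_2 z) = q_1 q_2 z + p_2 y z^2 F_h\,(p_1 S + q_1).
\]
This gives the closed form
\[
S_f(z,y) = \frac{q_1 q_2 z}{1 - p_1q_2z} + \frac{y}{1 - p_3 y z^2}\cdot \frac{p_2 q_3 z^2 L S (p_1 S + q_1)}{1-p_1q_2 z}.
\]

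\textbf{Applying Proposition~\ref{prop:GF}.} Here the $y$-dependence factors out of a univariate function, which carries all of the square-root singularity at $z=\rho$ coming from $R(z)$. After rationalizing, $S_f$ has the form required by Proposition~\ref{prop:GF}, with the $y$-dependence appearing through $g \propto y$ and $a \propto (1-p_3yz^2)$. The hypotheses to check are the following.
\begin{itemize}
\item The factor $1 - p_3 y z^2$ does not vanish for $|z|\le\rho$ and $y$ small. Indeed, its zeros have modulus $1/\sqrt{p_3 y} > 1/\sqrt{p_3} > \rho$ by \cite[Lemma 3.3]{Poznanovic:2014}.
\item The factor $1 - p_1q_2 z$ does not vanish on $|z| \le \rho$. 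This is equivalent to $\delta = p_1q_2\rho < 1$, which is implicit in Proposition~\ref{prop:Pfold}.
\item The univariate factor $z^2LS(p_1S+q_1)$ has a nonzero square-root coefficient at $\rho$, since it shares its dominant singularity with the univariate Pfold GF.
\end{itemize}
The limiting PGF is then the ratio of the singular coefficients at $y$ and at $y=1$. All $y$-free factors cancel, leaving
\[
\frac{y\,(1-p_3\rho^2)}{1-p_3\rho^2 y},
\]
which is the PGF of $1+\NB(1,1-p_3\rho^2)$. The mean and variance then follow immediately.

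\textbf{Main obstacle.} The step requiring the most care is the correctness of the marked grammar. We must check that it generates each structure with exactly its Pfold probability, and that $\HEL$ equals the number of $F_h \to (F_h)$ firings plus one. This rests on the claim that $F\to LS$ can never extend the stack. If a Gr\"obner-basis derivation, as in the {\tt SageMath} worksheet, were used instead of the closed form above, a second obstacle would be confirming that $\rho$ remains the unique dominant singularity for small $y$. The explicit factorization above avoids that difficulty.
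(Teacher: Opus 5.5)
Your proposal is correct and follows the paper's route. Your equation for $S_f$, in which the tail reverts to the unmarked $S$ after the first exterior pair, is exactly the paper's combined rule $[1'']$, your $F_h$ is the paper's $F_f$ from $[2'']$, and the limit is then read off via Proposition~\ref{prop:GF}. The only difference is that you solve the system explicitly, isolating all $y$-dependence in the analytic factor $y/(1-p_3yz^2)$, instead of computing a minimal polynomial by Gr\"obner bases in the worksheet; this makes both the cancellation to $y(1-p_3\rho^2)/(1-p_3\rho^2 y)$ and the dominant-singularity check transparent.
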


\begin{proof}
    As in Proposition~\ref{prop:Pfold}, we begin with the three production rules $[1], [2],$ and $[3]$, and we add two new symbols $S_f$ and $F_f$ that track when the grammar is still outputting the first helix versus when it has moved past the helix.  So, symbols without the tag `$f$' correspond to productions that cannot impact the first helix.  Then, we add the two rules:
    \[
    [1'']\ S_f \to (F_f)S \ (p_1p_2) \ \big| \ .S_f \ (p_1q_2) \ \big| \ (F_f) \ (q_1p_2) \ \big| \ . \ (q_1q_2)
    \]
    \[
    [2'']\ F_f \to (F_f) \ (p_3) \ \big| \ LS \ (q_3)
    \]
    Rule $[1'']$ was derived by combining rules $[1]$ and $[2]$ in the original grammar, which is necessary in order to track whether the initial symbol $L$ in the rule $S \to LS$ becomes the first helix or whether it becomes a dot, in which case the remaining $S$ may still become the first helix.

    From this point onwards, the analysis pipeline matches that of Proposition~\ref{prop:Pfold}, and details are provided in the {\tt SageMath} worksheet.
\end{proof}

\section{Comparison to known RNA families} \label{sec:Data}

To gain insight into whether our models accurately reflect the end-to-end distances and helix lengths of \rna\ secondary structures, we compare to well-studied families of \rna\ from the ArchiveII database \cite{Sloma:2016}, the Comparative RNA\ Web-2 \cite{Cannone:2002, Chan:2023}, and the lnc\RNA\ and m\RNA\ from \cite{Lai:2018}.  The families we studied are listed in \Cref{table:FamilyInfo}.  Our results by family are summarized in \Cref{table:ArchiveIIStatistics}, \Cref{table:CRWCombinedStatistics}, and \Cref{table:MathewsStatistics}.

For the families of \RNA\ from the ArchiveII database or the Comparative RNA Web-2, we took multiple sequence representatives from each family, while for the m\RNA\ and lnc\RNA\ from \cite{Lai:2018}, we generated samples of 1000 structures for each sequence by using Rsample from the RNAStructure software suite \cite{Reuter:2010}.  

\begin{table}[t]
\centering
\begin{tabular}{c}
  \begin{tabular}{c|ccccccc}
  \toprule
  Family & tRNA & SRP & 5S & RNaseP & tmRNA & 16S & 23S\\
  Median length & 76 & 118 & 119 & 330 & 363 & 1509 & 3021\\
  \#Sequences & 557 & 924 & 1283 & 454 & 462 & 54 & 38\\
  \bottomrule
  \end{tabular}
  \\
  \addlinespace
  \begin{tabular}{c|ccccc}
  \toprule
  Sequence & RPL41A & ATP & HSBP1 & MIF & Bglobin\\
  Length   & 327    & 424 & 535   & 562 & 590\\
  \cmidrule(l){1-6}
  Sequence & MRP51 & GAPDH & FLuciferase & Hotair & Neat1\\
  Length   & 674   & 1327 & 1658        & 2148  & 3734\\
  \bottomrule
  \end{tabular}
\end{tabular}
\caption{The families and sequences studied in our analysis.  In the top block, each family had multiple representatives.  In the m\RNA\ and lnc\RNA\ in the bottom block, we analyzed samples of size $1000$ from each sequence.}
\label{table:FamilyInfo}
\end{table}

For each family, we computed each measurement of end-to-end distance and the length of the first helix and first stem.  Overall, we find that the mean \ETE\ value is often somewhat smaller than our theoretically predicted averages, but the variance is substantially smaller.

To determine whether small \ETE\ values are a generic property of free energy minimization or a special property of \RNA, we then shuffled the sequences using the uShuffle software \cite{Jiang:2008} while maintaining $2$-let counts.  Then, we used RNAfold from the ViennaRNA software suite \cite{Lorenz:2011}.  RNAfold is a minimum free energy (MFE) secondary structure prediction algorithm.  In general, we find that regardless of sequence length or nucleotide composition, the resulting shuffles move the distributions towards our theoretical averages, increasing the variances and means.  We describe specifics by dataset below. 

\begin{table}[h]
\centering
\begin{tabular}{cccccccccccccc}
\toprule
\multicolumn{1}{c}{Seq.} 
  & \multicolumn{2}{c}{DEG}
  & \multicolumn{2}{c}{UNP}
  & \multicolumn{2}{c}{LEN}
  & \multicolumn{2}{c}{HEL}
  & \multicolumn{2}{c}{STM}
  & \multicolumn{2}{c}{ETE}\\
  & $\mu$ & Var & $\mu$ & Var & $\mu$ & Var & $\mu$ & Var & $\mu$ & Var & $\mu$ & Var \\
\cmidrule(l){1-13}

\multicolumn{13}{c}{\textbf{Original distributions}} \\
\cmidrule(l){1-13}

t\RNA      & 1.0 & 0.0 & 4.0 & 0.3 & 6.0 & 0.3 & 6.8 & 0.6 & 6.9 & 0.4 & 2.1 & 0.0 \\
SRP \RNA   & 1.5 & 0.5 & 6.2 & 25.1 & 9.3 & 31.5 & 5.7 & 12.5 & 17.7 & 192.7 & 2.7 & 0.7 \\
5S r\RNA   & 1.0 & 0.0 & 2.0 & 4.2 & 4.0 & 4.5 & 8.2 & 5.5 & 9.1 & 1.8 & 1.8 & 0.1 \\
RNaseP    & 1.2 & 0.4 & 6.4 & 17.2 & 8.8 & 24.8 & 5.4 & 13.5 & 8.5 & 8.4 & 2.5 & 0.6 \\
tm\RNA     & 1.0 & 0.0 & 3.9 & 1.0 & 5.9 & 1.4 & 7.0 & 0.1 & 7.2 & 4.8 & 2.1 & 0.0 \\

\cmidrule(l){1-13}
\multicolumn{13}{c}{\textbf{Pfold theoretical distribution}} \\
\cmidrule(l){1-13}

Pfold & 2.6 & 2.8 & 12.4 & 18.2 & 17.5 & 138.8 & 4.7 & 17.5 & -- & -- & 3.8 & 2.2 \\

\cmidrule(l){1-13}
\multicolumn{13}{c}{\textbf{Shuffled MFE distributions}} \\
\cmidrule(l){1-13}

t\RNA      & 1.7 & 0.5 & 9.9 & 46.7 & 13.4 & 49.3 & 4.3 & 3.3 & 12.2 & 48.4 & 3.3 & 0.8 \\
SRP \RNA   & 1.9 & 0.8 & 9.2 & 35.5 & 13.1 & 42.7 & 4.4 & 3.6 & 12.7 & 82.1 & 3.3 & 0.8 \\
5S r\RNA   & 2.0 & 0.8 & 10.2 & 40.2 & 14.1 & 47.8 & 4.3 & 3.1 & 12.5 & 78.4 & 3.4 & 0.9 \\
RNaseP    & 2.1 & 1.0 & 10.4 & 51.2 & 14.5 & 63.1 & 4.5 & 3.7 & 11.1 & 75.3 & 3.5 & 1.1 \\
tm\RNA     & 2.1 & 1.0 & 11.1 & 57.3 & 15.4 & 69.9 & 4.6 & 4.0 & 11.8 & 84.4 & 3.6 & 1.1 \\

\bottomrule
\end{tabular}
\caption{Distribution statistics for original, Pfold theoretical, and shuffled MFE structures across \RNA\ families from the ArchiveII database.  Each sequence was shuffled 5 times.}
\label{table:ArchiveIIStatistics}
\end{table}
\begin{table}[h]
\centering
\begin{tabular}{cccccccccccccc}
\toprule
\multicolumn{1}{c}{Seq.} 
  & \multicolumn{2}{c}{DEG}
  & \multicolumn{2}{c}{UNP}
  & \multicolumn{2}{c}{LEN}
  & \multicolumn{2}{c}{HEL}
  & \multicolumn{2}{c}{STM}
  & \multicolumn{2}{c}{ETE}\\
  & $\mu$ & Var & $\mu$ & Var &  $\mu$ & Var &  $\mu$ & Var &  $\mu$ & Var &  $\mu$ & Var \\
\cmidrule(l){1-13}

\multicolumn{13}{c}{\textbf{Original distributions}} \\
\cmidrule(l){1-13}

16S & 4.0 & 0.0 & 36.4 & 127.0 & 44.4 & 127.0 & 5.0 & 0.0 & - & - & 6.6 & 0.6 \\
23S & 4.4 & 20.1 & 24.3 & 1235.8 & 33.0 & 1800.8 & 7.3 & 9.0 & - & - & 4.9 & 15.8 \\

\cmidrule(l){1-13}
\multicolumn{13}{c}{\textbf{Pfold theoretical distribution}} \\
\cmidrule(l){1-13}

Pfold & 2.6 & 2.8 & 12.4 & 18.2 & 17.5 & 138.8 & 4.7 & 17.5 & -- & -- & 3.8 & 2.2 \\

\cmidrule(l){1-13}
\multicolumn{13}{c}{\textbf{Shuffled MFE distributions}} \\
\cmidrule(l){1-13}

16S & 2.2 & 1.0 & 11.6 & 54.2 & 16.0 & 65.8 & 4.8 & 4.4 & 12.8 & 77.4 & 3.7 & 1.1 \\
23S & 2.2 & 1.0 & 11.7 & 55.4 & 16.1 & 67.5 & 5.1 & 5.2 & 12.5 & 92.6 & 3.7 & 1.1 \\

\bottomrule
\end{tabular}
\caption{Distribution statistics for original, Pfold theoretical, and shuffled MFE structures for ribosomal \RNA s.  Each sequence was shuffled 5 times.}
\label{table:CRWCombinedStatistics}
\end{table}
\begin{table}[h]
\centering
\begin{tabular}{cccccccccccccc}
\toprule
\multicolumn{1}{c}{Seq.} 
  & \multicolumn{2}{c}{DEG}
  & \multicolumn{2}{c}{UNP}
  & \multicolumn{2}{c}{LEN}
  & \multicolumn{2}{c}{HEL}
  & \multicolumn{2}{c}{STM}
  & \multicolumn{2}{c}{ETE}\\
  & $\mu$ & Var & $\mu$ & Var &  $\mu$ & Var &  $\mu$ & Var &  $\mu$ & Var &  $\mu$ & Var \\
\cmidrule(l){1-13}

\multicolumn{13}{c}{\textbf{Original Rsample distributions}} \\
\cmidrule(l){1-13}

RPL41A & 1.9 & 0.3 & 16.6 & 46.9 & 20.5 & 58.1 & 3.7 & 0.7 & 5.1 & 3.7 & 4.0 & 1.0 \\
ATP    & 2.0 & 0.0 & 4.3 & 1.1 & 8.3 & 1.1 & 4.4 & 0.3 & 10.8 & 1.4 & 2.8 & 0.0 \\
HSBP1  & 1.1 & 0.1 & 7.8 & 6.4 & 10.0 & 8.7 & 6.4 & 1.8 & 6.9 & 3.0 & 2.7 & 0.2 \\
MIF    & 2.0 & 0.0 & 7.5 & 2.3 & 11.5 & 3.0 & 6.6 & 1.3 & 7.3 & 1.6 & 3.2 & 0.1 \\
Bglobin& 3.2 & 0.4 & 18.0 & 17.9 & 24.3 & 20.3 & 3.7 & 0.4 & 8.4 & 2.8 & 4.8 & 0.3 \\
MRP51  & 1.0 & 0.0 & 7.6 & 2.6 & 9.7 & 3.1 & 3.9 & 0.8 & 13.2 & 1.5 & 2.6 & 0.1 \\
GAPDH  & 2.2 & 2.2 & 7.4 & 45.3 & 11.8 & 83.6 & 3.1 & 2.9 & 5.8 & 3.3 & 3.1 & 2.1 \\
FLucif. & 1.9 & 0.7 & 12.3 & 65.9 & 16.2 & 86.0 & 4.0 & 3.2 & 5.8 & 9.9 & 3.6 & 1.3 \\
Hotair & 3.9 & 0.4 & 13.5 & 11.8 & 21.2 & 18.0 & 2.8 & 0.8 & 19.9 & 16.3 & 4.7 & 0.4 \\
Neat1  & 2.3 & 0.5 & 15.7 & 52.9 & 20.4 & 51.0 & 6.3 & 1.9 & 9.4 & 9.2 & 4.2 & 0.6 \\

\cmidrule(l){1-13}
\multicolumn{13}{c}{\textbf{Pfold theoretical distribution}} \\
\cmidrule(l){1-13}

Pfold & 2.6 & 2.8 & 12.4 & 18.2 & 17.5 & 138.8 & 4.7 & 17.5 & -- & -- & 3.8 & 2.2 \\

\cmidrule(l){1-13}
\multicolumn{13}{c}{\textbf{Shuffled MFE distributions}} \\
\cmidrule(l){1-13}

RPL41A & 2.1 & 1.0 & 13.4 & 73.4 & 17.6 & 87.7 & 5.0 & 3.9 & 12.6 & 89.1 & 3.8 & 1.3 \\
ATP    & 2.2 & 1.1 & 11.7 & 70.5 & 16.0 & 83.9 & 4.4 & 3.1 & 10.3 & 54.4 & 3.6 & 1.3 \\
HSBP1  & 2.1 & 0.9 & 12.6 & 55.1 & 16.9 & 67.5 & 4.9 & 4.2 & 12.3 & 76.4 & 3.7 & 1.1 \\
MIF    & 2.1 & 1.0 & 9.9 & 38.1 & 14.1 & 48.7 & 4.6 & 3.9 & 11.9 & 72.2 & 3.4 & 0.9 \\
Bglobin& 2.2 & 1.1 & 10.4 & 38.2 & 14.7 & 48.9 & 4.9 & 3.8 & 12.5 & 86.8 & 3.5 & 0.9 \\
MRP51  & 2.2 & 1.1 & 9.8 & 38.6 & 14.3 & 48.4 & 4.8 & 4.5 & 12.4 & 83.6 & 3.5 & 0.9 \\
GAPDH  & 2.1 & 1.0 & 11.0 & 52.9 & 15.2 & 65.3 & 4.7 & 3.7 & 11.6 & 79.0 & 3.5 & 1.1 \\
FLucif. & 2.1 & 1.0 & 12.2 & 62.9 & 16.4 & 74.3 & 5.0 & 4.5 & 11.7 & 72.0 & 3.7 & 1.1 \\
Hotair & 2.2 & 1.1 & 11.3 & 62.8 & 15.6 & 76.7 & 4.6 & 4.0 & 10.7 & 64.9 & 3.6 & 1.2 \\
Neat1  & 2.2 & 1.1 & 10.1 & 40.5 & 14.5 & 52.6 & 5.1 & 5.5 & 13.4 & 102.1 & 3.5 & 1.0 \\

\bottomrule
\end{tabular}
\caption{Distribution statistics for m\RNA\ and lnc\RNA\ sequences and their shuffles.  Each sequence was shuffled 1000 times.}
\label{table:MathewsStatistics}
\end{table}

\subsection{ArchiveII}

In the ArchiveII database, many structures exhibited high percentages of unpaired bases, which suggests an absence of pairing information for these positions.  To approximate the complete structures, we removed any pseudoknots and non-canonical pairs from each structure.  Then, we used the remaining partial structure as a constraint and filled out the rest of the structure using RNAfold, with the standard command-line setting of disallowing lonely pairs.  As reported in \Cref{table:ArchiveIIStatistics}, all five of these \RNA\ families have \ETE\ values concentrated at low values (typically $< 2.5$nm) even though the families cover a substantial range of sequence lengths.

\begin{figure}
    \begin{center}
        \includegraphics[scale=0.68]{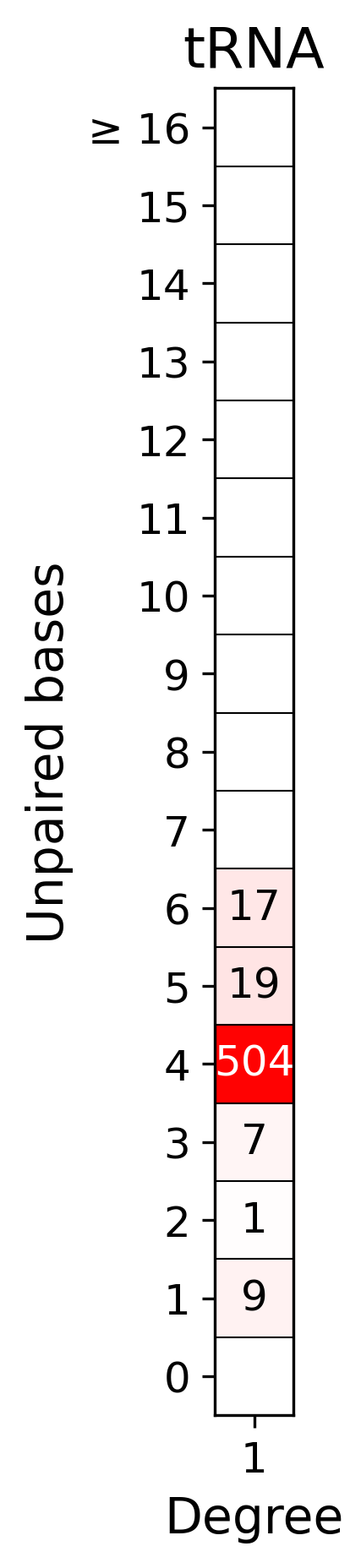}
        \includegraphics[scale=0.68]{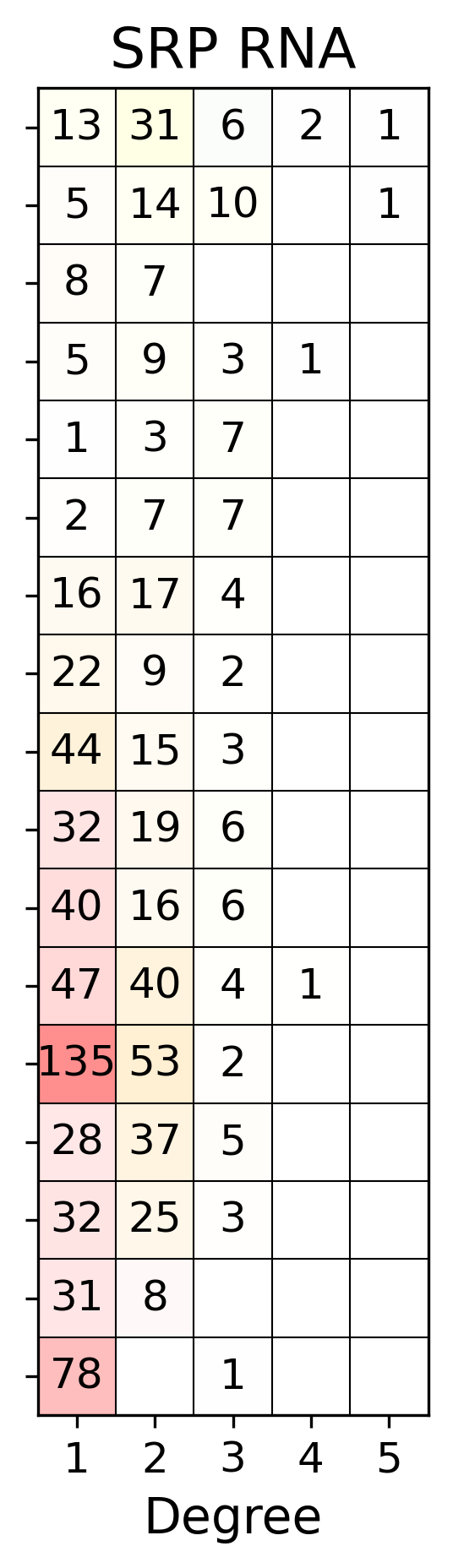}
        \includegraphics[scale=0.68]{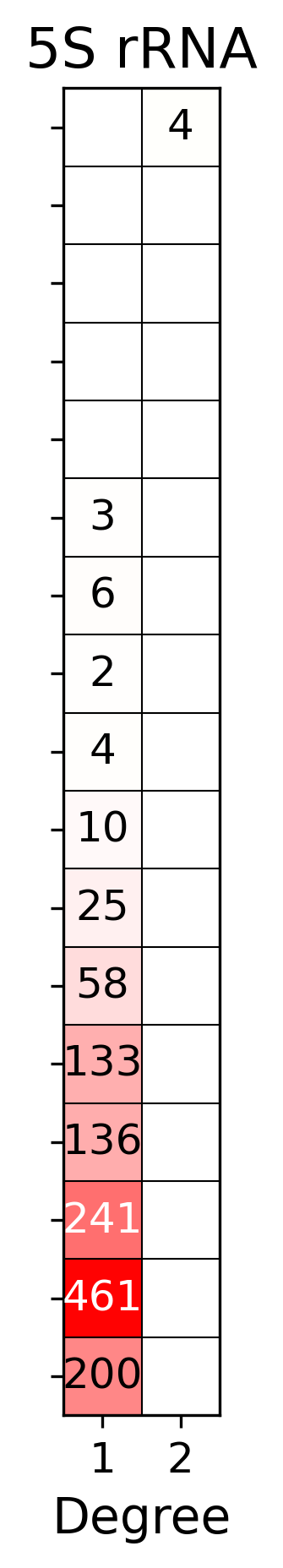}
        \includegraphics[scale=0.68]{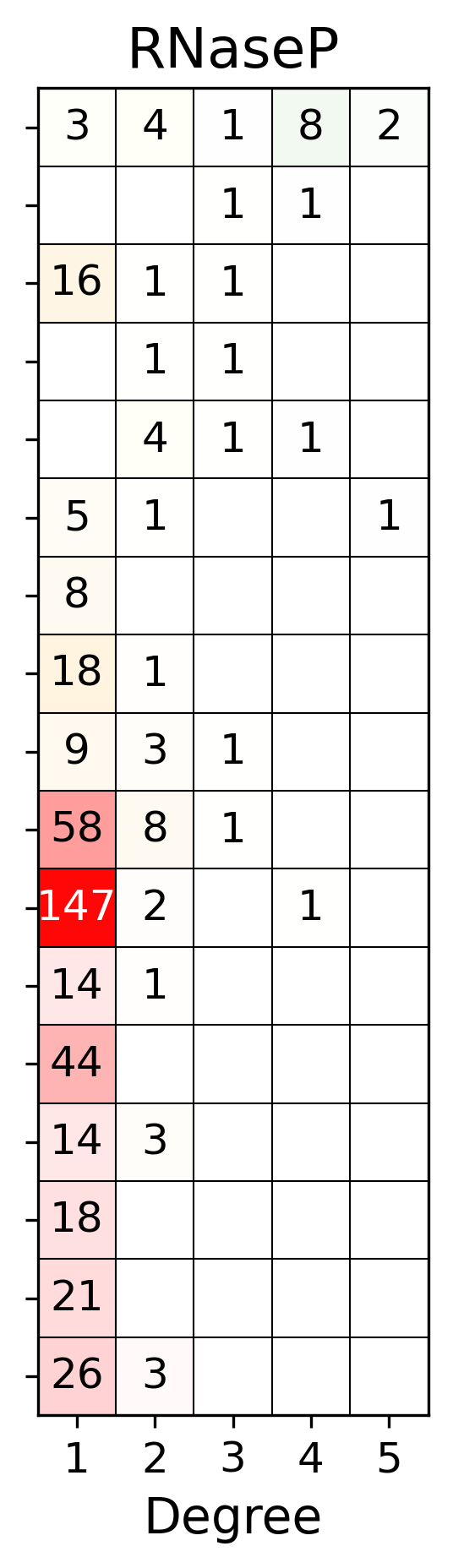}
        \includegraphics[scale=0.68]{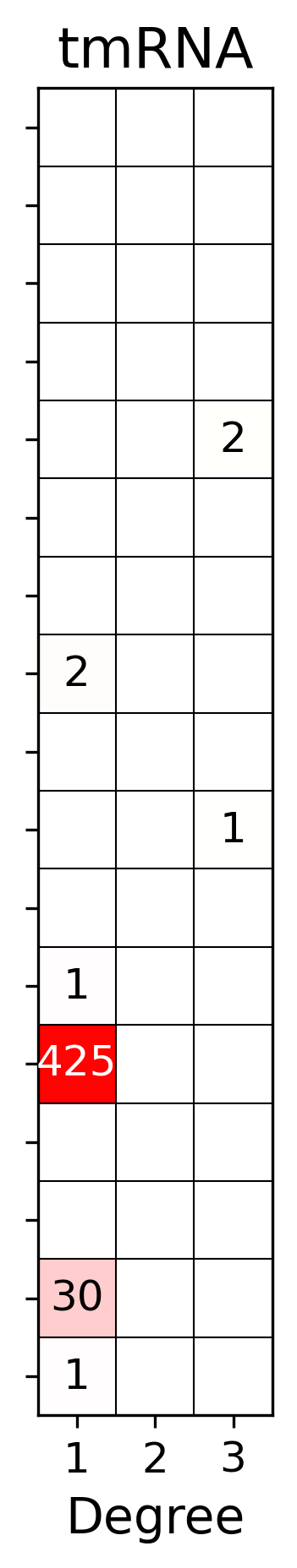}
    \end{center}
    \caption{The distribution of \DEG\ versus \UNP\ across the ArchiveII dataset.  The intensity of each color represents the percent of structures with the corresponding \DEG\ and \UNP\ values.  Colors represent approximate \ETE\ distances, color-coded as red: 1.5 - 2.5, orange: 2.5-3.5, yellow; 3.5-4.5, green: 4.5-5.5, blue: 5.5-6.5, purple: 6.5-7.5.}
    \label{fig:ShortHeat}
\end{figure}

In \Cref{fig:ShortHeat}, we examine the variability of \UNP\ and \DEG\ within each family.  The effect of homology is clear within the t\RNA, 5S r\RNA, and tm\RNA\ families, as \DEG\ is consistently $1$ with a negligible variance, and even \UNP\ remains concentrated.  In contrast, \SRP\ and \RNaseP\ exhibit a wider breadth of possible secondary structures.  This leads to a bit more variability for \DEG, but much more variability in \UNP\ and therefore also \LEN.  Even here, the structures are concentrated around relatively few modes.  Despite this increased variability, the formula defining \ETE\ weights \DEG\ more heavily than \UNP.  Thus, even \SRP\ and \RNaseP\ have consistently small \ETE\ values with small variances.  In all cases, the \ETE\ values and variances are smaller than predicted by our Pfold theoretical distribution.

In contrast to \ETE, \HEL\ and \STM\ tend to be much longer than our predicted models for every family.  Although \SRP\ and \RNaseP\ illustrate the smallest values of \HEL, it turns out that these helices often form the initial segment of longer stems.  For \RNaseP, many helices of length 3 are part of a stem of length 8, while for \SRP, some stems remain short while others are much longer, explaining the large variance seen here.

For each sequence, we then created five shuffles using uShuffle, and computed the MFE structure using RNAfold.  For each family, the mean and variance of \ETE\ are now much closer to the Pfold theoretical distribution both in terms of mean and variance as well as in terms of overall shape.  For example, in \Cref{fig:ArchiveIISideBySide},  we find that the distribution of \LEN\ is concentrated for many of the families, although the values were still within the typical bounds of the theoretical distribution.  This reflects how sequences within any one family do not form uniformly random samples of \rna.  In contrast, once the sequences were shuffled, we find a distribution closely resembling a negative binomial.  Graphs for the other parameters can be found in \Cref{sec:DataGraphs}.

\begin{figure}
    \begin{center}
        \includegraphics[width=0.48\textwidth]{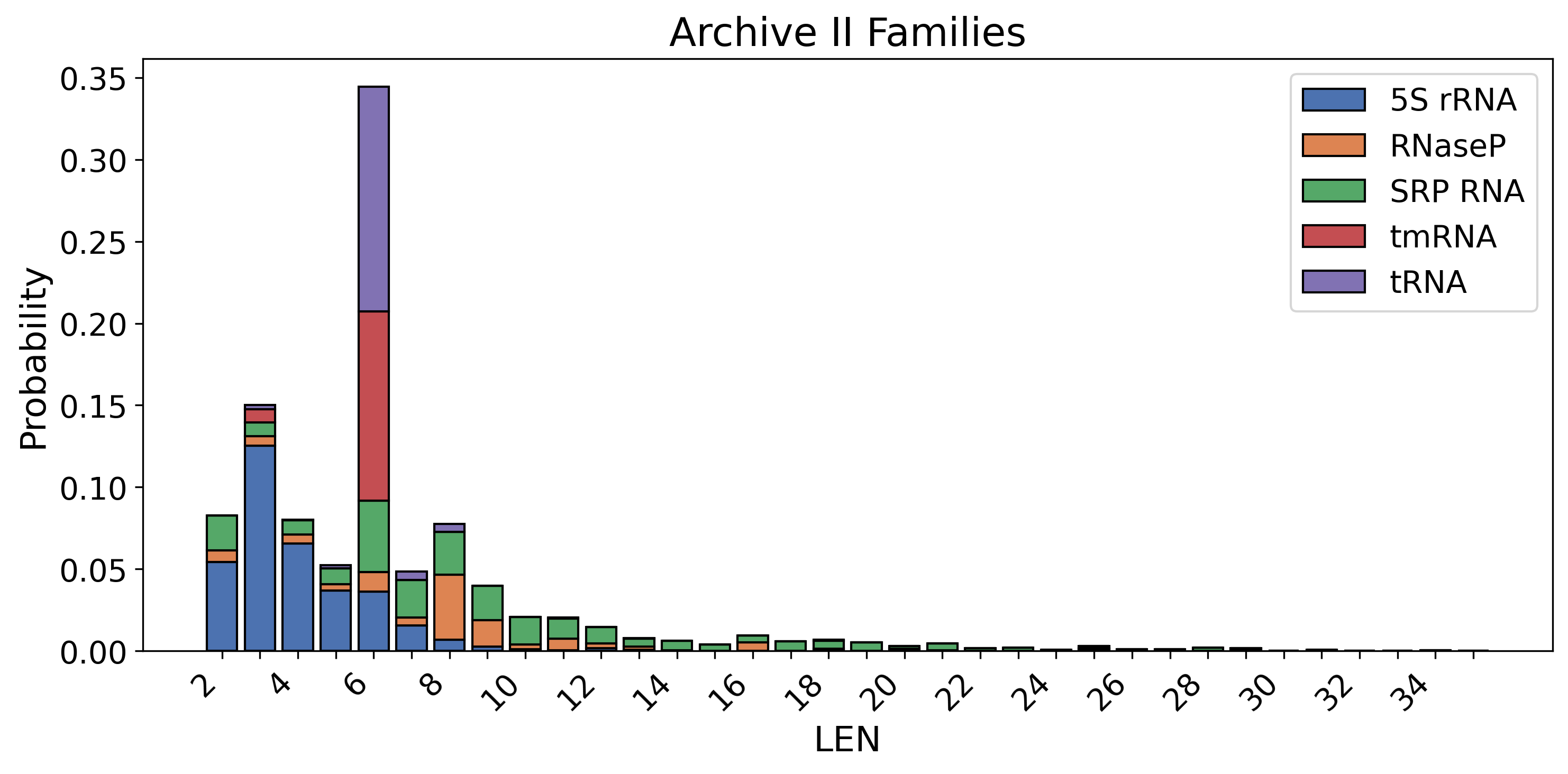}
        \includegraphics[width=0.48\textwidth]{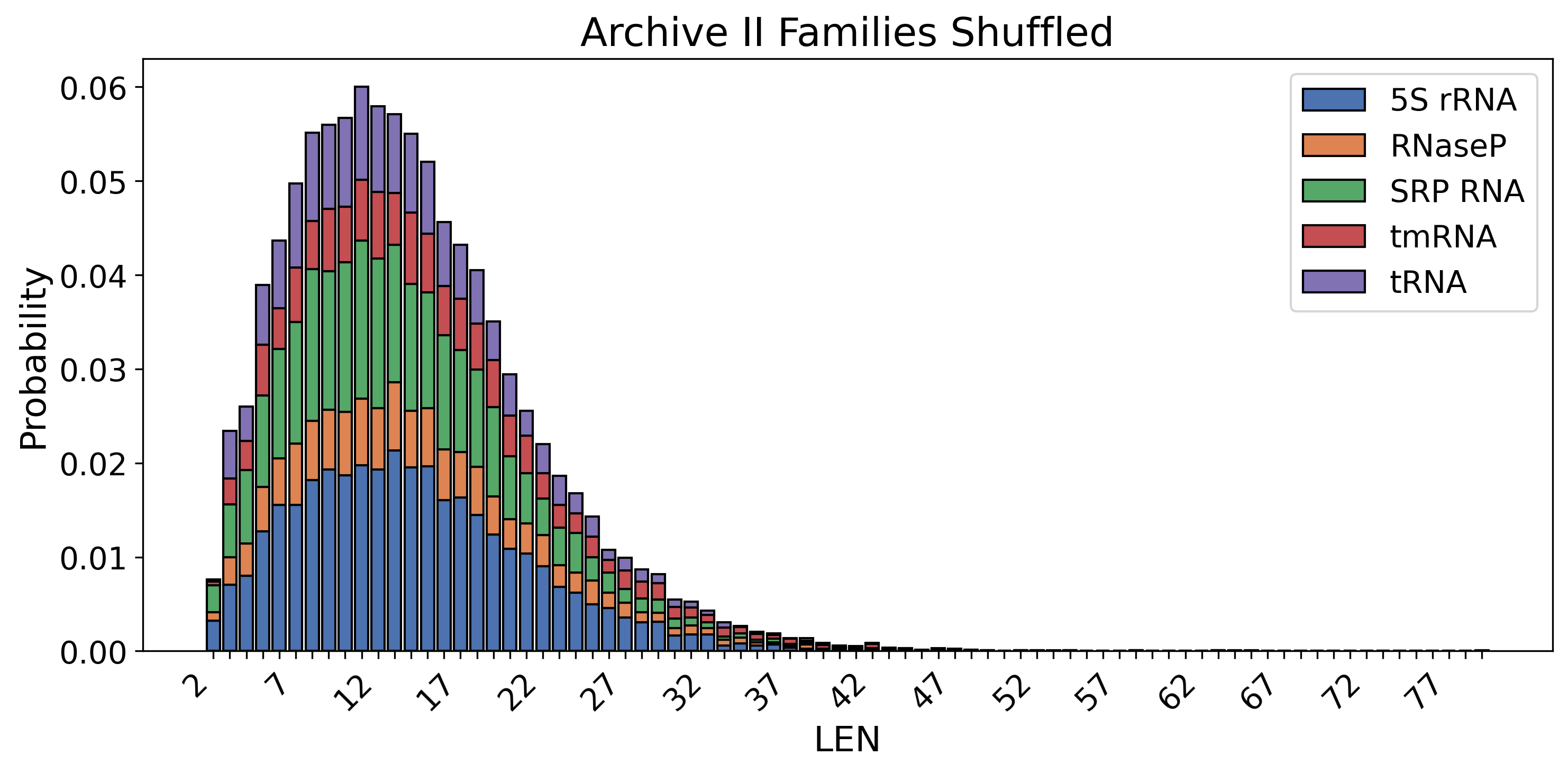}
        \includegraphics[width=0.48\textwidth]{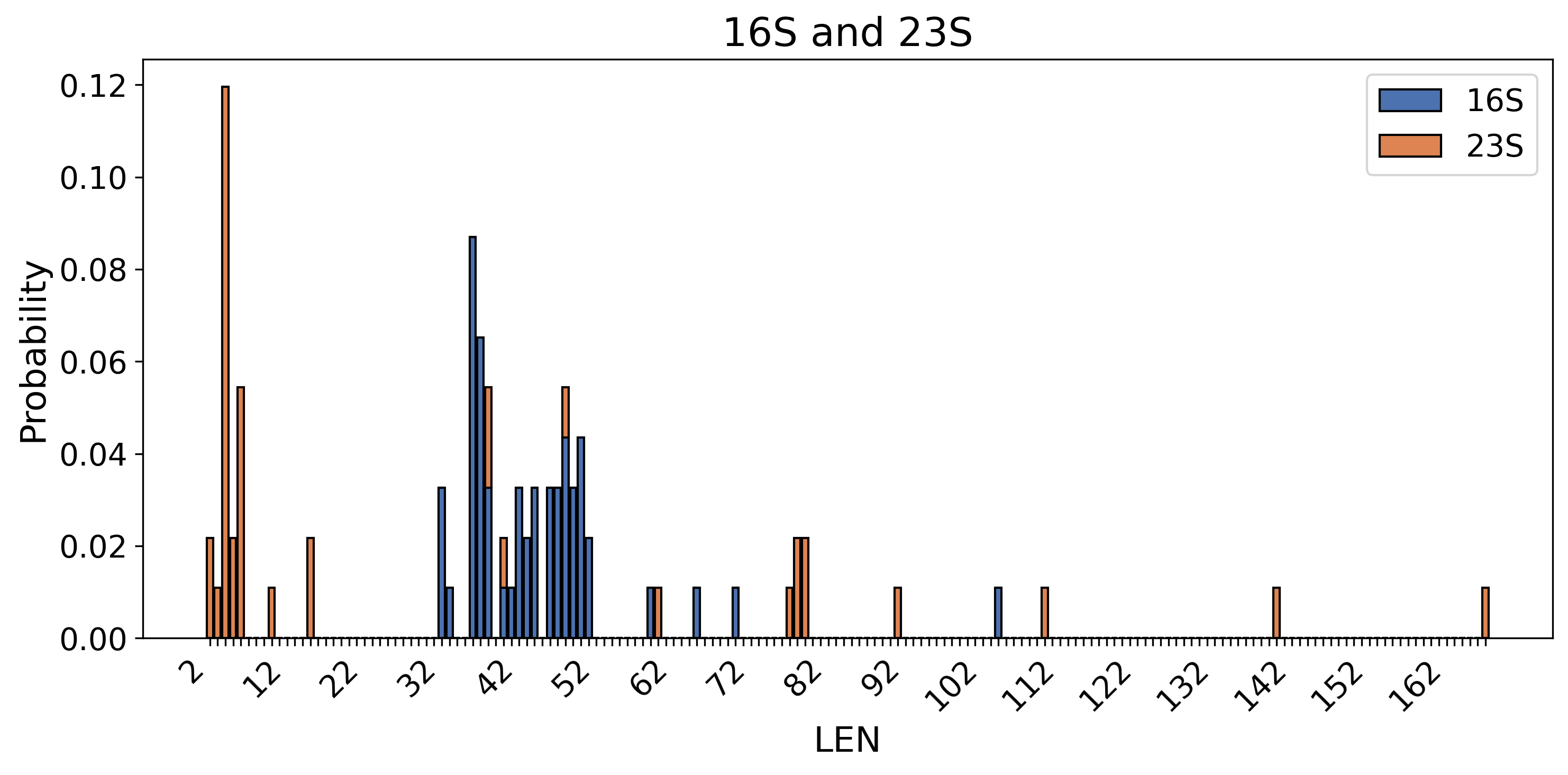}
        \includegraphics[width=0.48\textwidth]{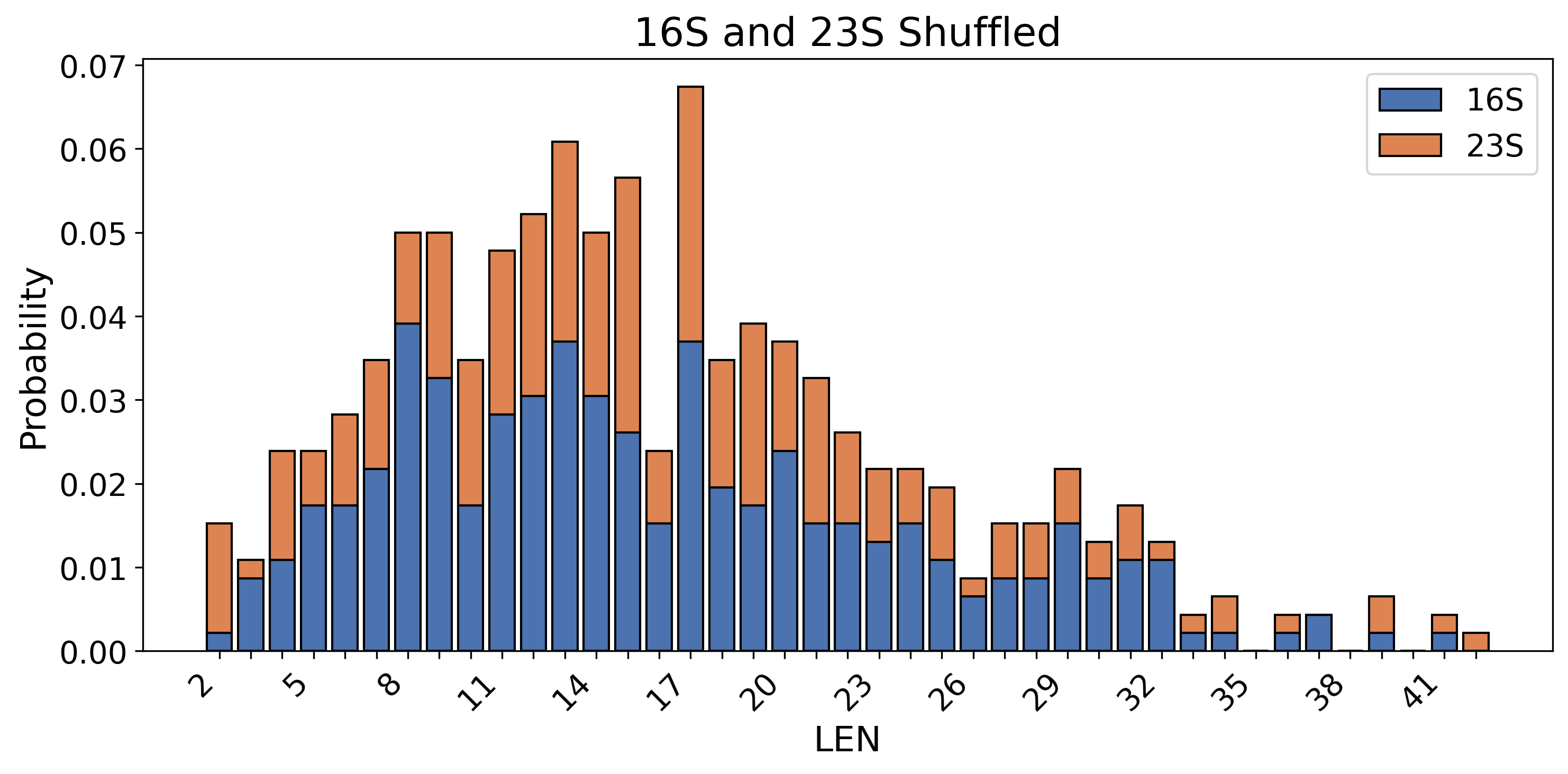}
        \includegraphics[width=0.48\textwidth]{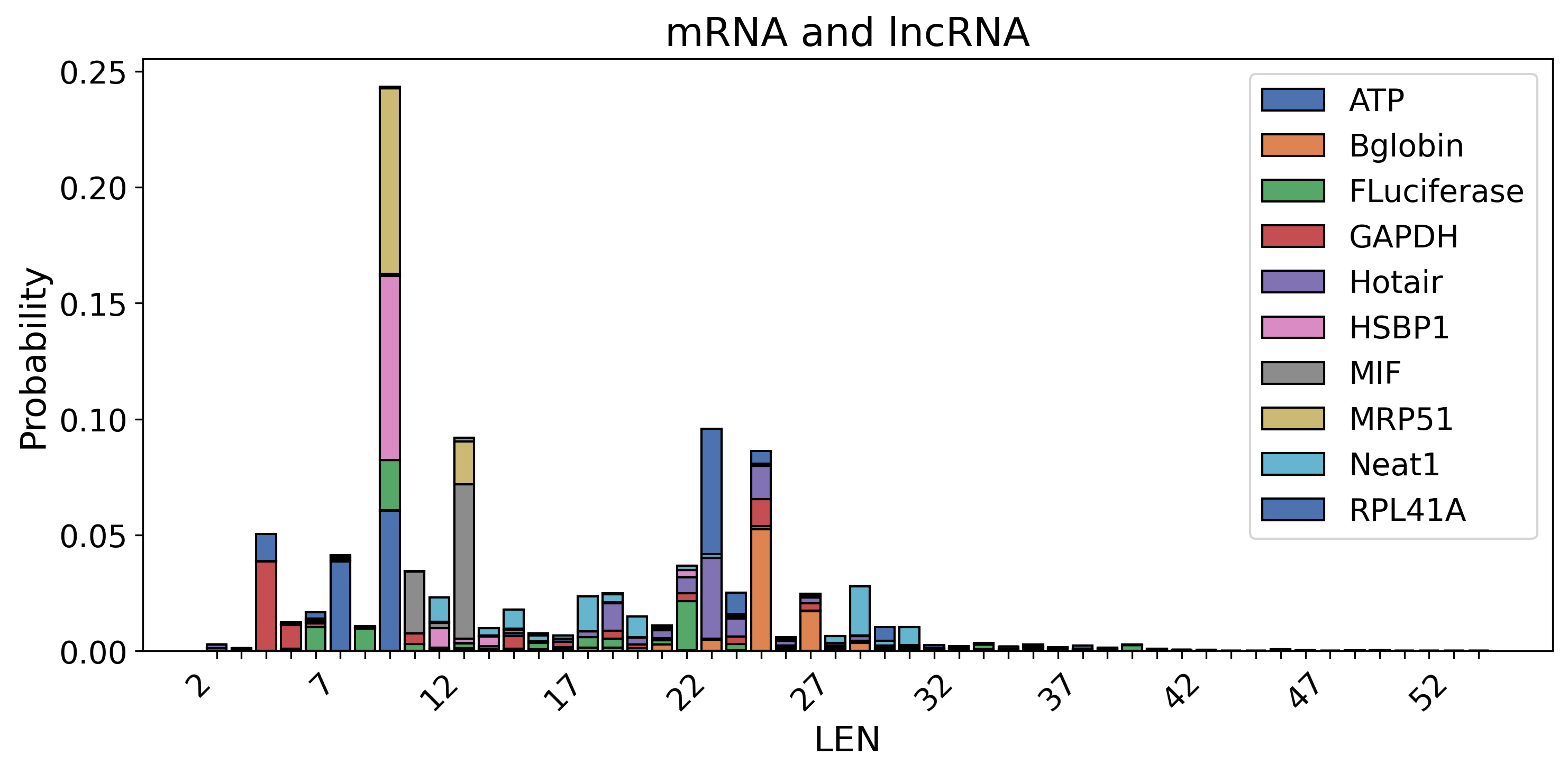}
        \includegraphics[width=0.48\textwidth]{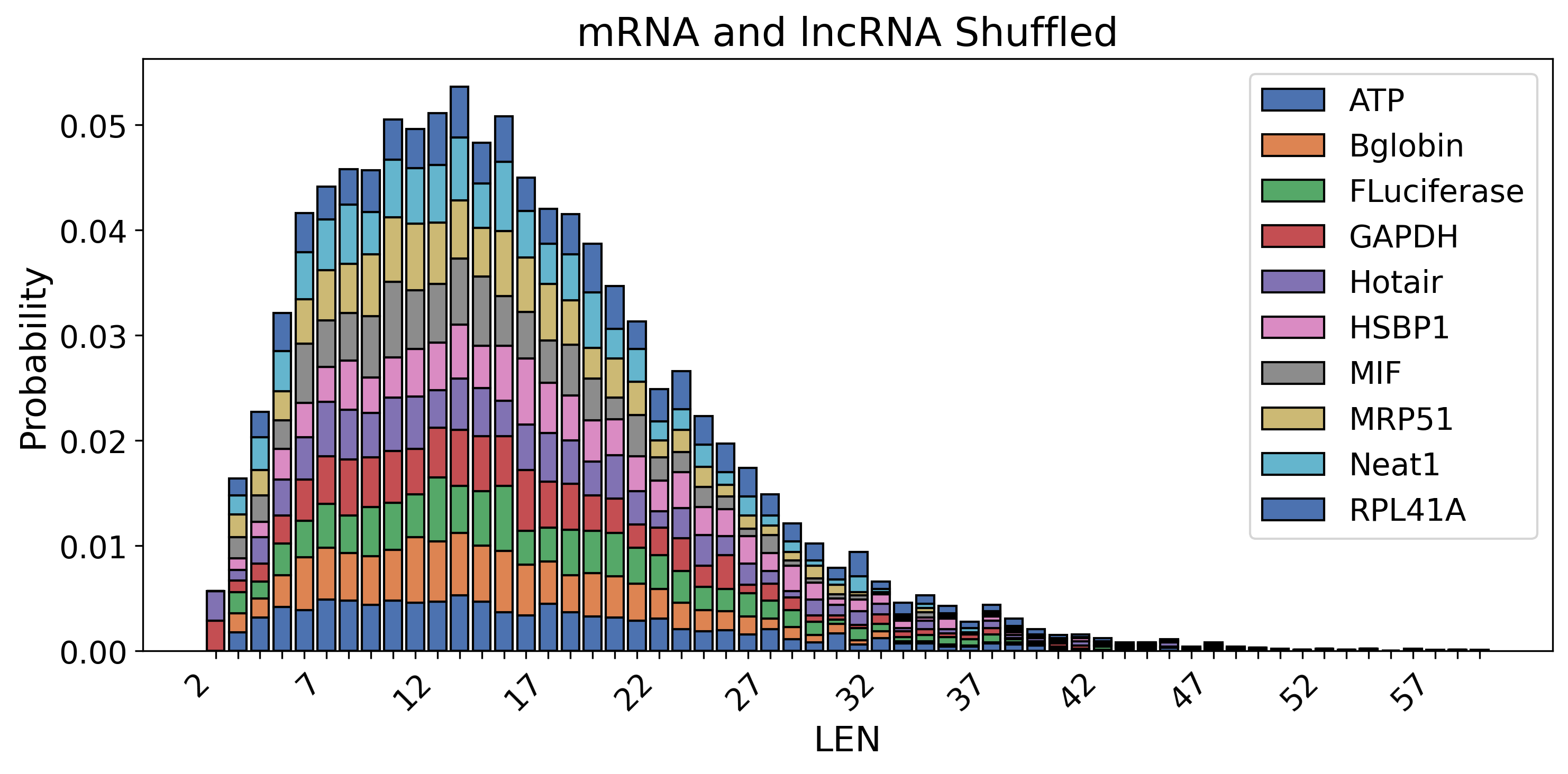}
    \end{center}
    \caption{The distribution of \LEN\ among sequences in each database.  On the left, the original sequences.  On the right, shuffled sequences.  For the ArchiveII data and the ribosomal \rna, each sequence was shuffled $5$ times and the MFE structure was calculated, while for the m\rna\ and lnc\rna, each sequence was shuffled 1000 times.  Graphs for the other parameters are in the appendix, \Cref{sec:DataGraphs}.}
    \label{fig:ArchiveIISideBySide}
\end{figure}

\subsection{Comparative RNA Web-2}

Although the ArchiveII database does include some 16S and 23S ribosomal \rna\ structural information, it is derived from and 20 distinct 16S and 5 distinct 23S sequences, each split into multiple domains.  So, to explore ribosomal \rna\ wholistically, we began with the curated collections of 16S sequences from \cite{Konings:1995} and 23S sequences from \cite{Fields:1996}.  From the sequences studied in these papers, we took all the sequences that had complete structures available in the Comparative RNA Web-2 website, which left us with 54 16S sequences and 38 23S sequences.

The 16S structures often contain a pseudoknot in the exterior loop.  Pseudoknots may provide an evolutionary strategy for increasing end-to-end distances \cite{Ermolenko:2020}.  There are several computational methods for pseudoknot removal, with no one being convention \cite{Smit:2008}.  The impact of choosing different pseudoknot removal methods on our measurements of end-proximity is unclear.  So, to assess \DEG\ and \UNP\ in this context, we computed the shortest path in each structure from the $5'$ end to the $3'$ end, and then counted unpaired bases and base pairs in this path.  We omit computing stem length \STM\ for these pseudoknotted structures.

Both \DEG\ and \UNP\ are substantially higher for this set than the other \RNA\ families we consider.  While \DEG\ means around $4$ are within the parameters of our theoretical model, the mean and variance of \UNP\ are both unusually high.  One partial explanation is that many (but not all) of the structures recorded here begin and end with long strings of unpaired nucleotides.  In contrast, the consensus structures for 16S and 23S in Rfam \cite{Griffiths:2003} tend to have many fewer unpaired nucleotides at the beginning and end of the sequences.  Nonetheless, the unpaired nucleotides play a smaller role in determining \ETE\ values, so even with the additional nucleotides, the \ETE\ means are less than $7$nm.

We again shuffled each sequence $5$ times using uShuffle and predicted MFE structures for each shuffle.  After this process, the resulting distributions for each parameter again became much closer to the theoretical Pfold distribution.  While the means and variances are generally still slightly elevated when compared to the ArchiveII shuffled data, they are now close.

\subsection{mRNA and lncRNA}

We used the ten sequences curated in \cite{Lai:2018} as representatives for m\RNA\ and lnc\RNA.  For each of these sequences, we used Rsample to create 1000 sample structures.

In contrast to the ArchiveII database, the structures had more variation in the value of \DEG, with some sequences having an average degree of nearly $4$.  While there was also more variability in \UNP, the sampled structures generally fell into a handful of structural modes.  Almost every sequence had multiple modes.  For some sequences (ATP, 443nt), the structural modes were quite similar in terms of \DEG\ and \UNP, while for others (RPL41A, 321nt), the modes varied much more.  While the differences in modes therefore cannot be explained by length alone, in general, the longer sequences had mode distinct structural modes.

Despite the increased variations in this set of \RNA, the approximation formula for \ETE\ proved to be a smoothing function, leaving all \ETE\ means between $2.6$nm and $4.8$nm, with relatively low variances.  All \ETE\ mean values were within reasonable ranges of the theoretical Pfold distribution mean.  The primary distinguishing factor is that the variances were much lower.

We then shuffled each sequence 1000 times and computed the MFE structures with RNAfold.  As with the ArchiveII dataset, the \ETE\ means moved towards the mean in the theoretical Pfold distribution and the variances increased.  This suggests that for \ETE, the variance could be signal distinguishing between actual \RNA\ sequences (lower variance) and randomized ones (higher variance).

\section{Conclusion} \label{sec:Conclusion}

The multivariate analytic combinatorics pipeline used here has enabled us to fully characterize the distributions of several parameters of \rna\ secondary structures. While the means of other characteristics of \rna\ have been investigated previously \cite{Hofacker:1998}, the pipeline here may bring richer information about these parameters.

As seen in \Cref{sec:Data}, some \rna\ structures have shorter first helices but longer first stems.  Refining the combinatorial specifications for the Pfold grammar could yield more information on stems and whether their length accounts for the large number of \rna\ sequences with degree one exterior loops.  Initial testing shows that the additional symbols needed to track stem length greatly increase the computational complexity in the corresponding system of equations defining the GF.

As is often the case with research at the interface of mathematics and biology, the results here tie back to other well-studied combinatorial objects.  In particular, Dyck paths can be encoded as pattern-avoiding permutations~\cite{Krattenthaler:2001, Adin:2018}, and the results here give distributional information on the blocks contained within a pattern.  The results on the degree of the Motzkin paths connects to the number of blocks in a \emph{Motzkin permutation}~\cite{Elizalde:2005}.  Hence, the results here may have additional implications on properties of pattern-avoiding permutations.

Finally, the recurrence of the negative binomial distribution brings an obvious question: what does a coin flip represent combinatorially?  If each tails represents increasing a measure of end-proximity by one, then is there a corresponding interpretation for a heads?

\section*{Acknowledgments} The authors thank David Mathews for providing the m\rna\ and lnc\rna\ sequences, and Yuta Hozumi for providing the forced structures for the ArchiveII dataset. C.H. was partially supported by NIH R01GM126554, and by the NSF-Simons Southeast Center for Mathematics and Biology (SCMB) through
NSF DMS \#1764406 and Simons Foundation/SFARI 594594.

\section*{Data Availability} The Archive II dataset \cite{Sloma:2016} and the 16S and 23S sequences \cite{Cannone:2002, Chan:2023} are available online.  The lnc\RNA\ and m\RNA\ sequences from \cite{Lai:2018} were obtained via personal communication with the authors.  Verifications of our theoretical computations are available in a companion {\tt SageMath} worksheet: \url{https://github.com/gtDMMB/EndProximity}

\newpage

\begin{appendices}

\section{\ETE\ for Motzkin paths} \label{sec:MotzkinETE}
Here, we complete the proof of \Cref{cor:Motzkin} by computing the mean and variance of \ETE.
\begin{proof}
    Given the limiting bivariate PGF $M_p(u, v)$ from \cref{lem:MotzkinPaths}, we first find an explicit formula for the probability mass function for the joint distribution.  To do so, we expand $M_p(u, v)$ as a series using that $1/(1-x)^2 = \sum_{n = 0}^\infty (n + 1)x^n$:
    \begin{align*}
        \frac{v}{(3-u-v)^2} &= \frac{v}{9} \left(\frac{1}{1 - \frac{u + v}{3}}\right)^2\\
        &= \frac{v}{9} \sum_{n = 0}^\infty (n + 1) \left(\frac{u + v}{3}\right)^n\\
        &= \sum_{n = 0}^\infty \sum_{k = 0}^n \binom{n}{k} \frac{n + 1}{3^{n + 2}} u^k v^{n - k + 1}\\
        &= \sum_{i = 0}^\infty \sum_{j = 0}^\infty \binom{i + j - 1}{i} \frac{i + j}{3^{i + j + 1}} u^i v^j
    \end{align*}
    Collectively, this tells us:
    \[
    p_{i, j} := \mathbb{P}(\UNP = i, \DEG = j) = \binom{i + j - 1}{i} \frac{i + j}{3^{i + j + 1}}.
    \]
    With the approximate distance formula $\ETE = \sqrt{1.5^2 \cdot \DEG^{6/5} + 0.62^2 \cdot \COV^{6/5}}$, we can now numerically compute $\mathbb{E} (\ETE)$ and $\mathbb{E} (\ETE)^2$.  First,
    \begin{equation} \label{eq:ETE-E-Motzkin}
    \mathbb{E}(\ETE) = \sum_{i=0}^\infty \sum_{j=0}^\infty \sqrt{1.5^2 \cdot j^{6/5} + 0.62^2 \cdot (i+j-1)^{6/5}} \cdot p_{i, j}
    \end{equation}
    We claim that summing the terms through $i + j = 50$ yields the expected value to within $0.0025$.  To see this, we give a rough bound the tail of this sum: 
    \[
    T_k := \sum_{i + j \geq k} p_{i, j}\sqrt{1.5^2 \cdot j^{6/5} + 0.62^2 \cdot (i+j-1)^{6/5}}
    \]
    Our goal is to compare $T_k$ to the tail of series that are derivatives of geometric series, because these tails can be computed explicitly.
    
    First, note that $\binom{i + j - 1}{i} \leq 2^{i + j - 1}$ for all $i$ and $j$.  Thus, $p_{i, j} \leq (2/3)^{i + j + 1}(i + j).$  Additionally, from the triangle inequality,
    \[
    \ETE \leq 1.5 (\DEG)^{3/5} + 0.62(\COV)^{3/5} \leq 2.12(\DEG + \UNP)^{3/5}.
    \]
    Combining these two inequalities shows that
    \begin{align*}
    T_k &\leq \sum_{i + j \geq k} (2.12)(i + j)^{8/5} \left(\frac{2}{3}\right)^{i + j - 1} = 2.12\sum_{n = k}^\infty (n + 1)n^{8/5}\left(\frac{2}{3}\right)^{n-1} \\
    &\leq 2.12 \cdot 3/2 \cdot\sum_{n = k}^\infty n^3 \left(\frac{2}{3}\right)^{n},
    \end{align*}
    where the last inequality is true for $k \geq 3$ because $n + 1 \leq n^{7/5}$ whenever $n \geq 3$. Temporarily replace $2/3 = x$.  Then, using common geometric series formulas,
    \begin{align*}
    2.12 \cdot 3/2 \cdot\sum_{n = k}^\infty n^3 x^{n} &= 2.12 \cdot 3/2 \cdot \sum_{n = 0}^\infty (n + k)^3 x^{n + k}\\
    &= 2.12 \cdot 3/2 \cdot x^k \cdot \left[\sum_{n = 0}^\infty n^3 x^n + 3k \sum_{n = 0}^\infty n^2 x^n + 3k^2 \sum_{n = 0}^\infty nx^n + k^3 \sum_{n = 0}^\infty x^n \right]\\
    &= 2.12 \cdot 3/2 \cdot x^k \cdot \left[ \frac{k^3}{1 - x} + \frac{3k^2x}{(1-x)^2} + \frac{3kx(1 + x)}{(1-x)^3} + \frac{x(1 + 4x + x^2)}{(1-x)^4}\right]
    \end{align*}
    Replacing $x = 2/3$ once more yields
    \[
    T_k \leq 2.12 \cdot 3/2 \cdot\sum_{n = k}^\infty n^3 x^{n} \leq \left(\frac{2}{3}\right)^k \left(3k^3 + 18k^2 + 90k + \frac{86}{3}\right) \leq 2.12 \cdot 3/2 \cdot \left(\frac{2}{3}\right)^k \cdot 4k^3
    \]
    where the last equality is true for $k \geq 23$.  It is easy to verify that this bound is decreasing for $k > 8$, and for $k = 50$, is less than $0.0025$.  Thus, it suffices to sum \Cref{eq:ETE-E-Motzkin} over all $i, j \leq 50$ to achieve the mean to within $0.0025$.

    For the second moment,
    \[
    \mathbb{E}(\ETE)^2 = \sum_{i=0}^\infty \sum_{j=0}^\infty \left(1.5^2 \cdot j^{6/5} + 0.62^2 \cdot (i+j-1)^{6/5}\right) \cdot p_{i, j},
    \]
    and a similar argument shows that summing among terms where $i + j \leq 65$ gives an error less than $0.002$.  Hence, we can calculate the expressed variance using the shortcut formula $\mathbb{E}(\ETE)^2 - [\mathbb{E}(\ETE)]^2$ with an appropriate increase in precision for the estimate of $\mathbb{E}(\ETE)$ to ensure the error of $(\mathbb{E}(\ETE))^2$ remains small.
\end{proof}

\newpage

\section{Graphs of distributions of parameters in datasets}\label{sec:DataGraphs}

Here, we provide stacked bar graphs in the style of \Cref{fig:ArchiveIISideBySide} illustrating the distributions of each parameter in each dataset before and after shuffling the sequences.

 \subsection{\DEG}

 \begin{center}
     \includegraphics[width=0.48\textwidth]{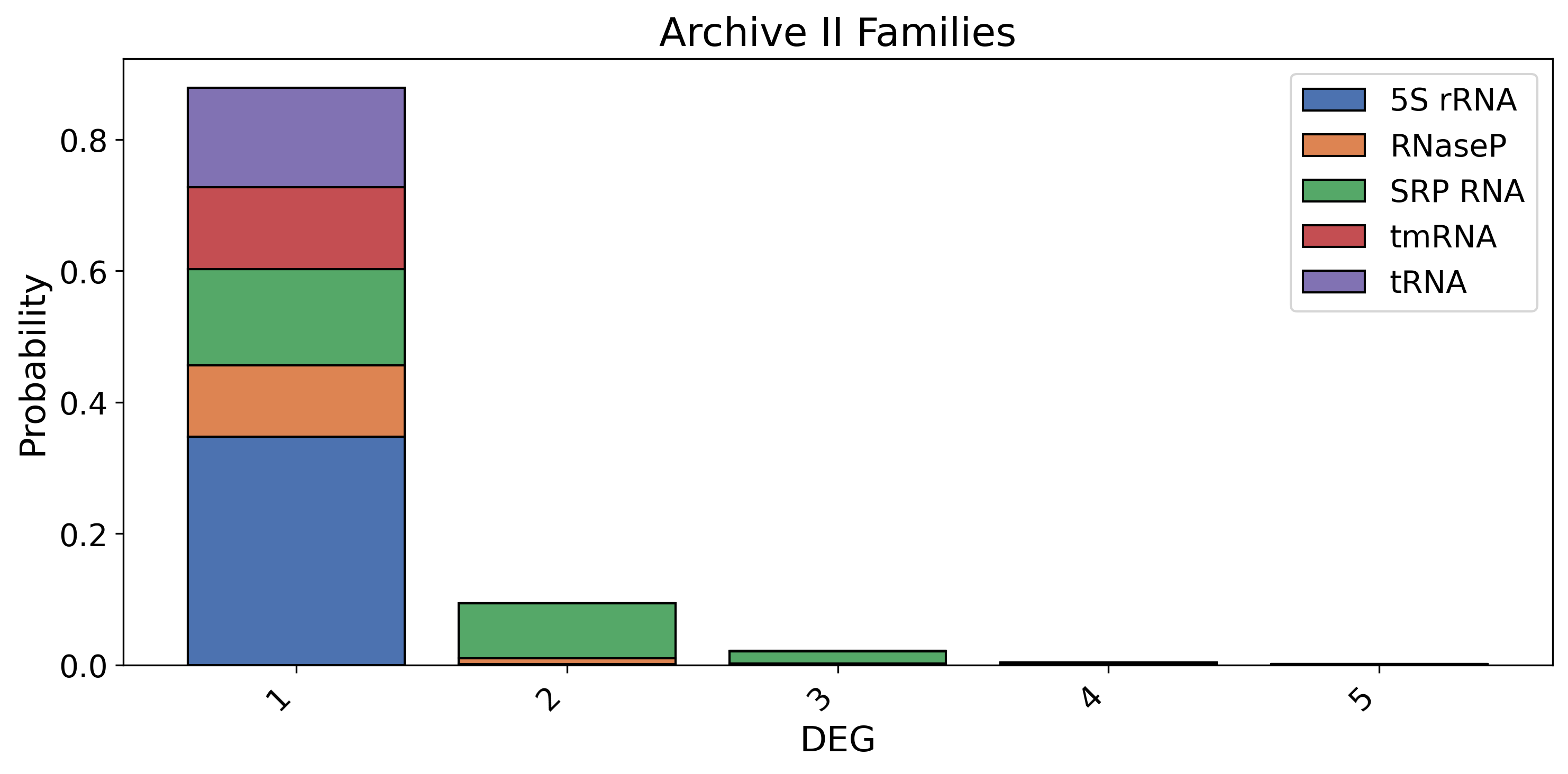}
     \includegraphics[width=0.48\textwidth]{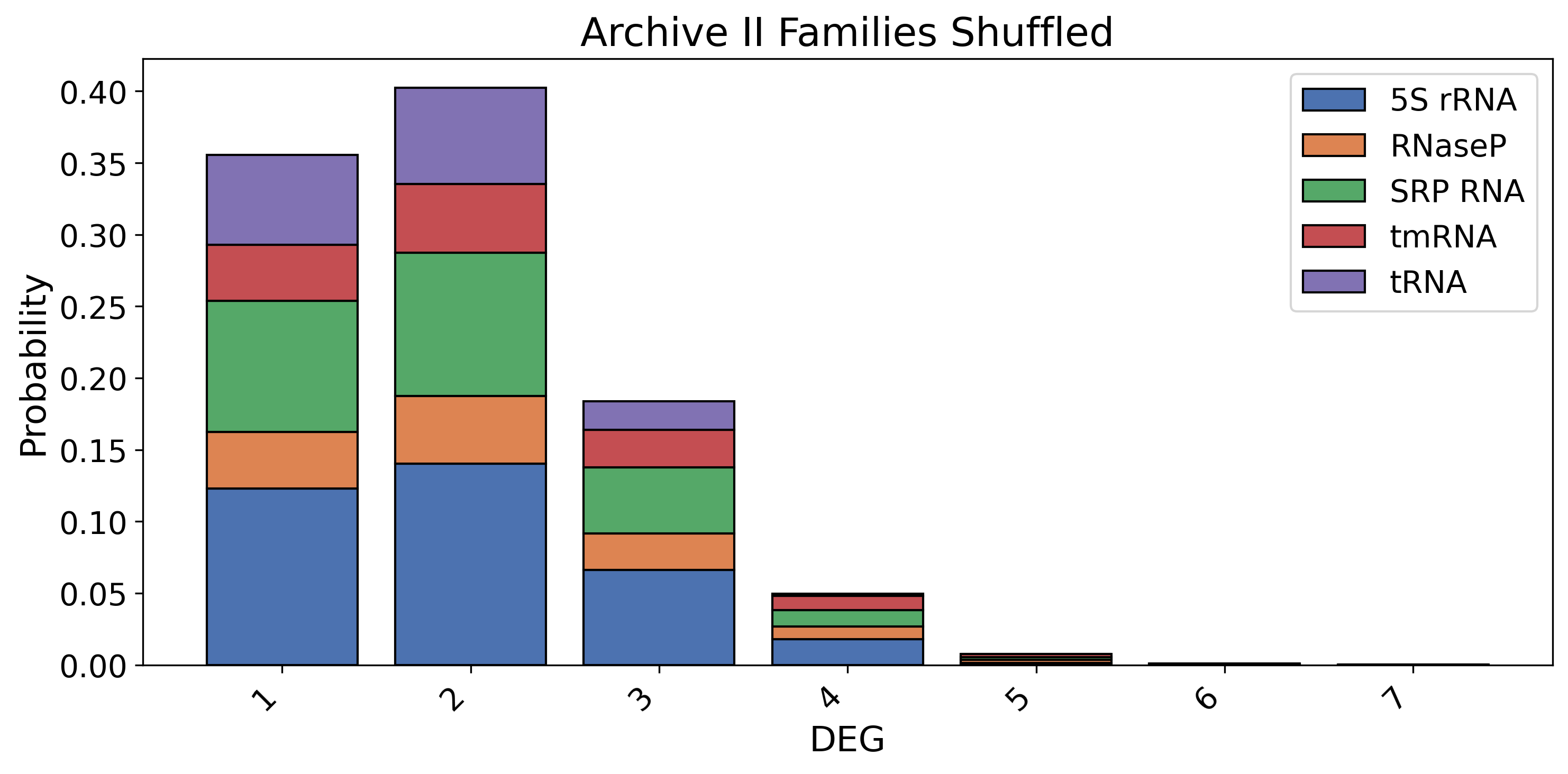}
     \includegraphics[width=0.48\textwidth]{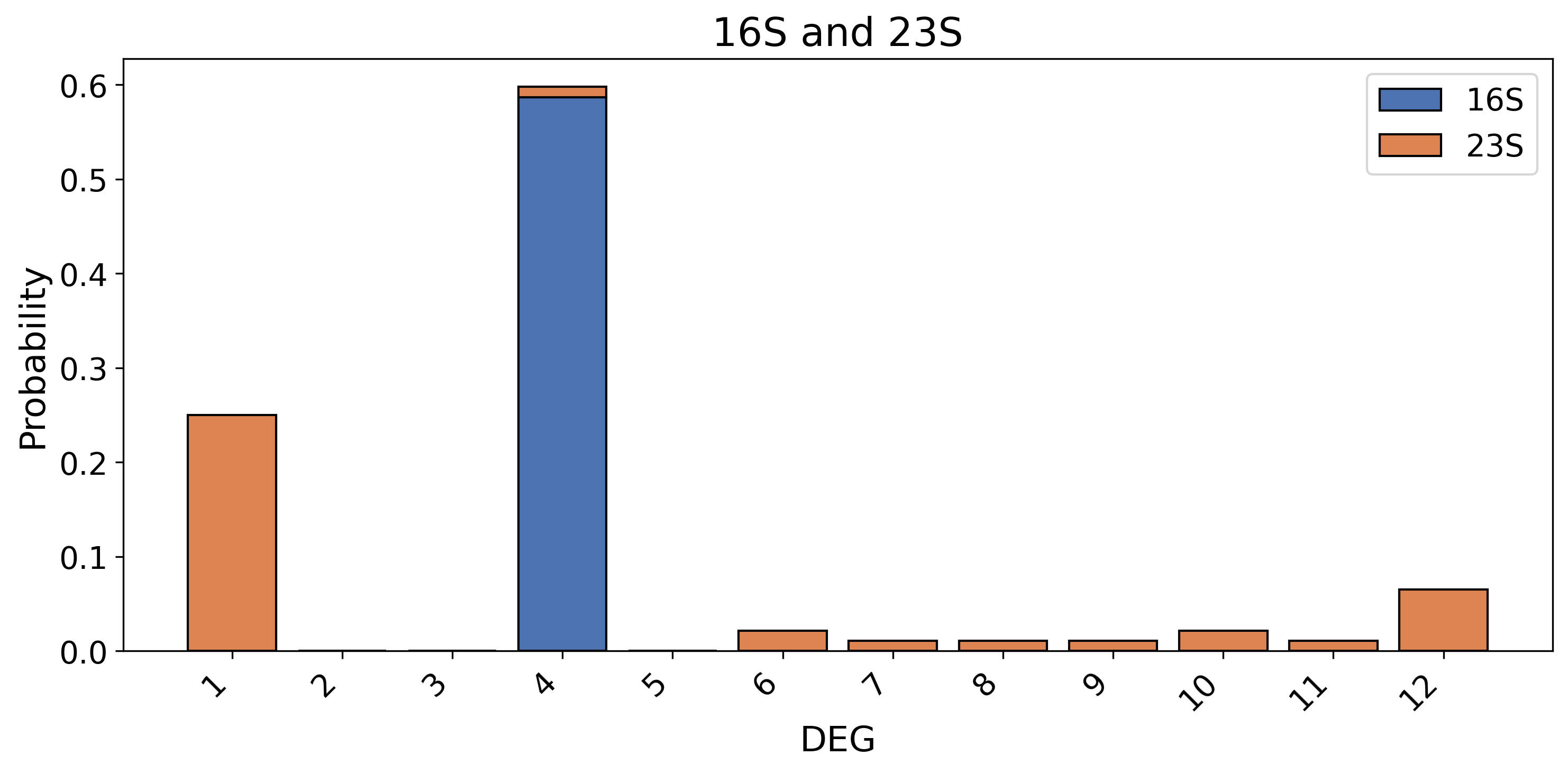}
     \includegraphics[width=0.48\textwidth]{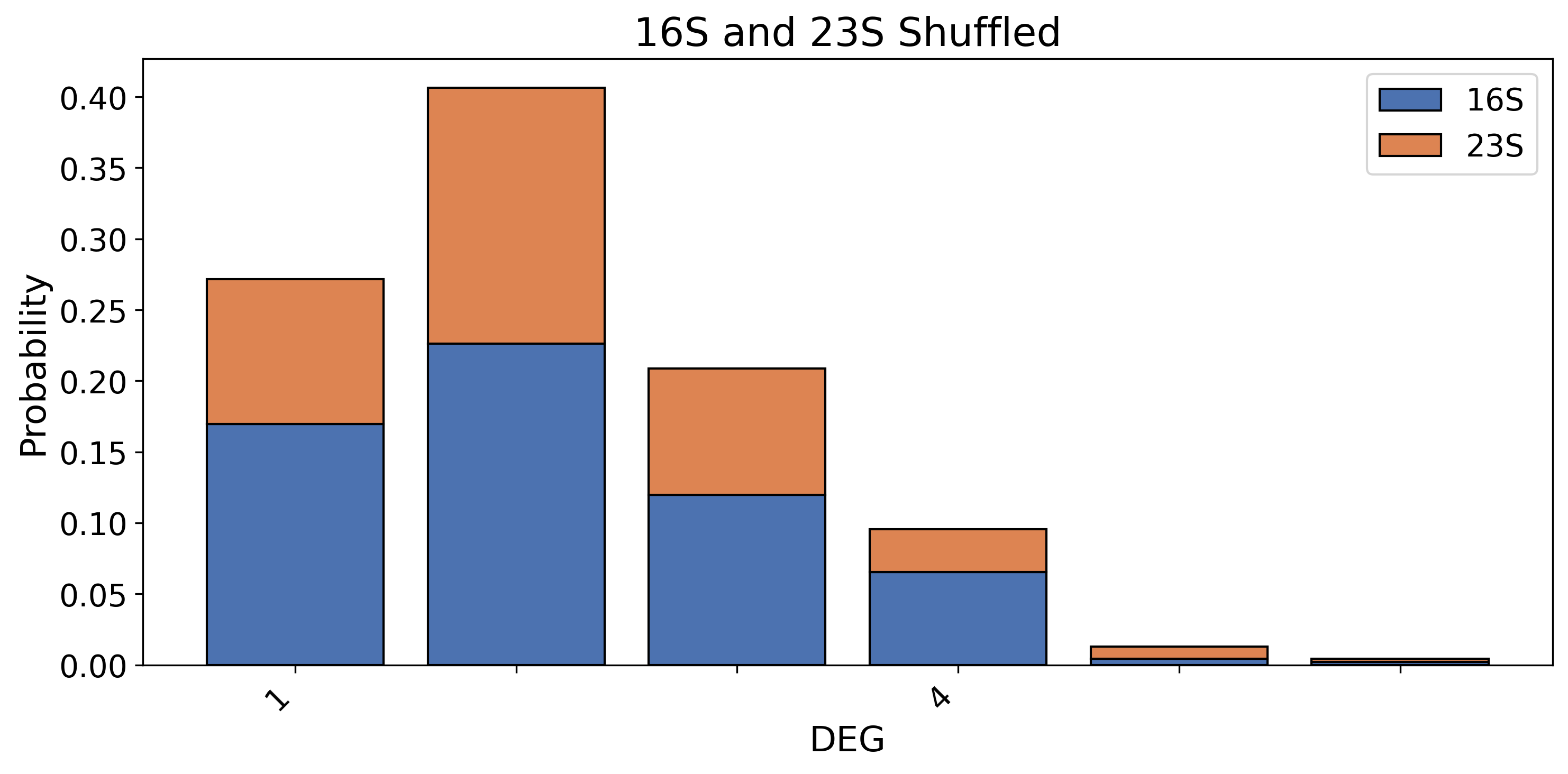}
     \includegraphics[width=0.48\textwidth]{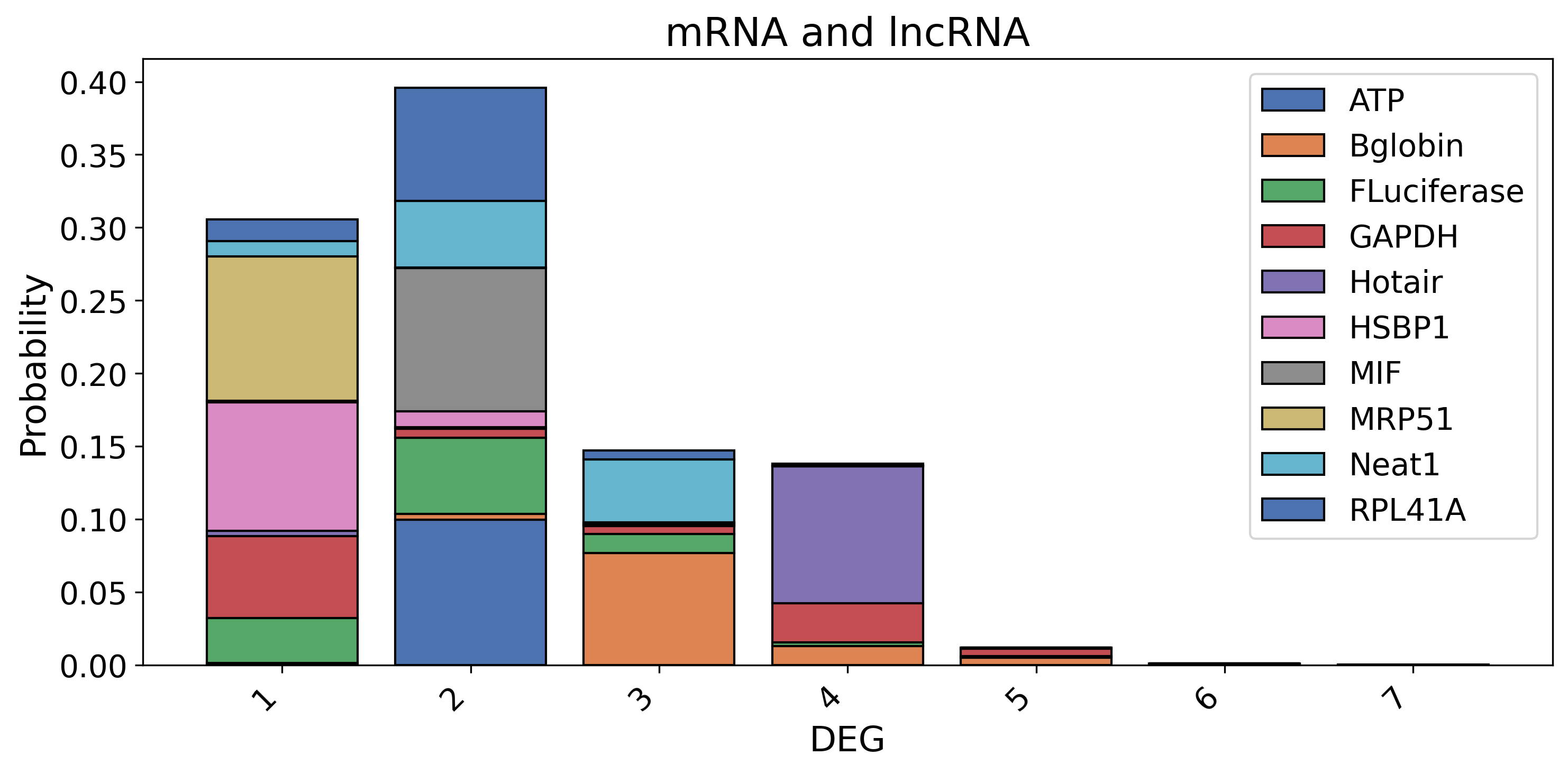}
     \includegraphics[width=0.48\textwidth]{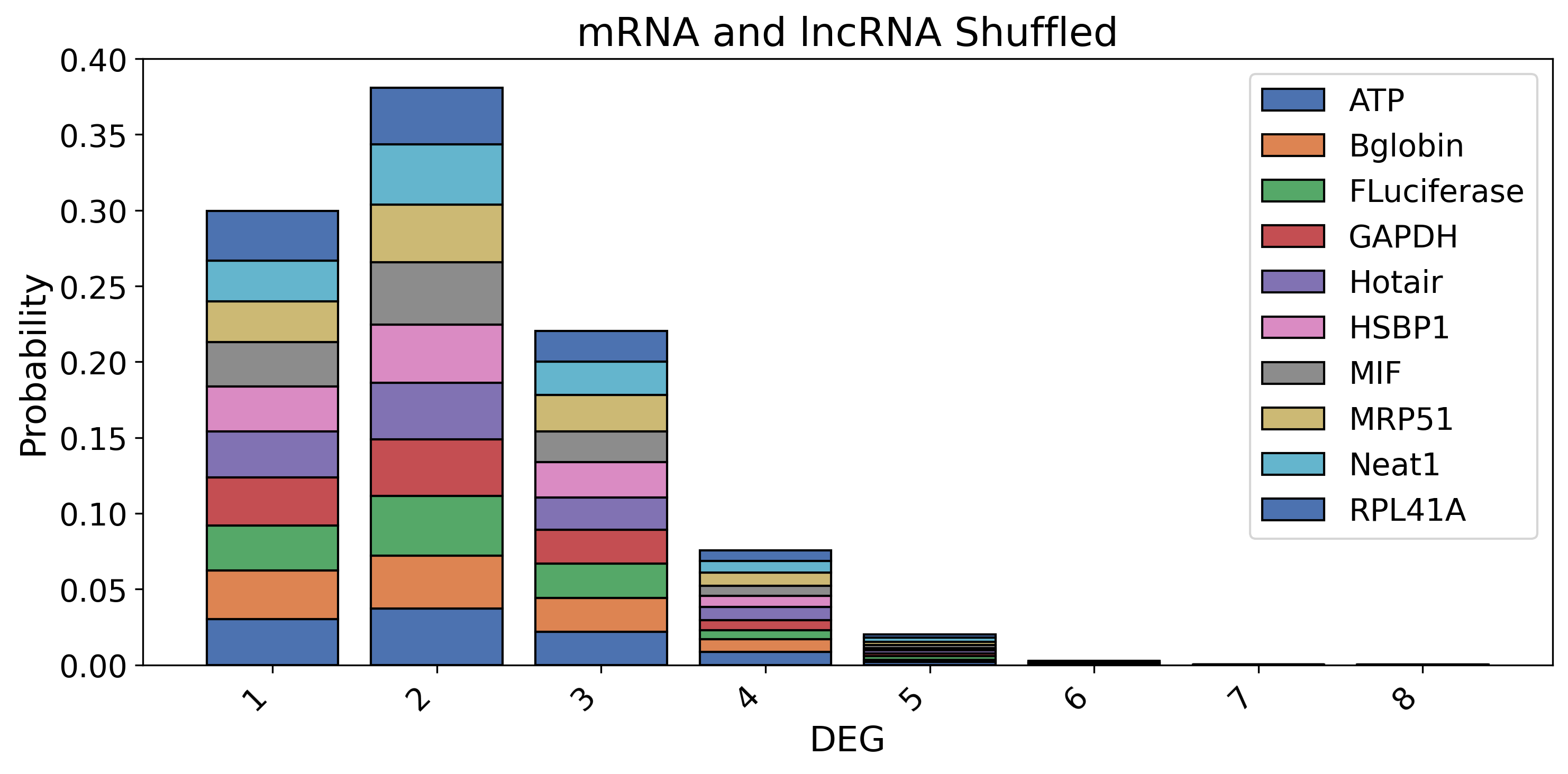}
 \end{center}

 \subsection{\UNP}

 \begin{center}
     \includegraphics[width=0.48\textwidth]{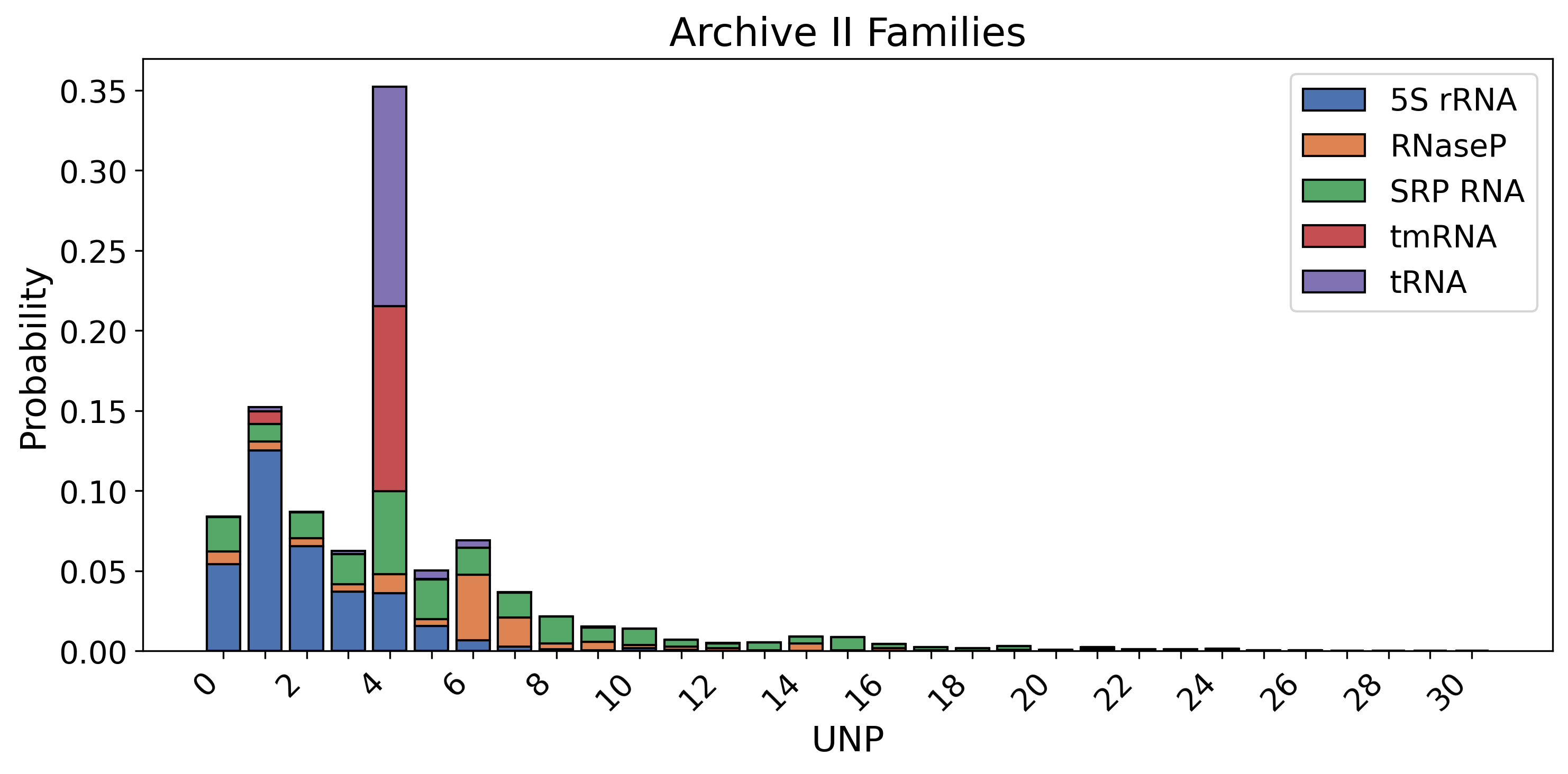}
     \includegraphics[width=0.48\textwidth]{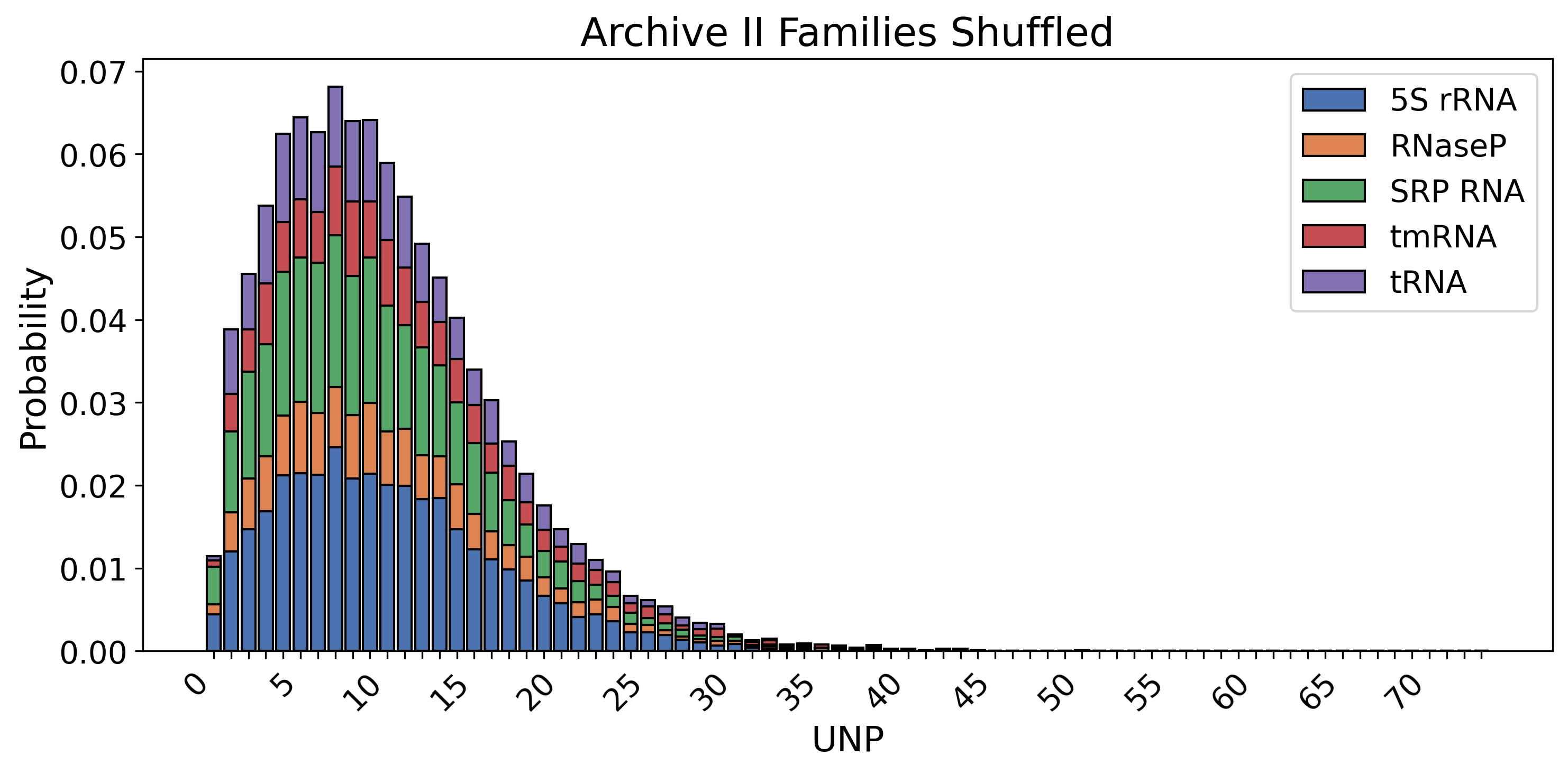}
     \includegraphics[width=0.48\textwidth]{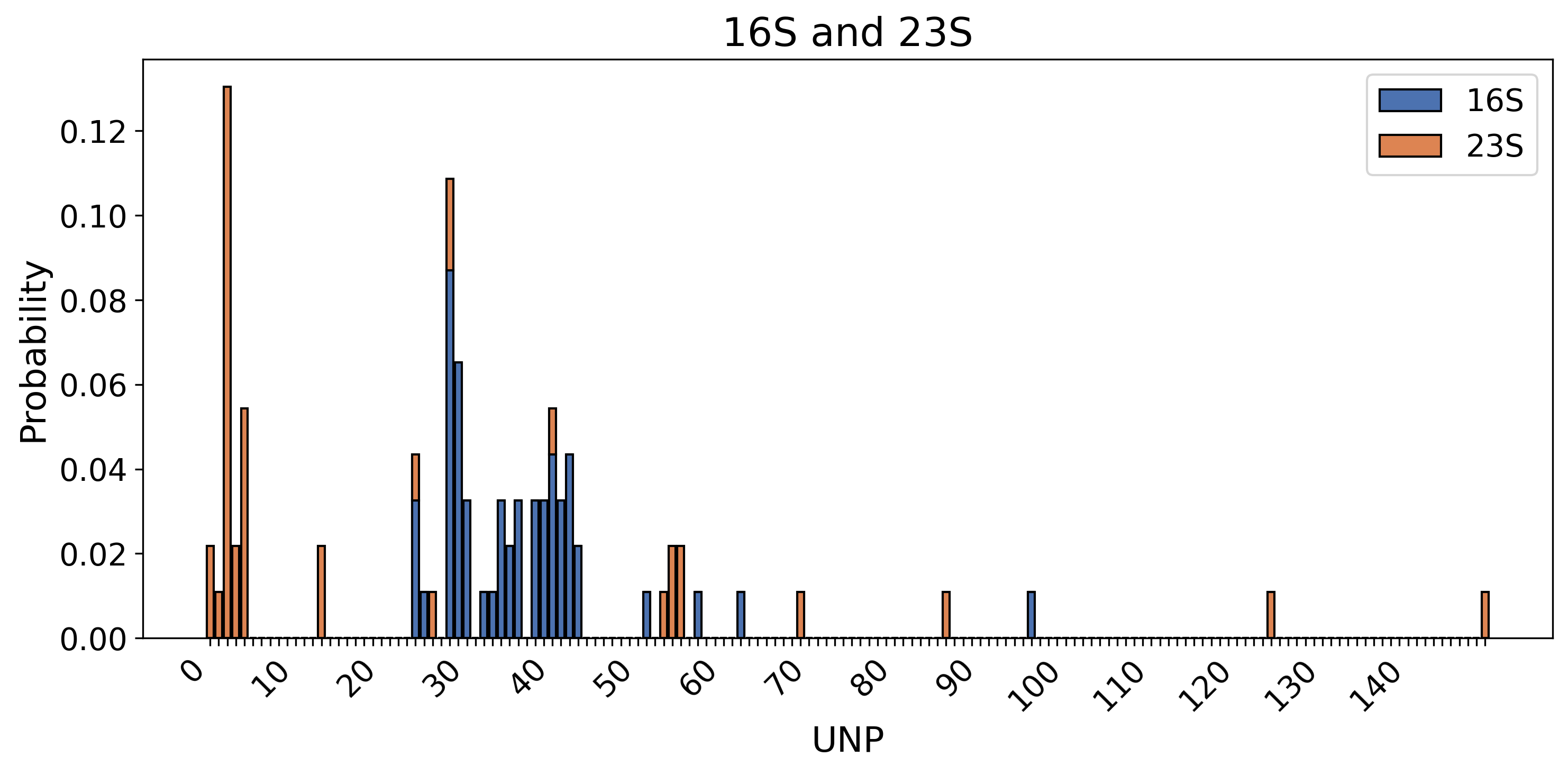}
     \includegraphics[width=0.48\textwidth]{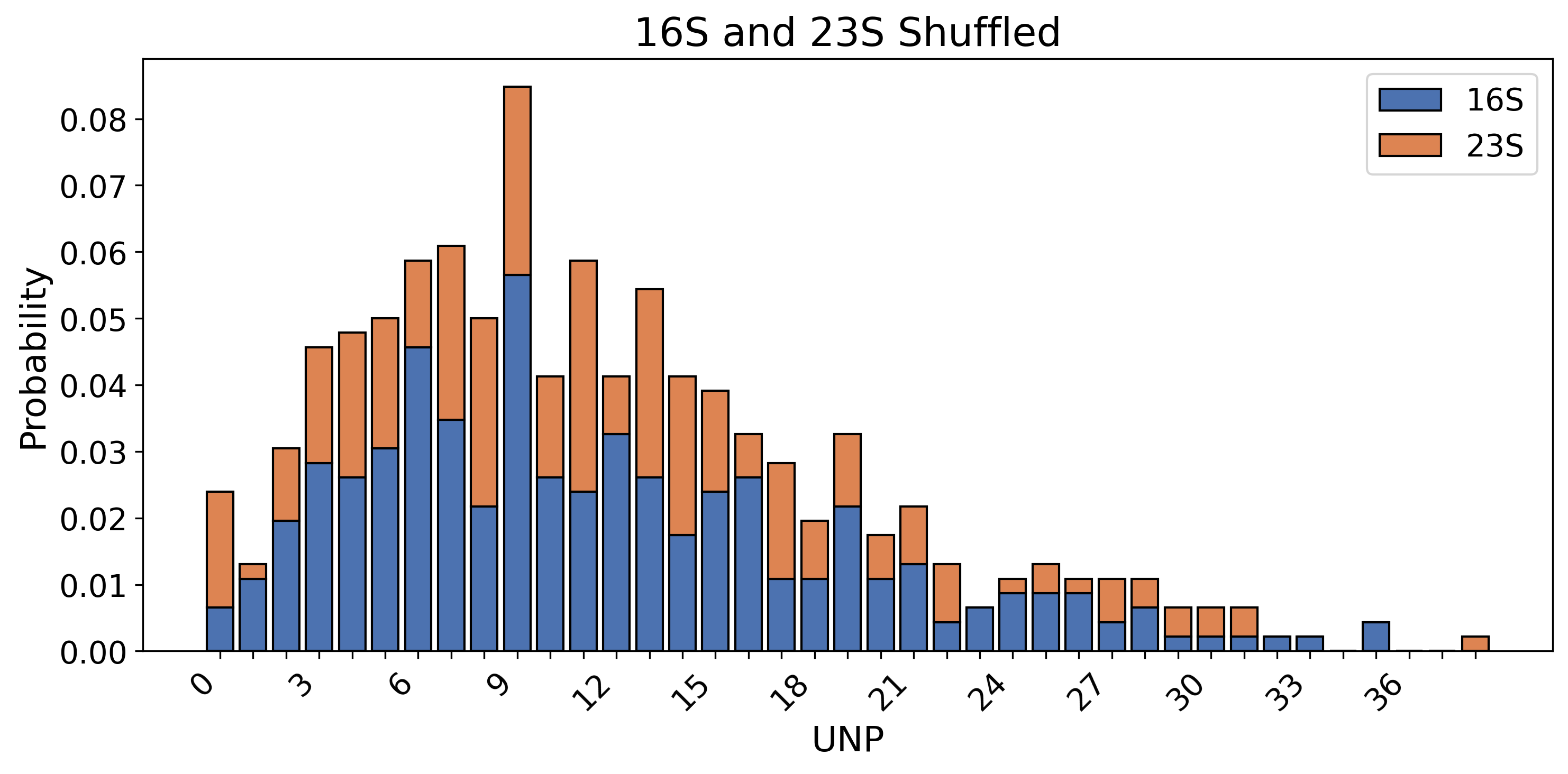}
     \includegraphics[width=0.48\textwidth]{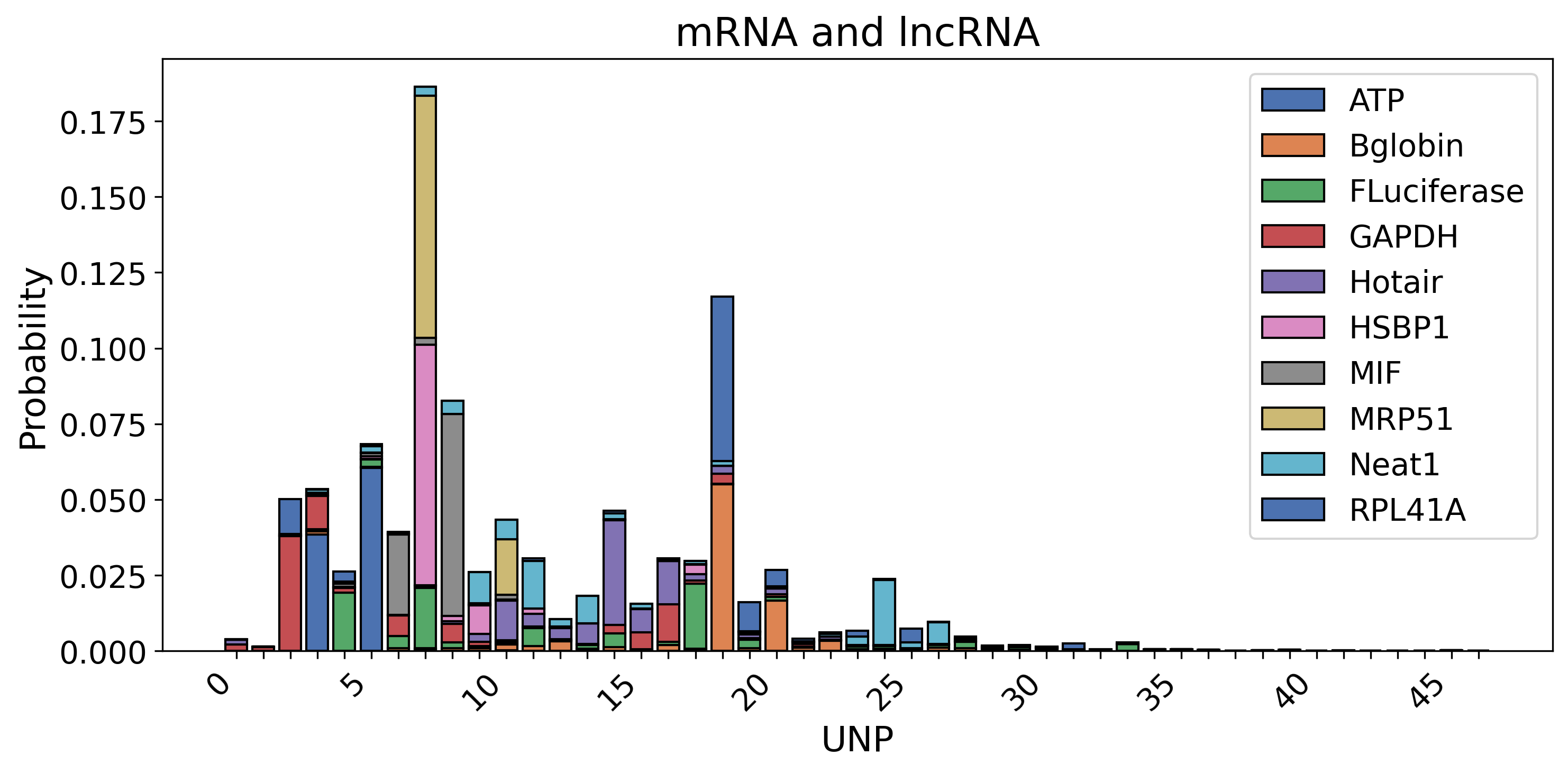}
     \includegraphics[width=0.48\textwidth]{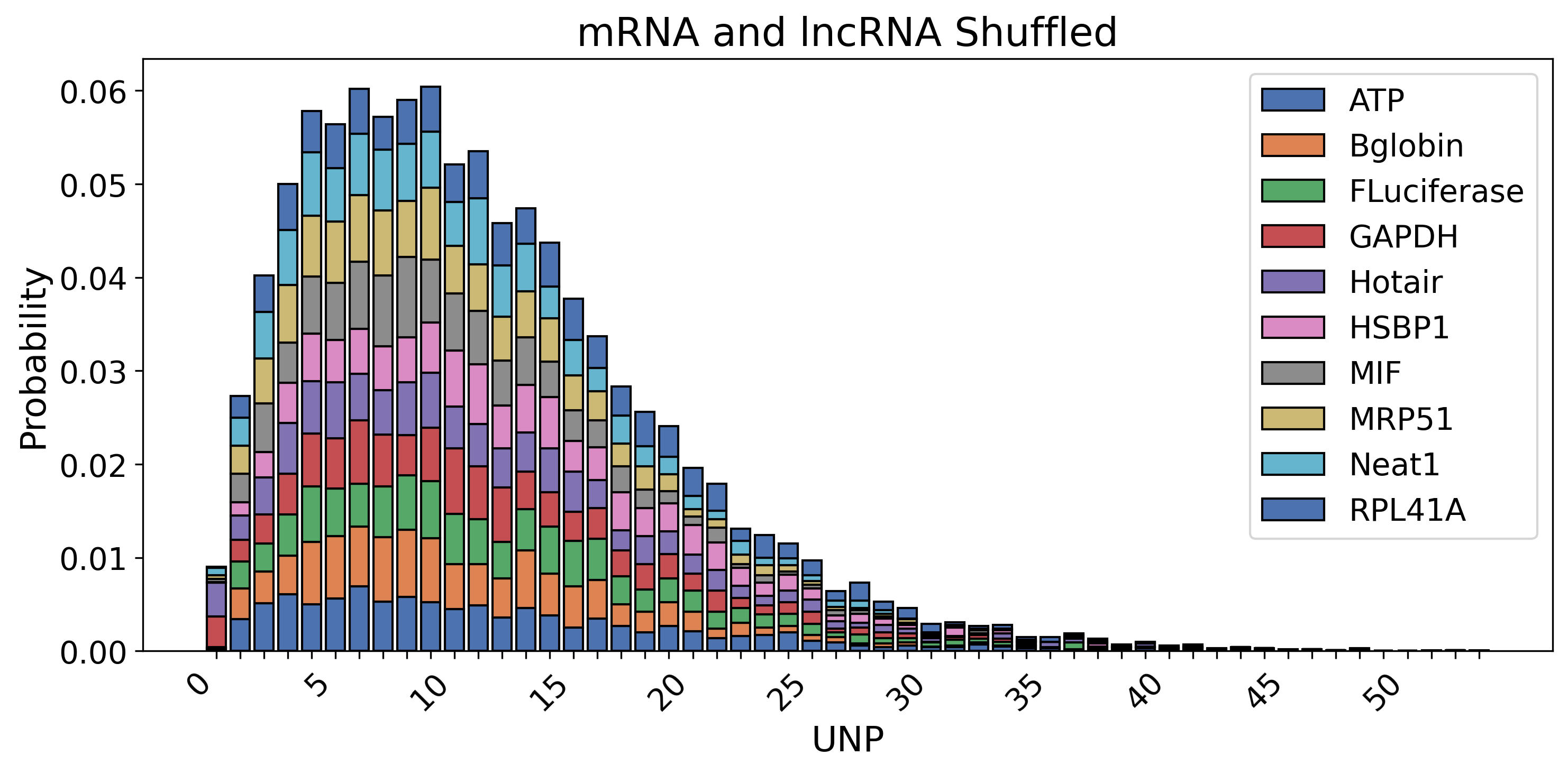}
 \end{center}

 \subsection{\COV}
 \begin{center}
     \includegraphics[width=0.48\textwidth]{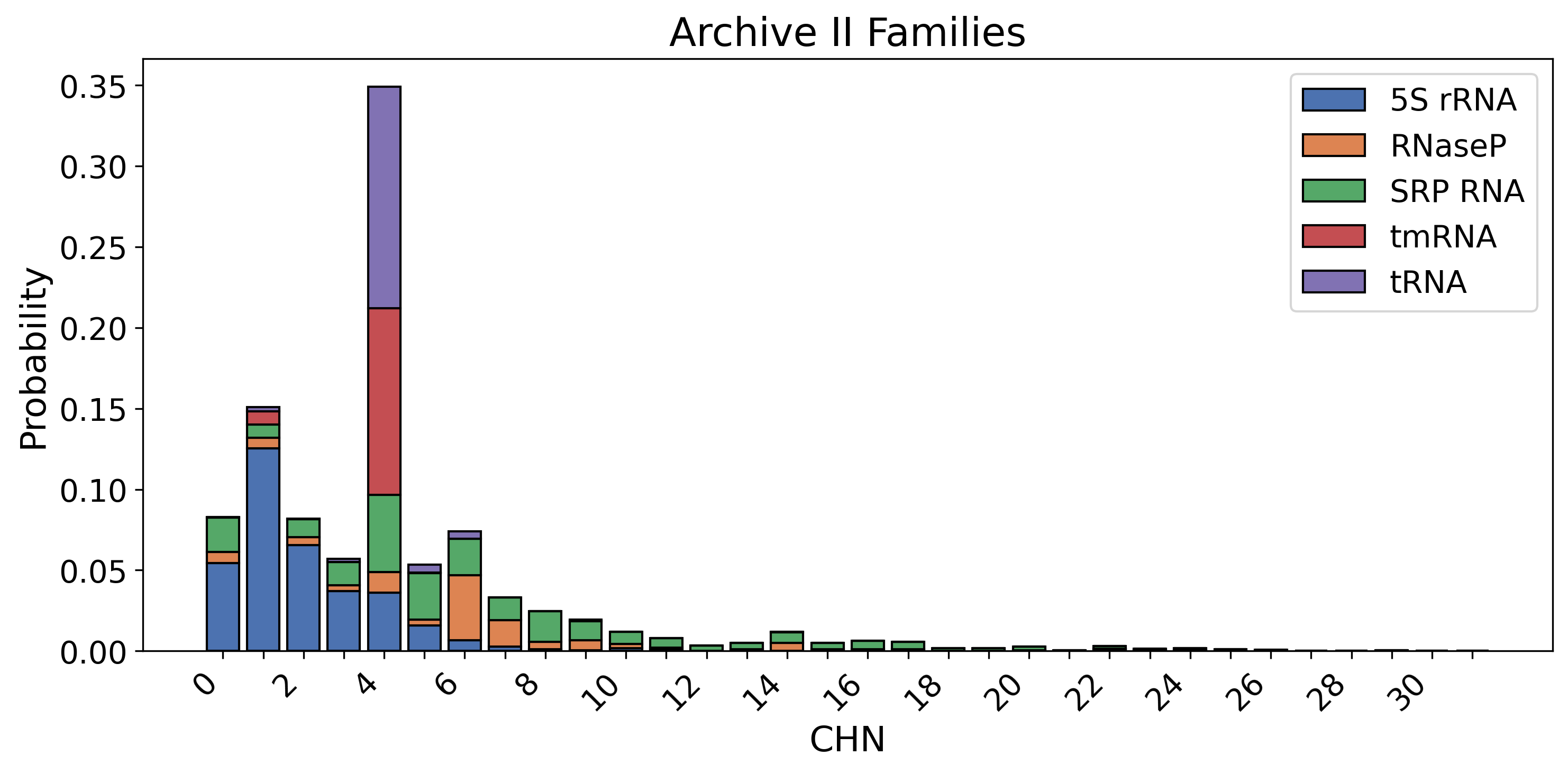}
     \includegraphics[width=0.48\textwidth]{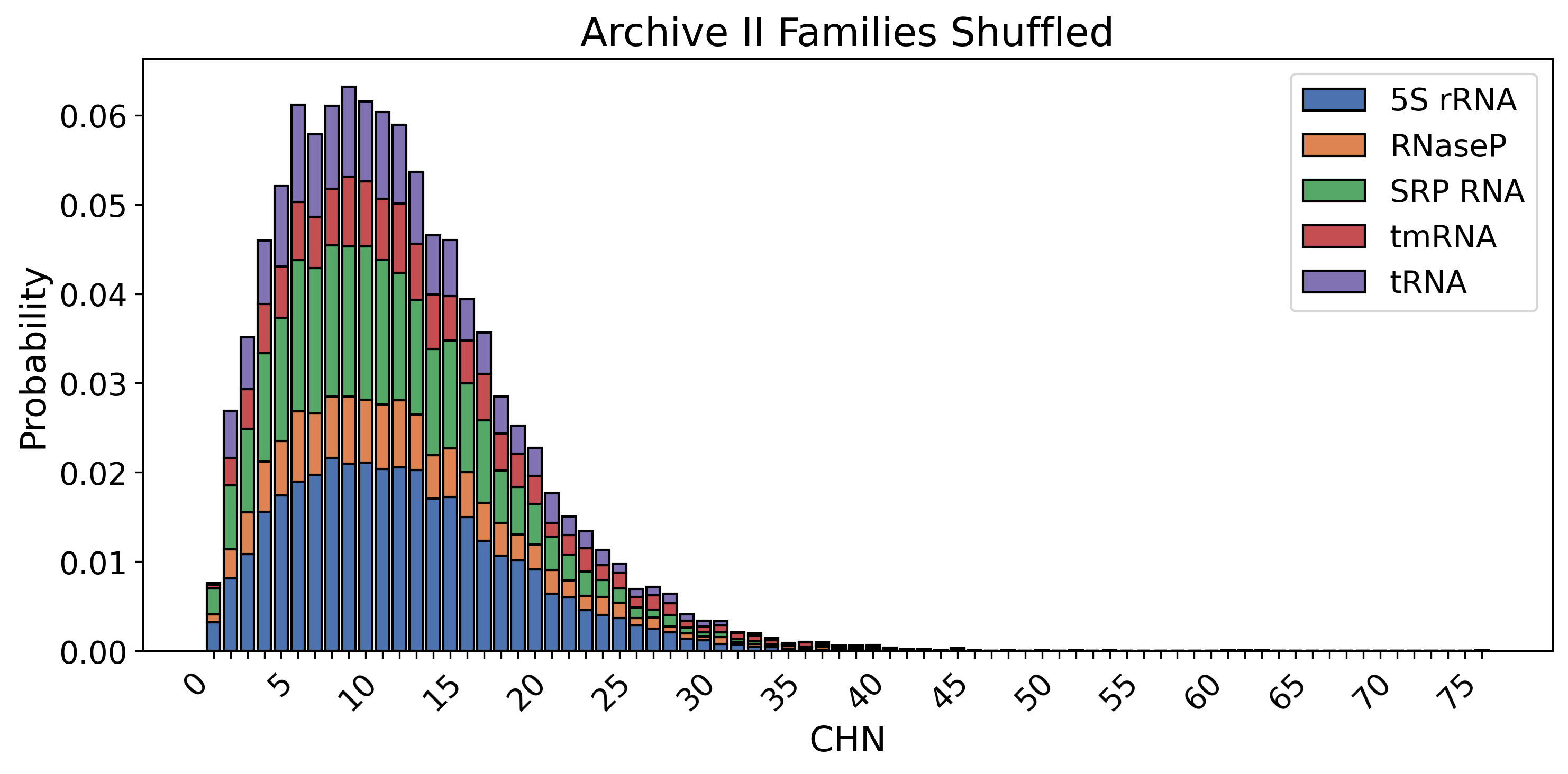}
     \includegraphics[width=0.48\textwidth]{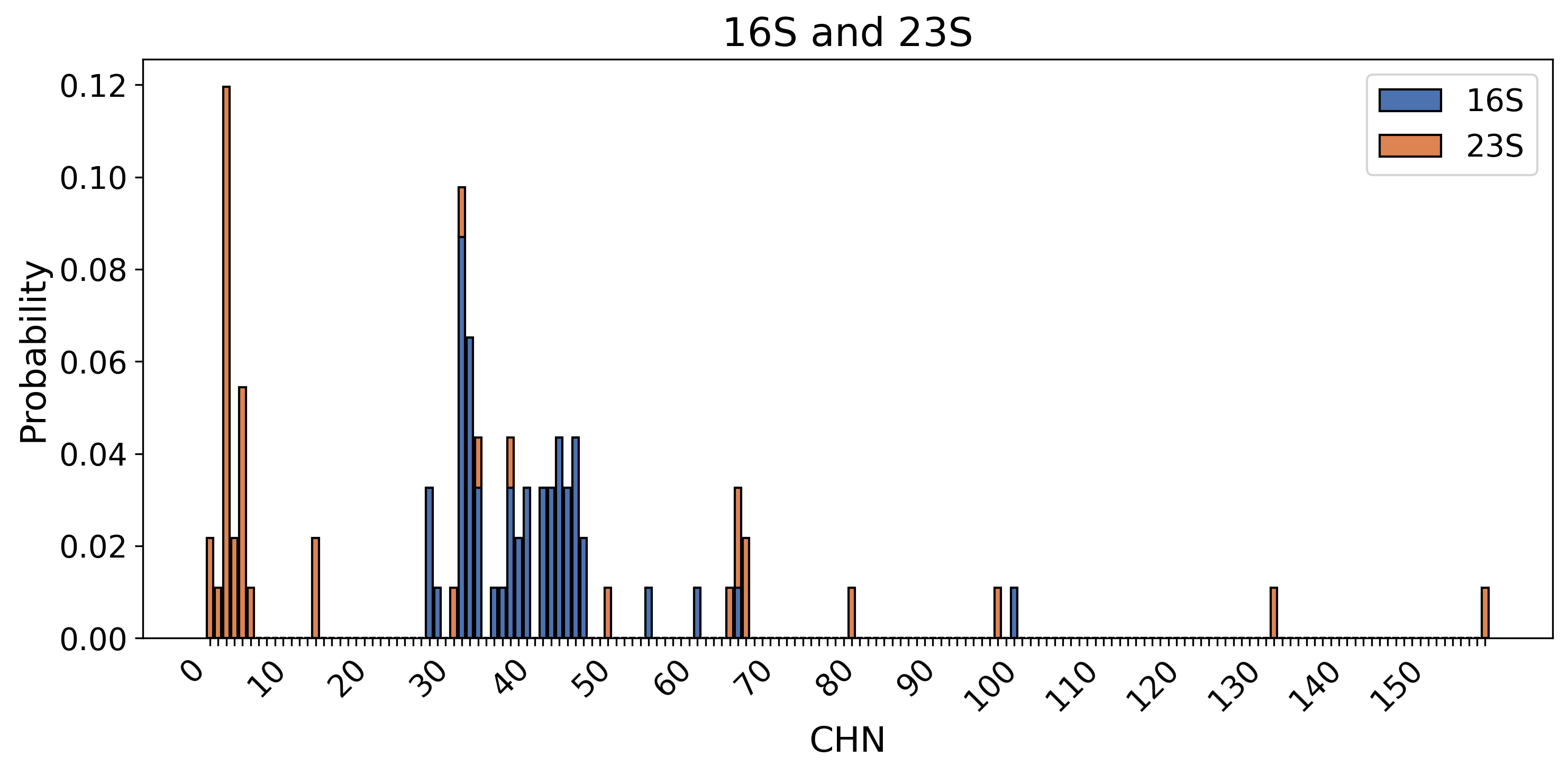}
     \includegraphics[width=0.48\textwidth]{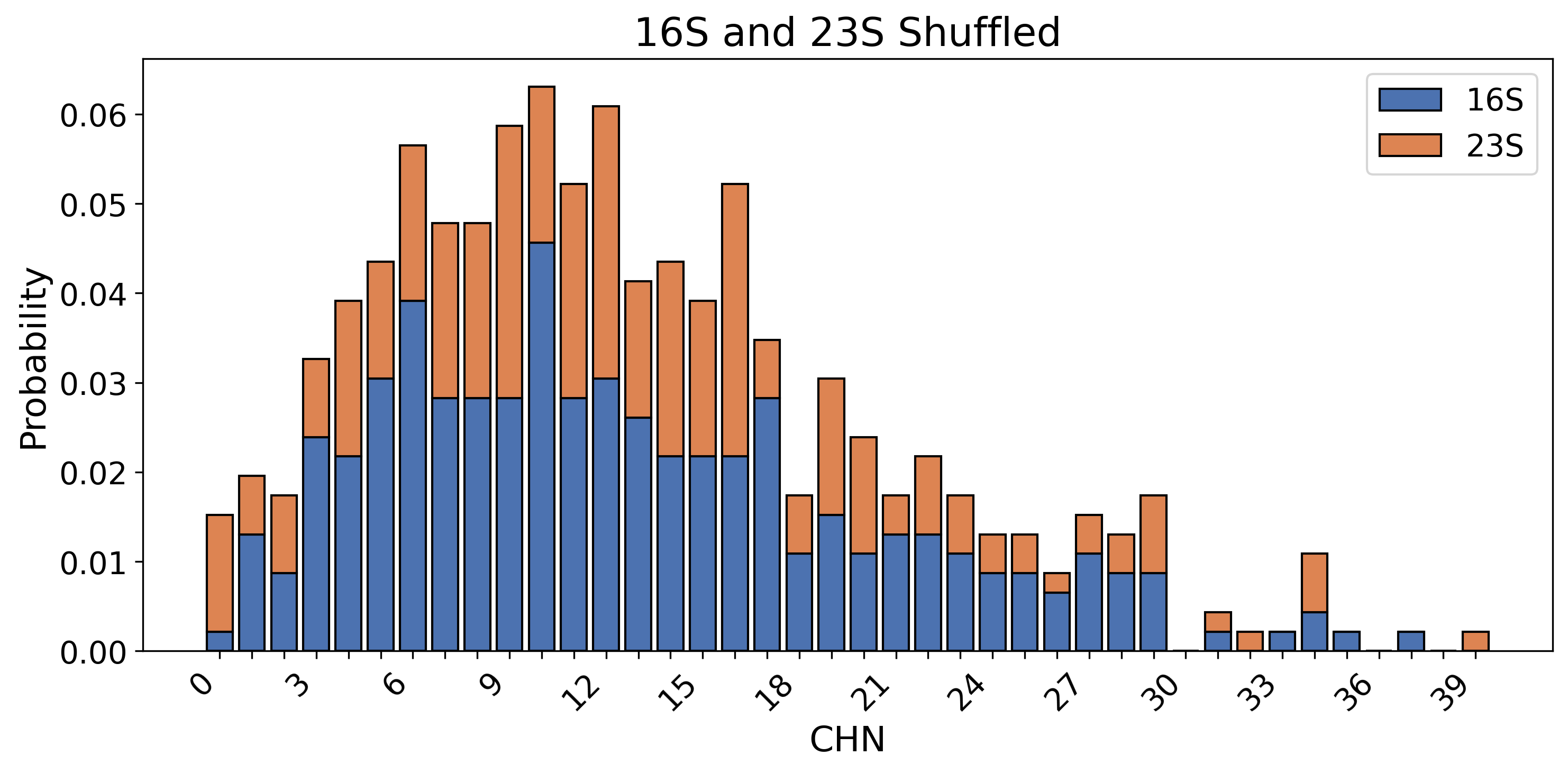}
     \includegraphics[width=0.48\textwidth]{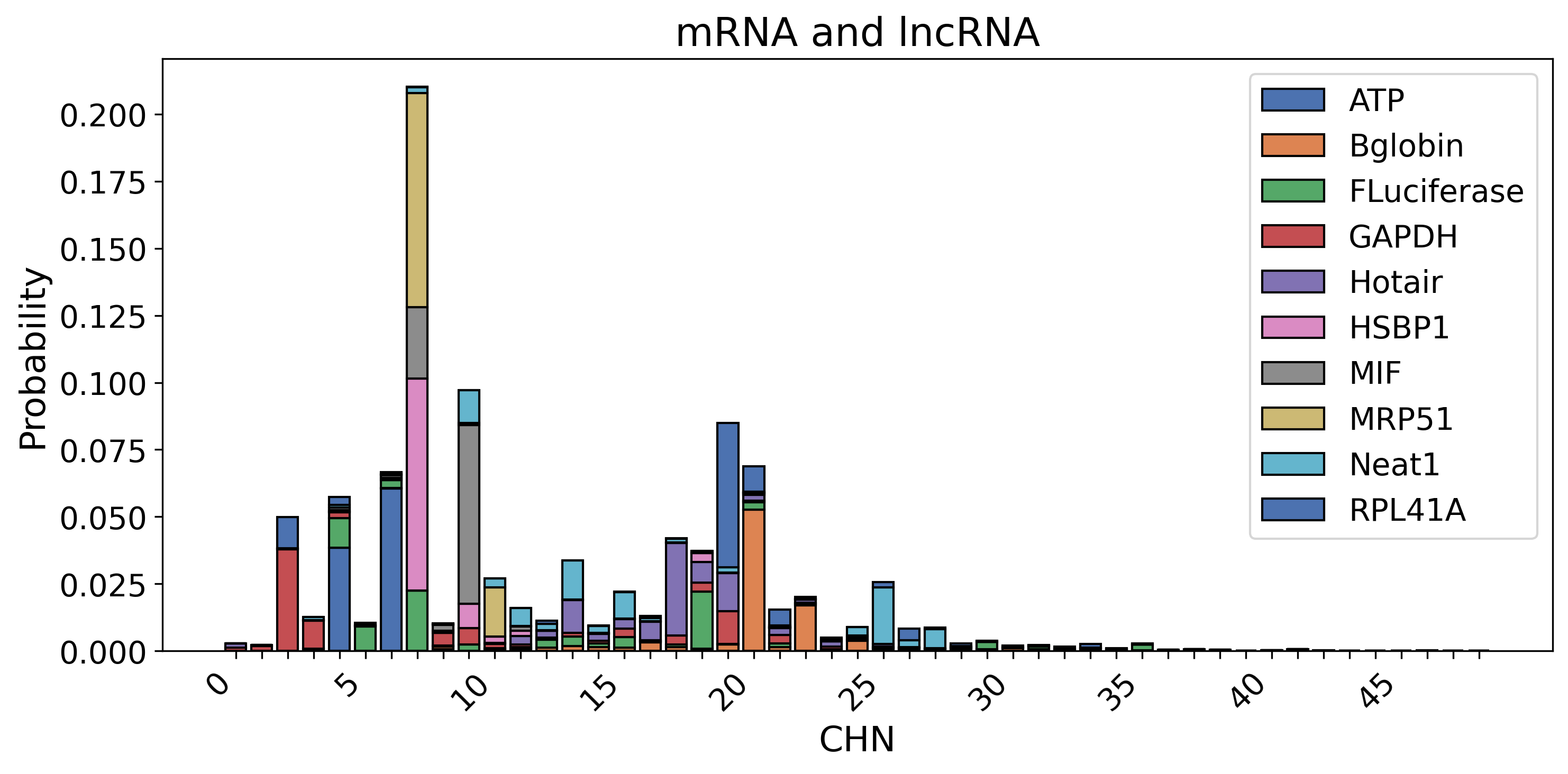}
     \includegraphics[width=0.48\textwidth]{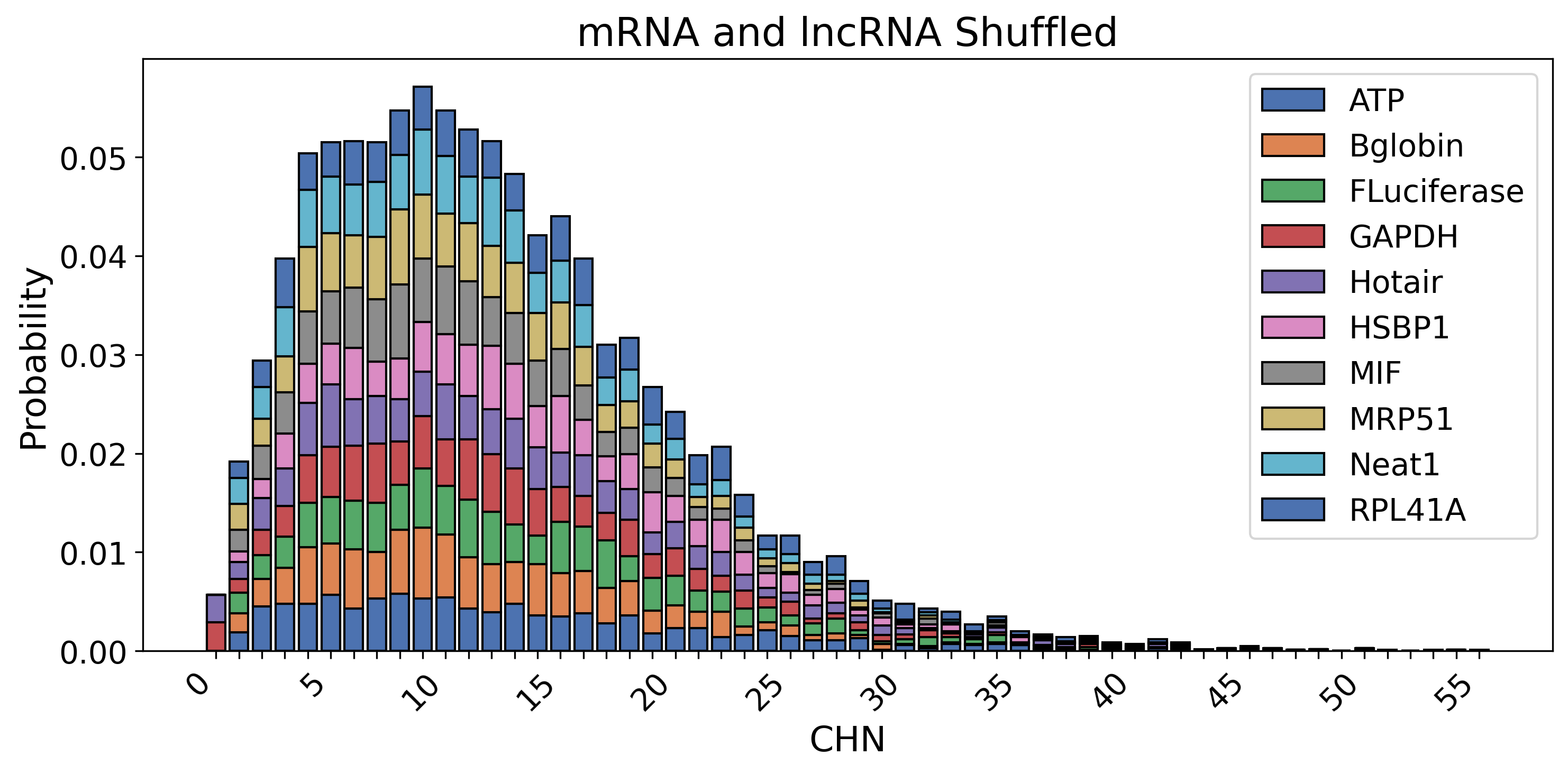}
 \end{center}

 \subsection{\ETE}
 \begin{center}
     \includegraphics[width=0.48\textwidth]{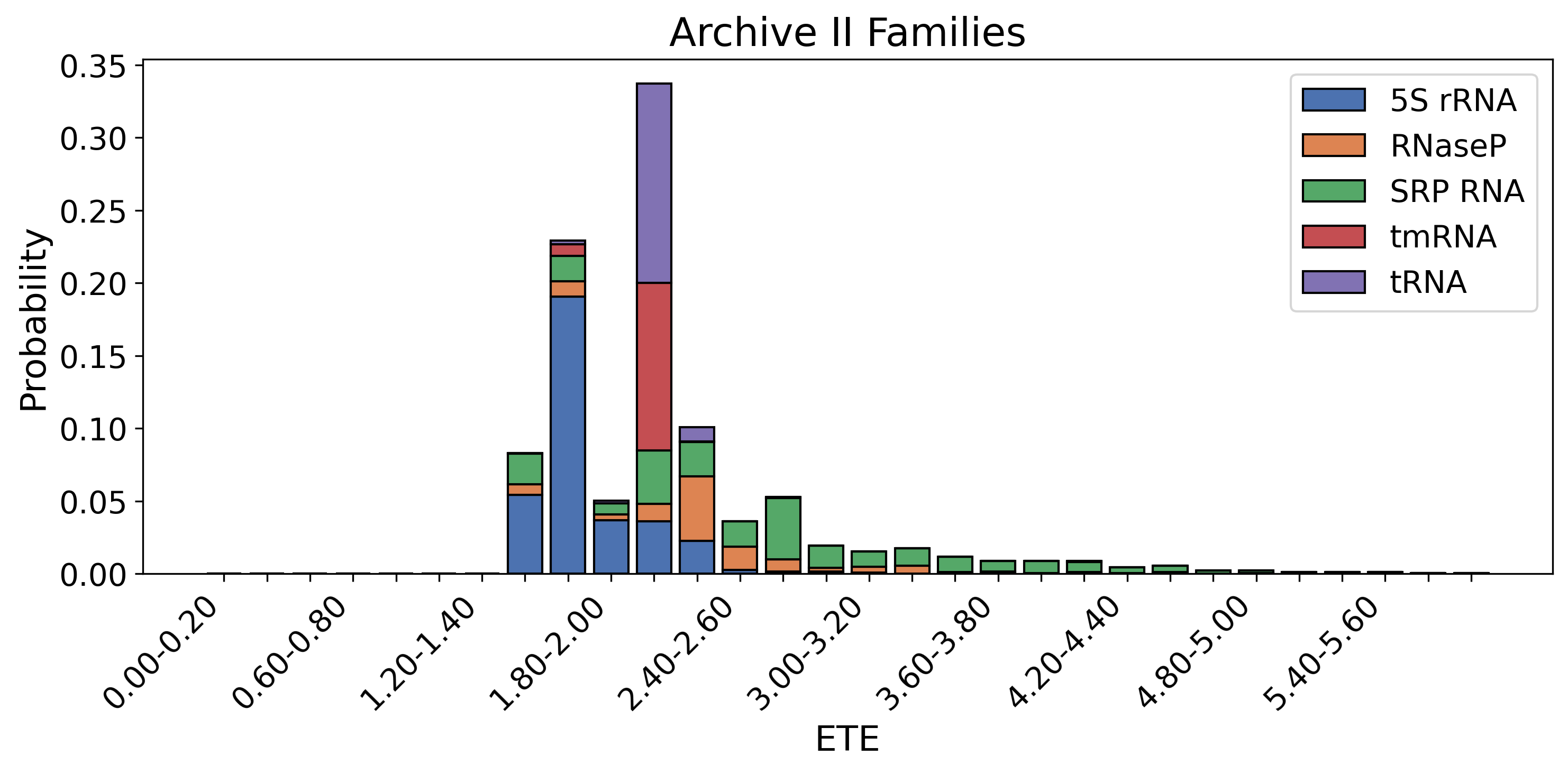}
     \includegraphics[width=0.48\textwidth]{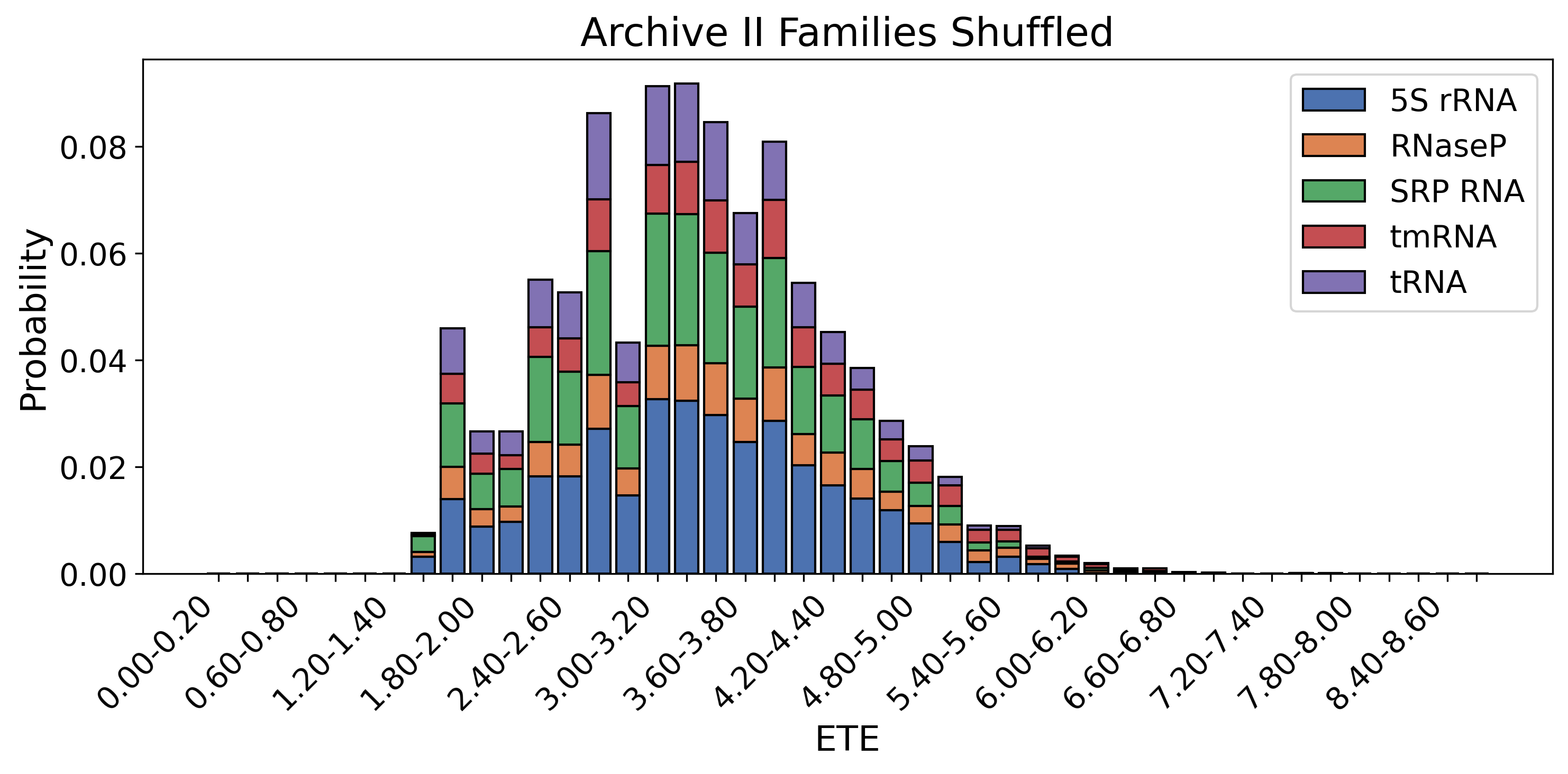}
     \includegraphics[width=0.48\textwidth]{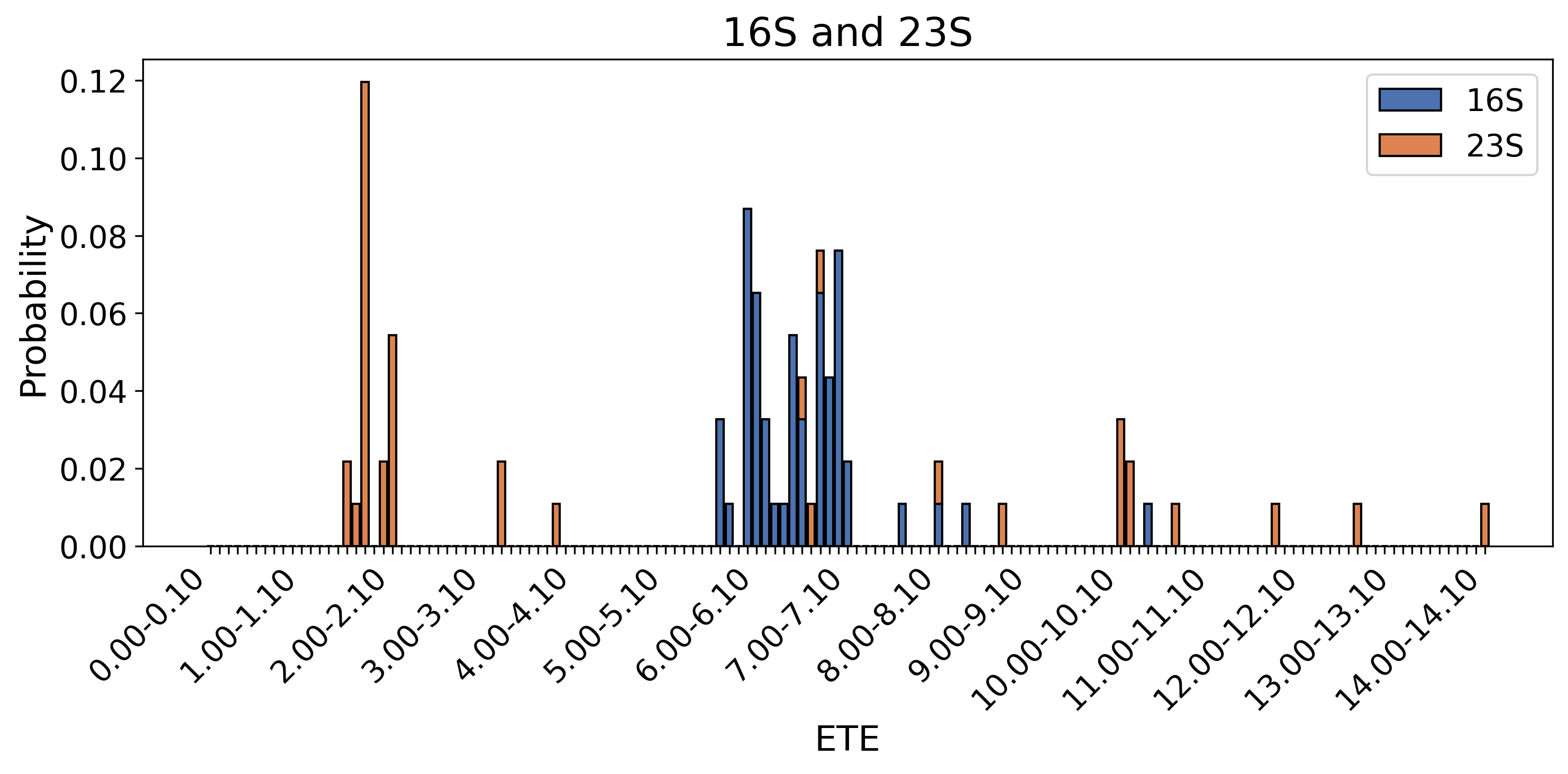}
     \includegraphics[width=0.48\textwidth]{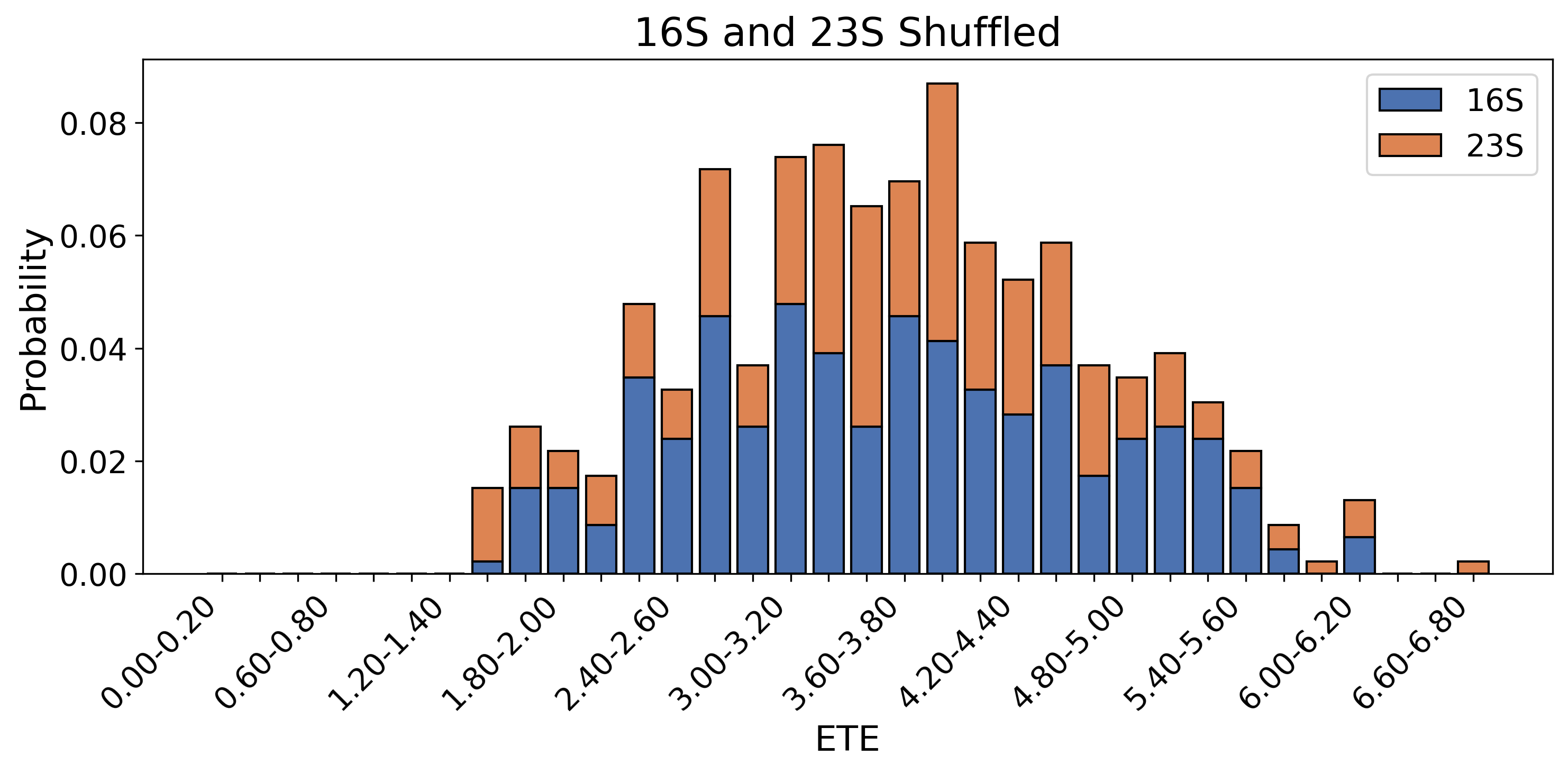}
     \includegraphics[width=0.48\textwidth]{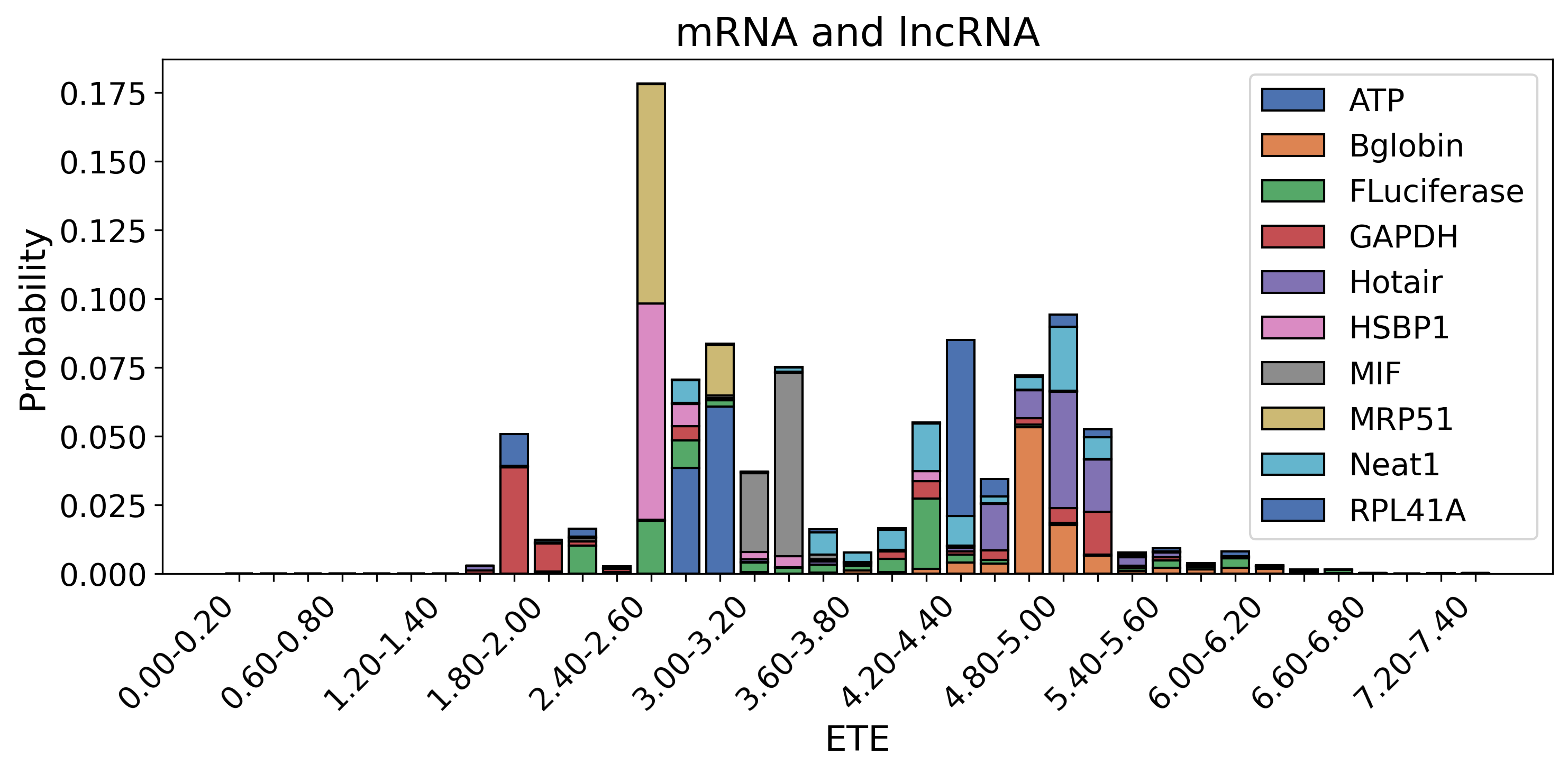}
     \includegraphics[width=0.48\textwidth]{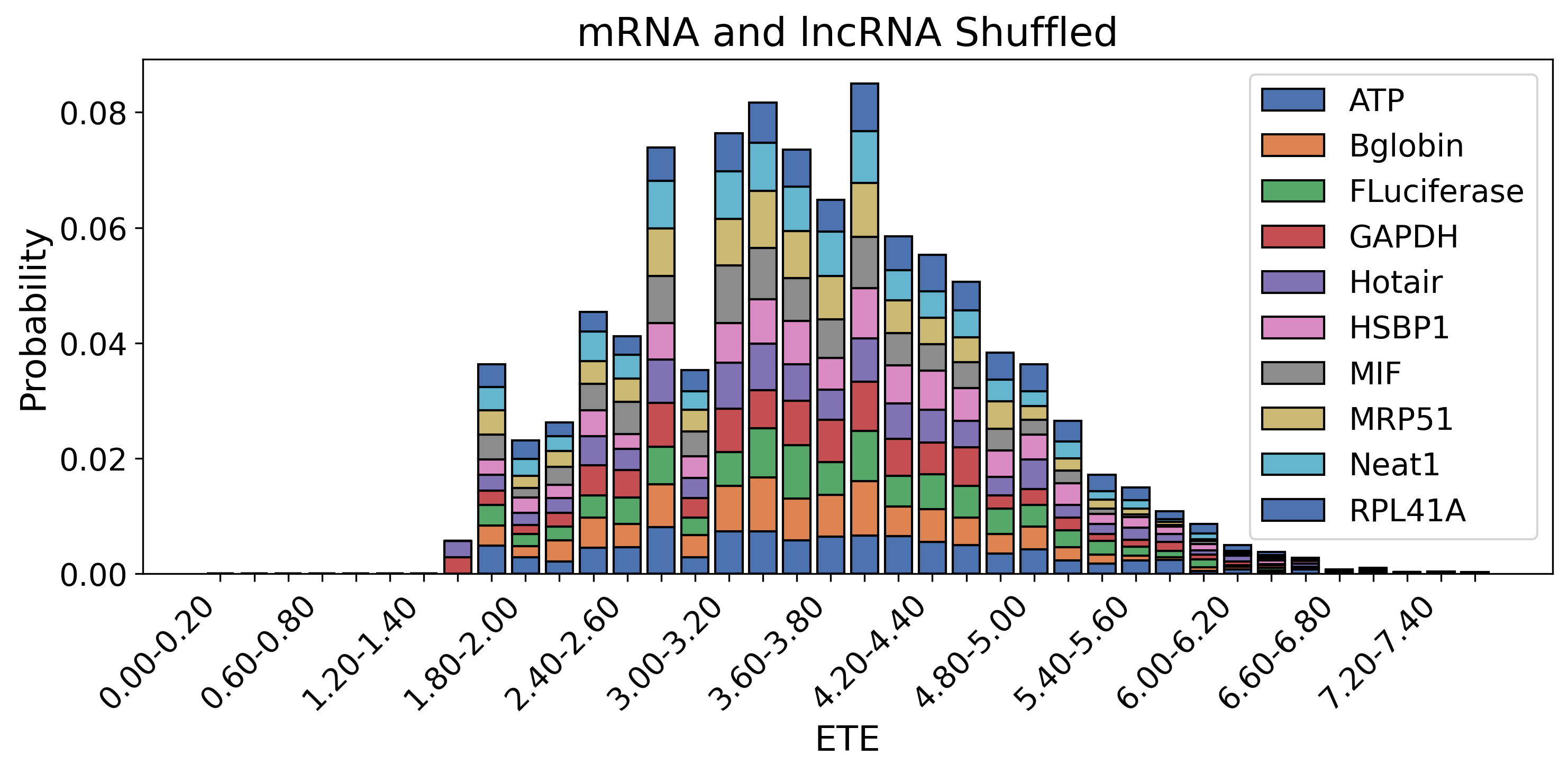}
 \end{center}

 \subsection{\HEL}
 \begin{center}
     \includegraphics[width=0.48\textwidth]{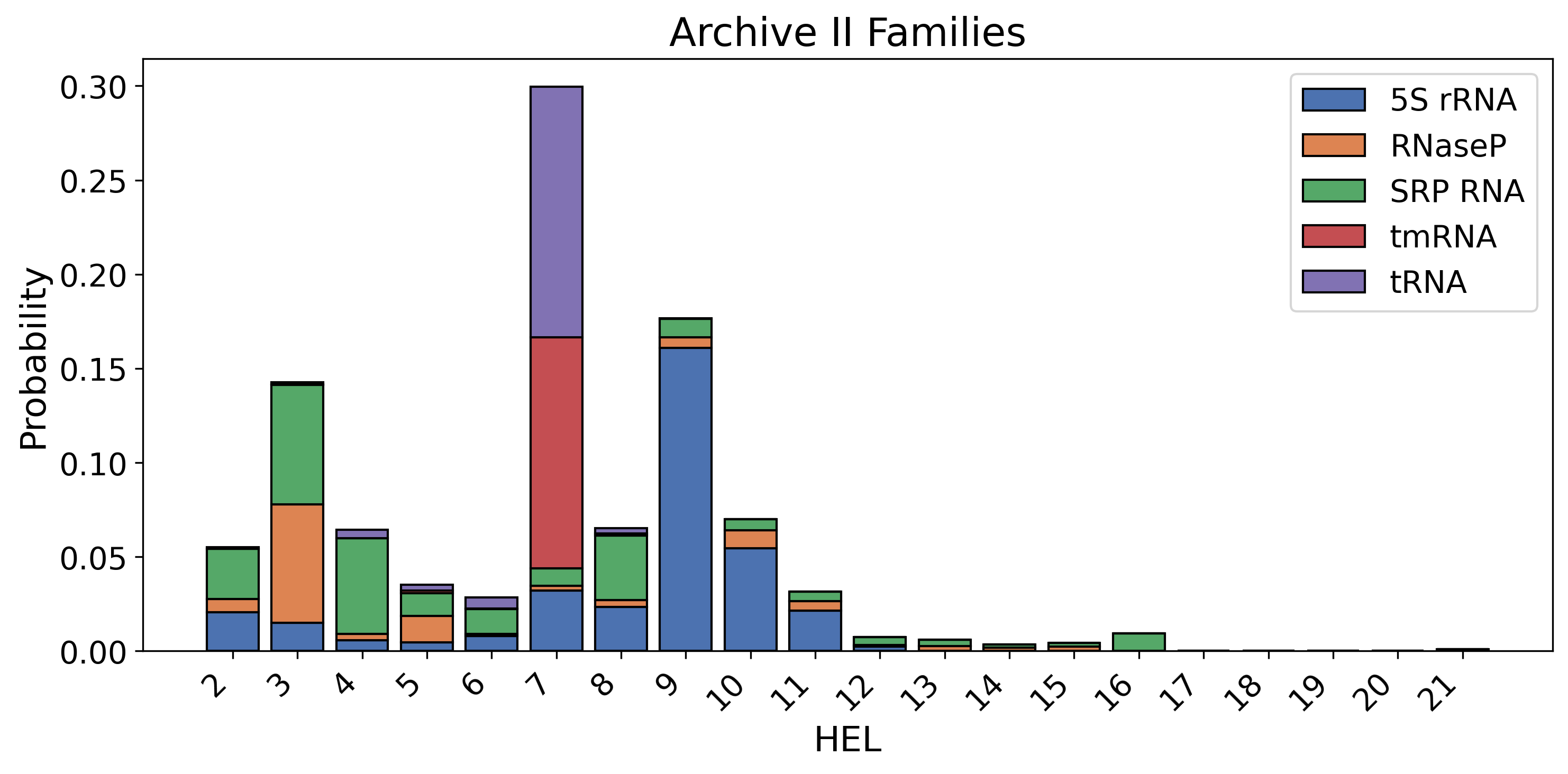}
     \includegraphics[width=0.48\textwidth]{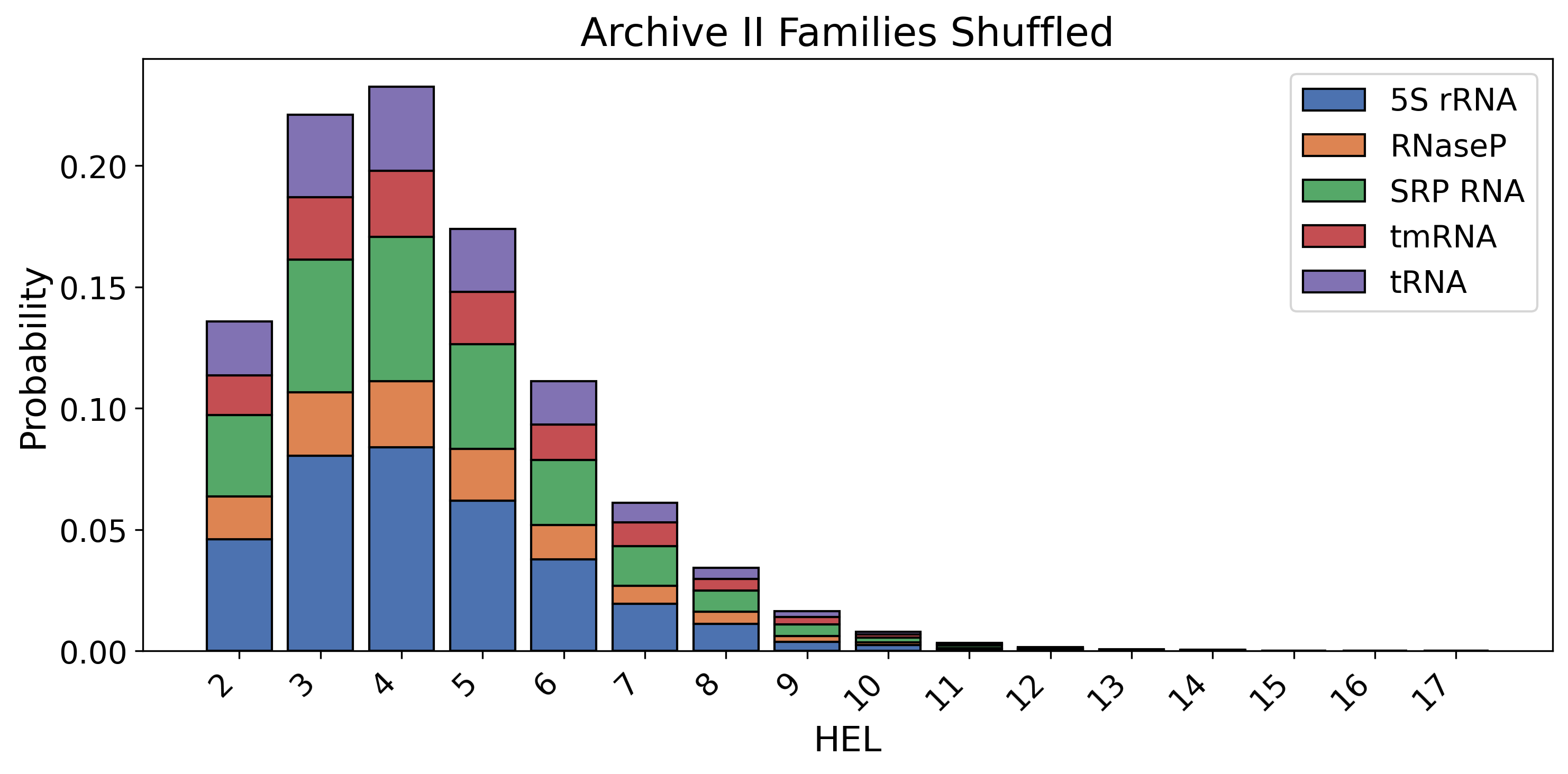}
     \includegraphics[width=0.48\textwidth]{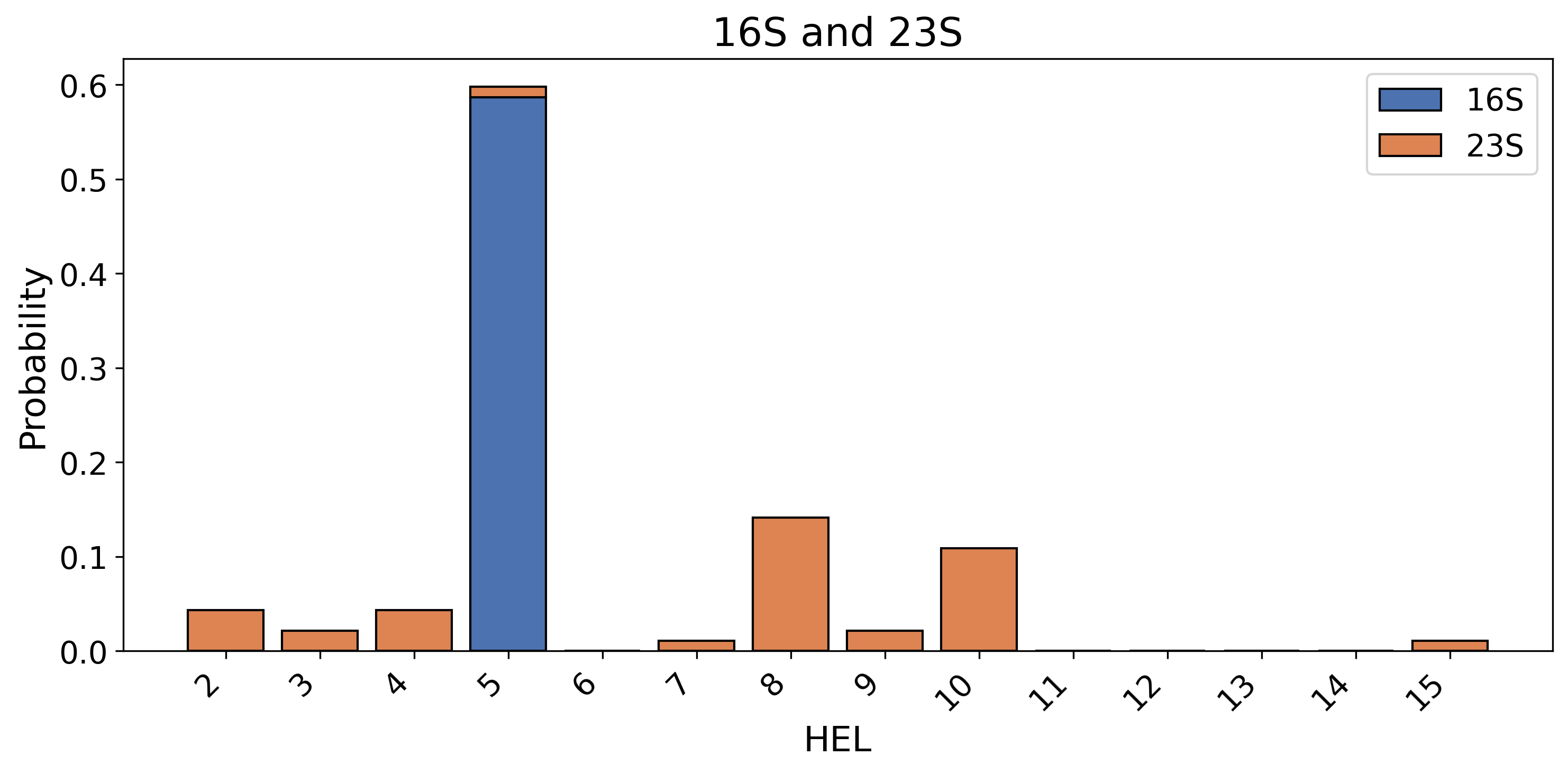}
     \includegraphics[width=0.48\textwidth]{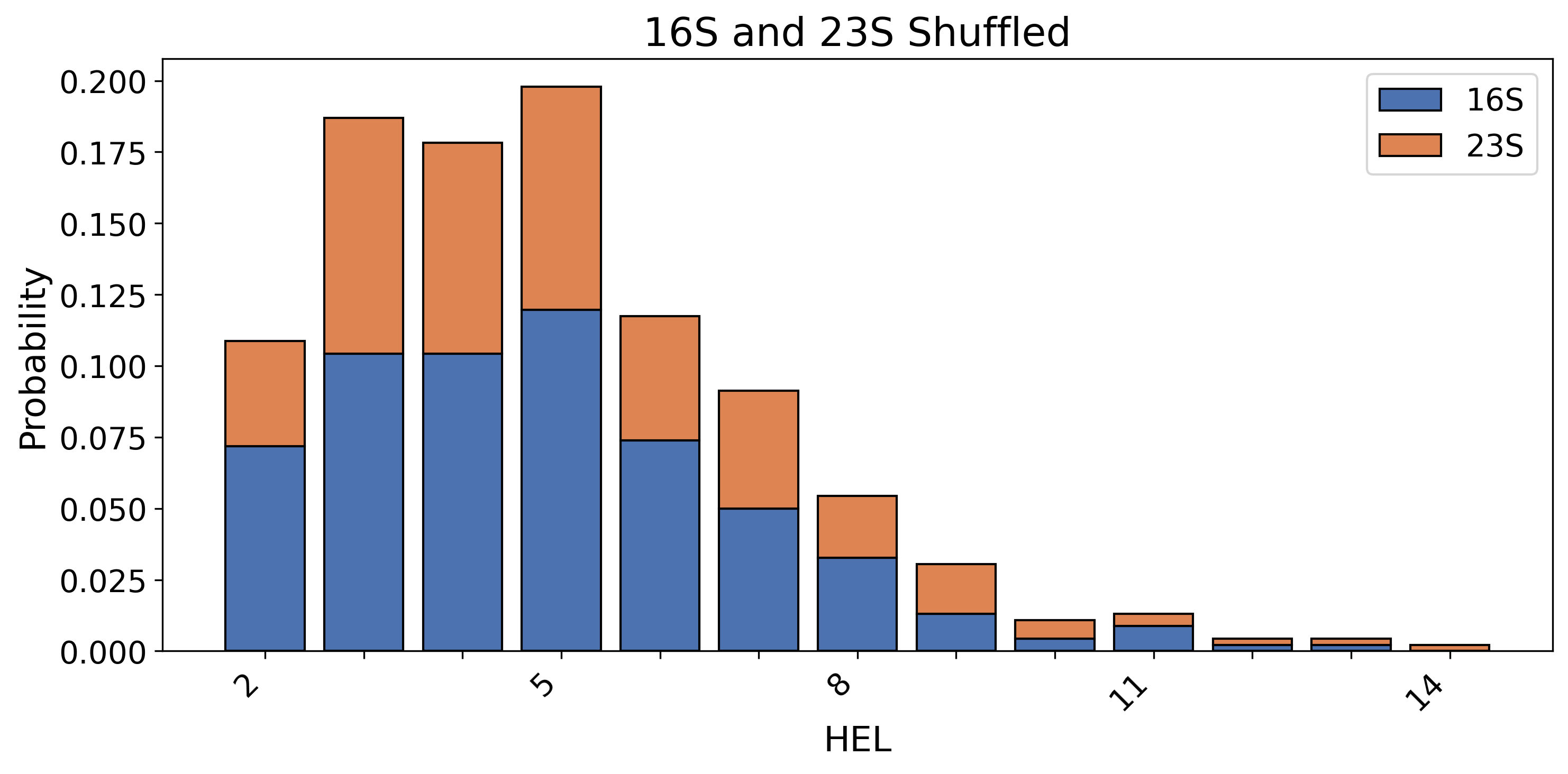}
     \includegraphics[width=0.48\textwidth]{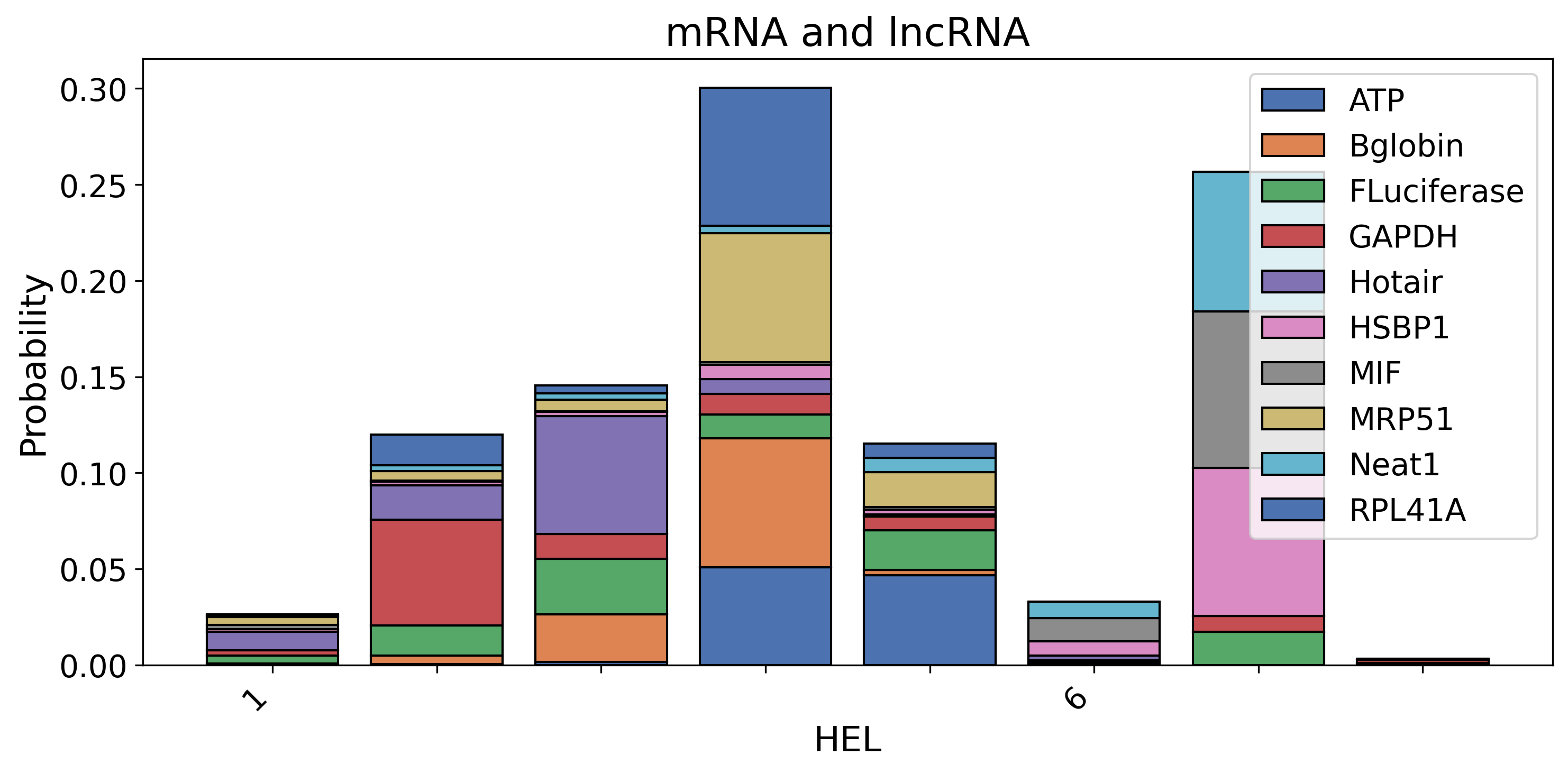}
     \includegraphics[width=0.48\textwidth]{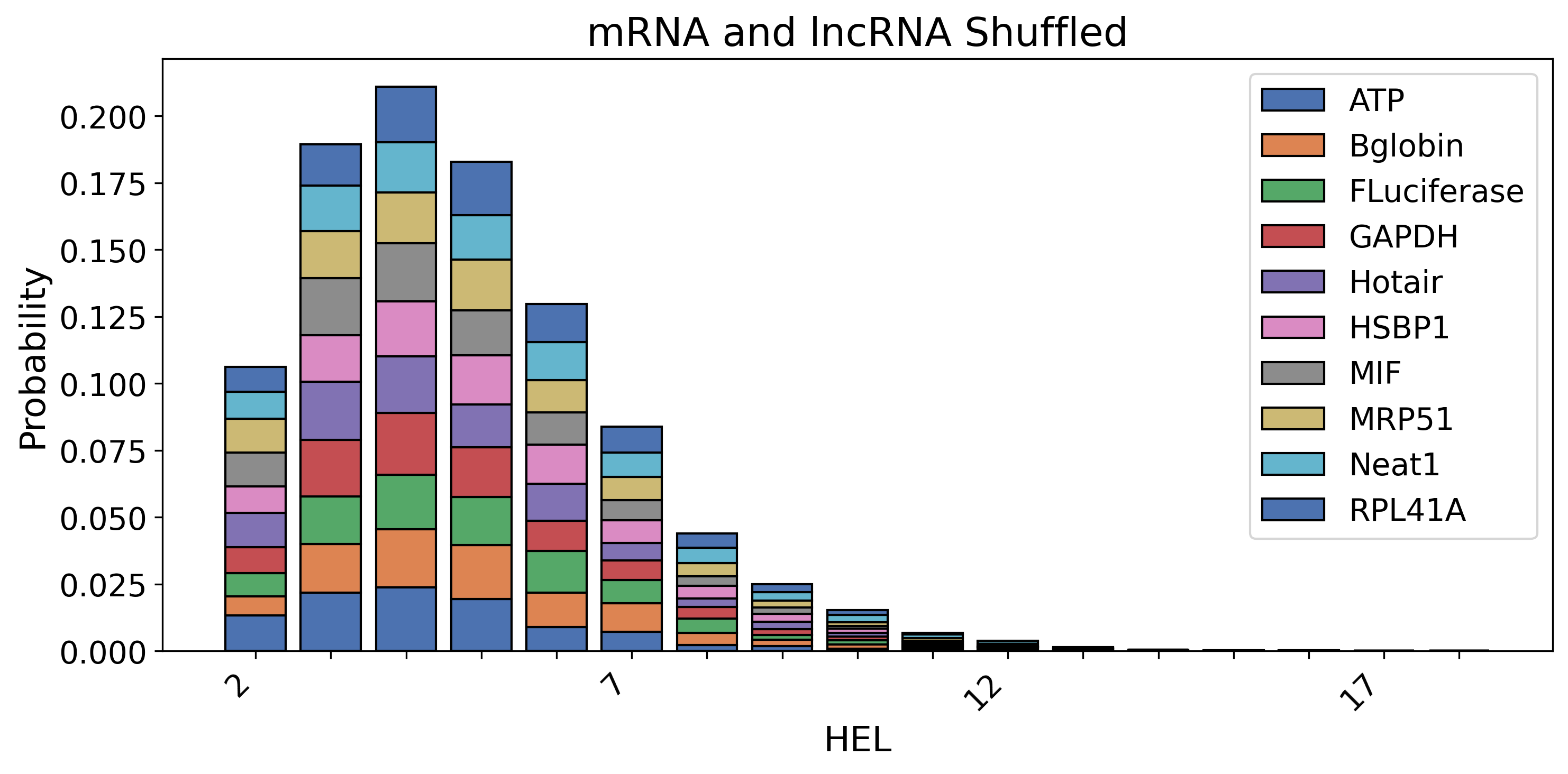}
 \end{center}

 \newpage

 \subsection{\STM}
 We did not assess the first stem length in the 16S and 23S sequences due to pseudoknots in the exterior loop.
 \begin{center}
     \includegraphics[width=0.48\textwidth]{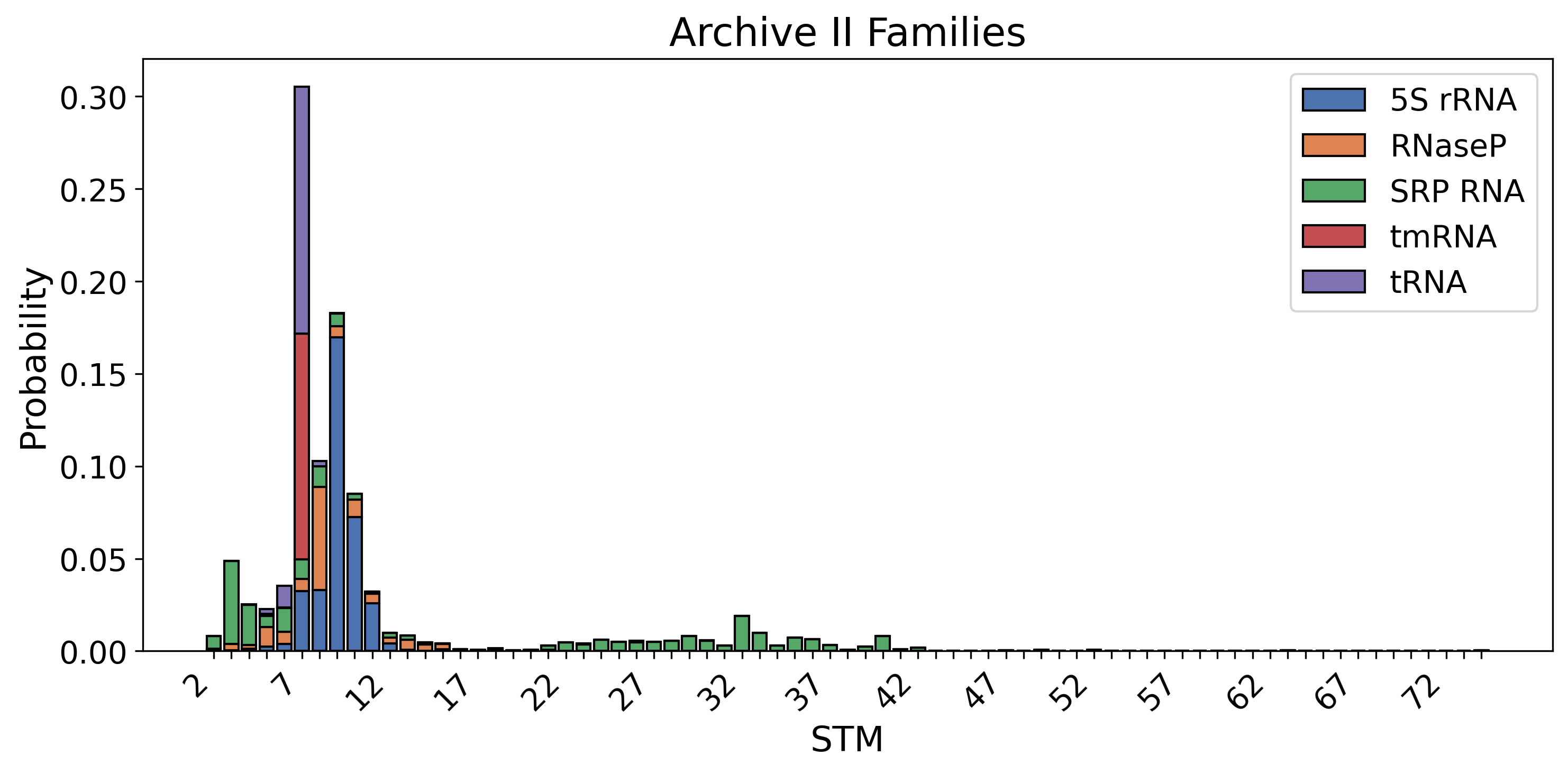}
     \includegraphics[width=0.48\textwidth]{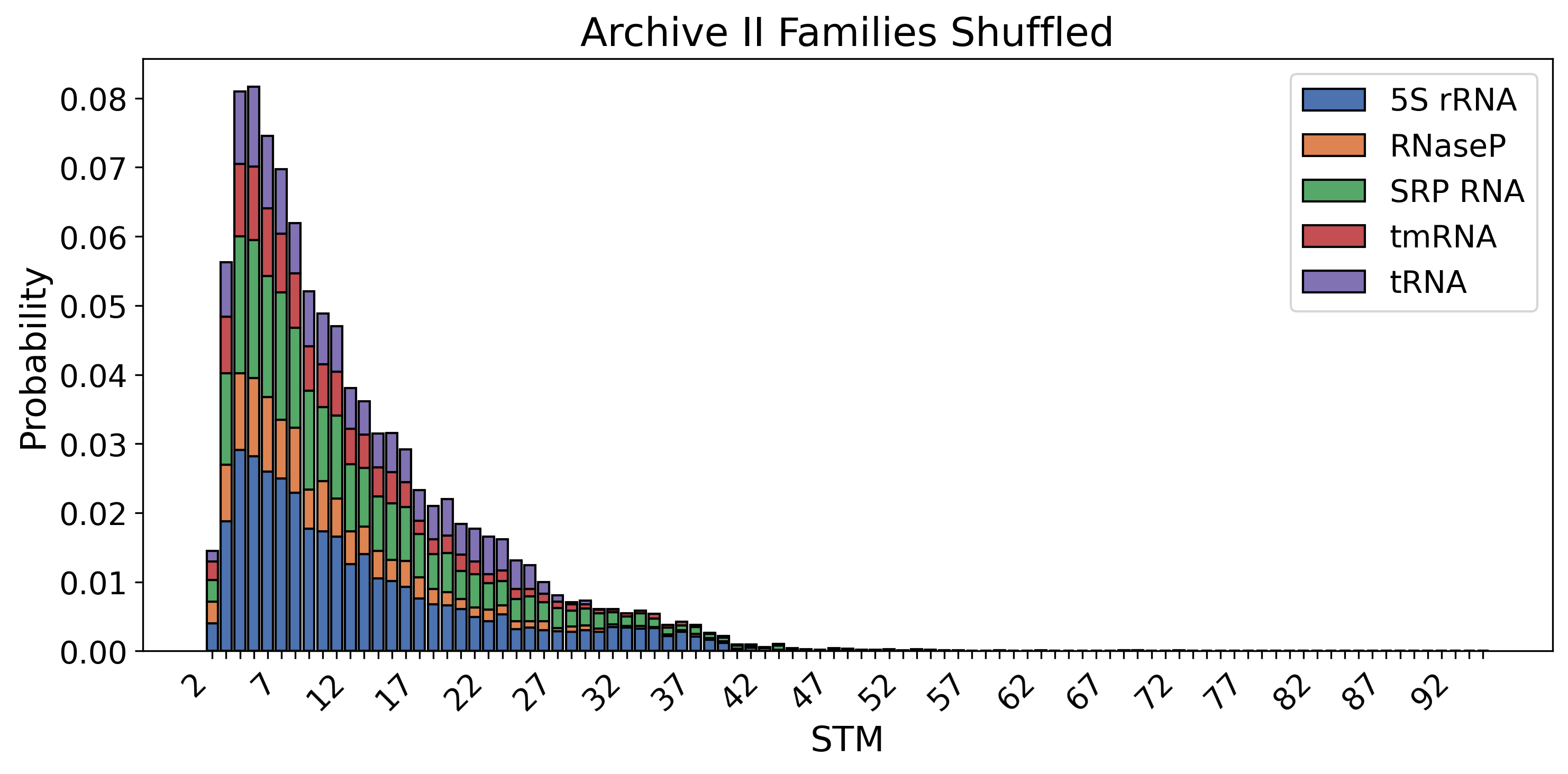}\\
     \hspace{.48\textwidth}
     \includegraphics[width=0.48\textwidth]{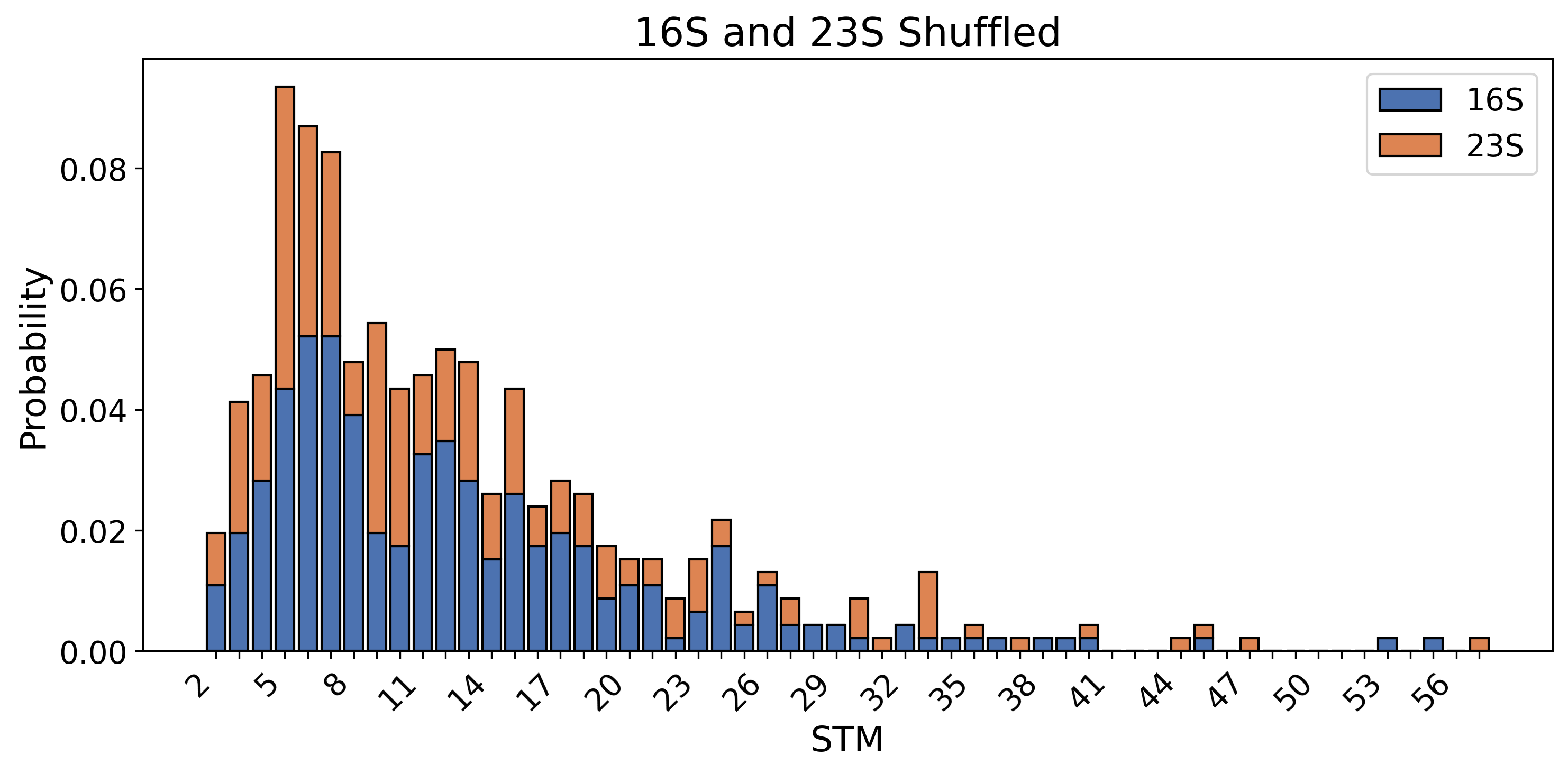}\\
     \includegraphics[width=0.48\textwidth]{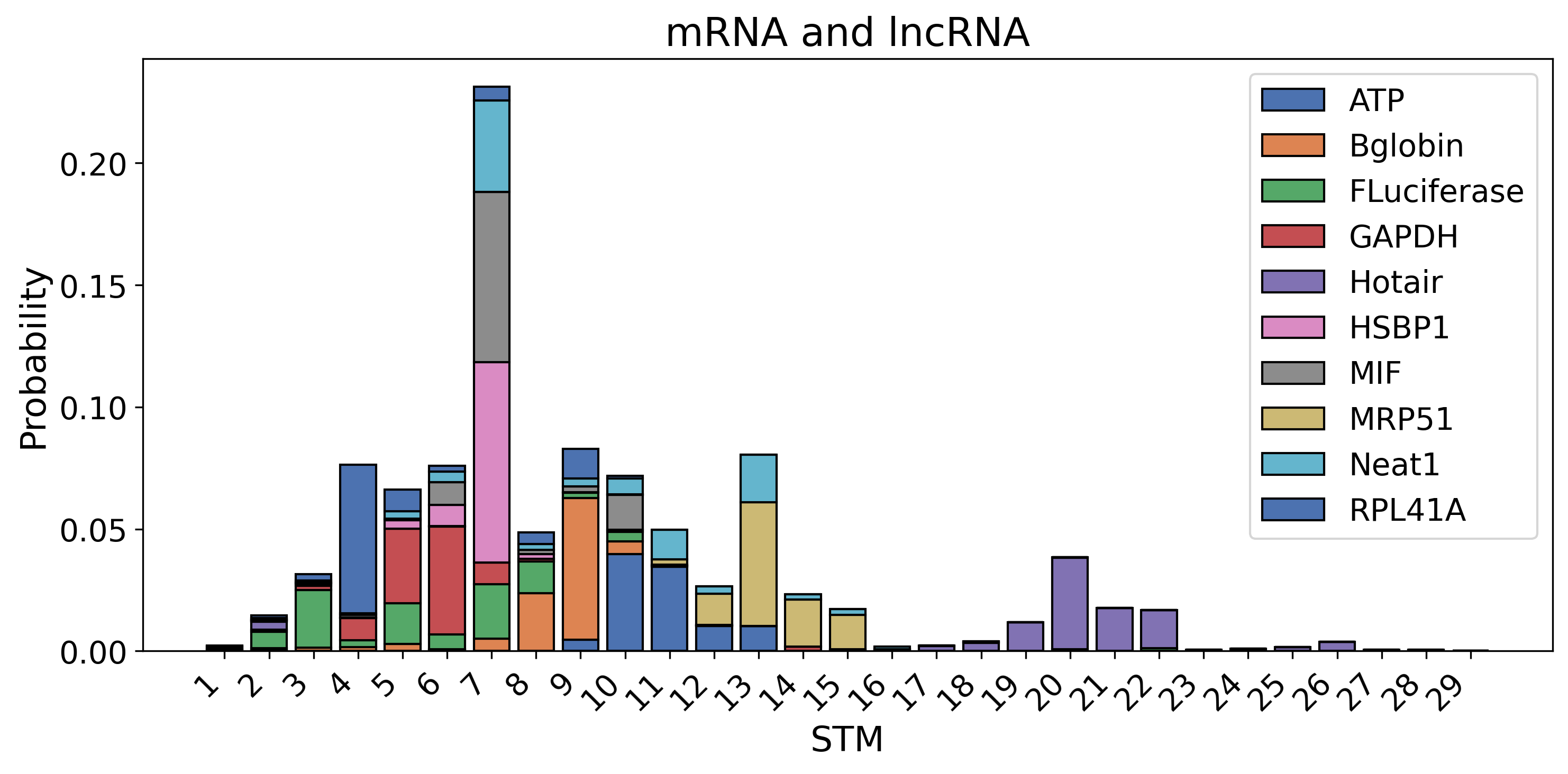}
     \includegraphics[width=0.48\textwidth]{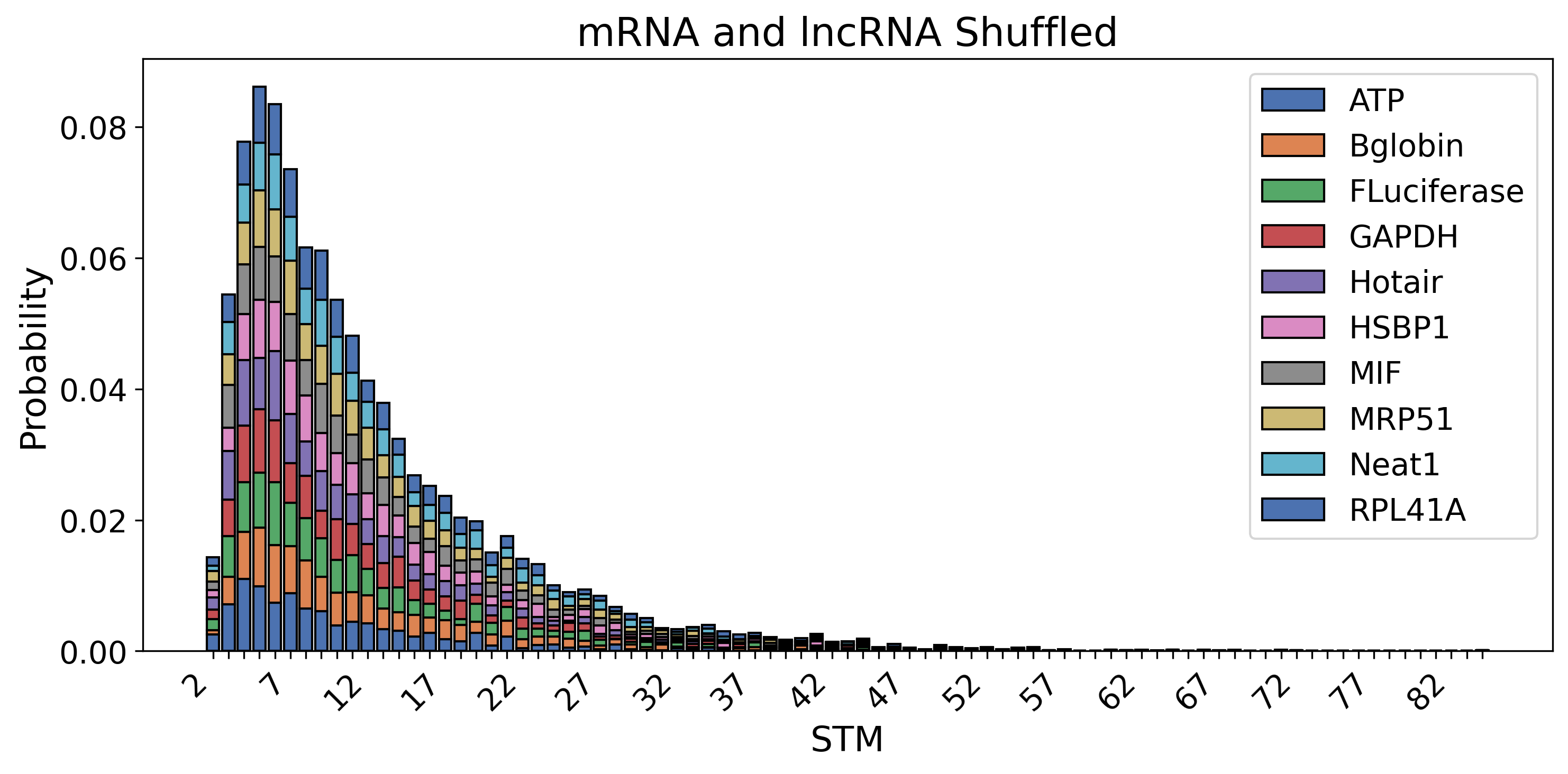}
 \end{center}

\end{appendices}

\newpage

\printbibliography

\end{document}